%%%%%%%%%%%%%%%%%%%%%%% file template.tex %%%%%%%%%%%%%%%%%%%%%%%%%
%
% This is a general template file for the LaTeX package SVJour3
% for Springer journals.          Springer Heidelberg 2010/09/16
%
% Copy it to a new file with a new name and use it as the basis
% for your article. Delete % signs as needed.
%
% This template includes a few options for different layouts and
% content for various journals. Please consult a previous issue of
% your journal as needed.
%
%%%%%%%%%%%%%%%%%%%%%%%%%%%%%%%%%%%%%%%%%%%%%%%%%%%%%%%%%%%%%%%%%%%
%
% First comes an example EPS file -- just ignore it and
% proceed on the \documentclass line
% your LaTeX will extract the file if required
% [arxiv_v2: filecontents  stripped, 188 chars]
\RequirePackage{fix-cm}
\documentclass[smallextended]{svjour3}       % onecolumn (second format)
\smartqed  % flush right qed marks, e.g. at end of proof
\usepackage{latexsym}
\usepackage{amsmath,amsfonts,amssymb}
\usepackage{amsopn}
\usepackage{graphicx}
\usepackage{epstopdf}
\usepackage{hyperref}
\usepackage{mathtools}
\usepackage{caption}
\usepackage{subcaption}
\usepackage{diagbox}

\RequirePackage{booktabs}
\RequirePackage{dsfont}
\RequirePackage{microtype}

\usepackage[algo2e,ruled,linesnumbered]{algorithm2e}
\SetKw{KwTerminate}{terminate}
\SetKw{KwInofupalg}{Input:}
\SetKw{Kwgiven}{Fixed:}

\DeclareMathOperator{\divg}{div}

\newcommand{\Frechet}{\text{Fr{\'e}chet }}
\newcommand{\Gateaux}{\text{G{\^a}teaux }}
\newcommand{\DOF}{\mathrm{DOF}}
\newcommand{\gmres}{\textsc{gmres }}

\newcommand{\fin}{\mathrm{final}}
\newcommand{\calA}{ \ensuremath{{\cal{A}}} }
\newcommand{\calE}{ \ensuremath{{\cal{E}}} }
\newcommand{\calP}{ \ensuremath{{\cal{P}}} }

\newcommand{\N}{\mathds{N}}
\newcommand{\R}{\mathds{R}}
\newcommand{\Rpos}{\ensuremath{\R_{>0}}}

\newcommand{\hatU}{\ensuremath{ \Hone }}
\newcommand{\VhO}{\ensuremath { Y_h } }
\newcommand{\VhOast}{\ensuremath { Y_h^\ast } }
\newcommand{\Vh}{\ensuremath { V_h } }
\newcommand{\VO}{\ensuremath { Y } }
\newcommand{\BV}{\ensuremath {\text{BV}(\Omega)} }
\newcommand{\M}{\ensuremath {{\cal M}(\Omega)} }
\newcommand{\LLL}{\ensuremath {L^2(\Omega)} }  
\newcommand{\Linf}{\ensuremath {L^\infty(\Omega)} }
\newcommand{\Hone}{\ensuremath {H^1(\Omega)} }
\newcommand{\Lin}{\ensuremath {{\cal L}} }
\newcommand{\Ball}{\ensuremath { \mathds{B} } }
\newcommand{\Y}{\VO}

\newcommand{\st}{\ensuremath{\text{s.t.}}}
\newcommand{\Ci}{\ensuremath{ \operatorname{Ci} } }

\newcommand{\dx}{\ensuremath {\,\mathrm{d}x}}
\newcommand{\dtau}{\ensuremath {\,\mathrm{d}\tau}}

\newcommand{\dt}{\ensuremath {\,\mathrm{d}t}}
\newcommand{\dBVr}{\ensuremath { d_{\operatorname{BV},r} }}

\newcommand{\Norm}[2][]{\ensuremath{\left\|#2\right\|_{#1}}}
\newcommand{\norm}[2][]{\ensuremath{\lVert #2\rVert_{#1}}}

\newcommand{\param}{\ensuremath {{\gamma,\delta}} }
\newcommand{\paramh}{\ensuremath {{\gamma,\delta,h}} }
\newcommand{\paramn}{\ensuremath {{\gamma,\delta,{h_n}}} }
\newcommand{\paramnhat}{\ensuremath {{\gamma,\delta,h_{\hat n}}} }

\newcommand{\parambr}{\ensuremath {{(\gamma,\delta)}} }
\newcommand{\paramk}{\ensuremath {{\gamma_k,\delta_k}} }
\newcommand{\parambrk}{\ensuremath {{(\gamma_k,\delta_k)}} }

\newtheorem{assumption}[theorem]{Assumption}
%\newtheorem{assumptionst}[theorem]{Standing assumption}
%\newtheorem{remark}[theorem]{remark}
%\newtheorem{example}[theorem]{example}
%\newtheorem{Corollary}[theorem]{Corollary}

%% Optional PDF information
%\ifpdf
%\hypersetup{
%	pdftitle={\title},
%	pdfauthor={\author}
%}
%\fi
%
% \usepackage{mathptmx}      % use Times fonts if available on your TeX system
%
% insert here the call for the packages your document requires
%\usepackage{latexsym}
% etc.
%
% please place your own definitions here and don't use \def but
% \newcommand{}{}
%
% Insert the name of "your journal" with
% \journalname{myjournal}
%
\begin{document}

\title{A path-following inexact Newton method for PDE-constrained optimal control in BV (extended version)
	\thanks{\textit{This is an extended version of the corresponding journal article \cite{HafeMan_reg}. It contains some proofs that are omitted in the journal's  version.}}
}
%\subtitle{Do you have a subtitle?\\ If so, write it here}

\titlerunning{A path-following inexact Newton method for optimal control in BV}        % if too long for running head

\author{D. Hafemeyer \and
        F. Mannel}

%ORCID: Mannel: https://orcid.org/0000-0001-9042-0428
%ORCID: Hafemeyer: https://orcid.org/0000-0002-9161-4811

%\authorrunning{Short form of author list} % if too long for running head

\institute{Dominik Hafemeyer \at
			TU M\"unchen\\
			Lehrstuhl f\"ur Optimalsteuerung, Department of Mathematics\\
			Boltzmannstr.~3, 85748 Garching b. M\"unchen, Germany\\
			\email{dominik.hafemeyer@ma.tum.de}
	\and
	Florian Mannel \at
	University of Graz\\
 	Institute of Mathematics and Scientific Computing\\
	Heinrichstraße 36, 8010 Graz, Austria\\
	\email{florian.mannel@uni-graz.at}
}
	
\date{Received: date / Accepted: date}
% The correct dates will be entered by the editor

\maketitle

\begin{abstract}
We study a PDE-constrained optimal control problem that involves functions of bounded variation as controls and includes the TV seminorm of the control in the objective.
We apply a path-following inexact Newton method to the problems that arise from smoothing the TV seminorm and adding an $H^1$ regularization.
We prove in an infinite-dimensional setting that, first, the solutions of these auxiliary problems converge to the solution of the original problem and, second, that an inexact Newton method enjoys fast local convergence when applied to a reformulation of the auxiliary optimality systems in which the control appears as implicit function of the adjoint state. 
We show convergence of a Finite Element approximation, provide a globalized preconditioned inexact Newton method as solver for the discretized auxiliary problems, and embed it into an inexact path-following scheme. 
We construct a two-dimensional test problem with fully explicit solution and present numerical results to illustrate the accuracy and robustness of the approach.

\keywords{optimal control \and partial differential equations \and TV seminorm \and functions of bounded variation \and path-following Newton method}
% \PACS{PACS code1 \and PACS code2 \and more}
\subclass{35J70 \and 49-04 \and 49M05 \and 49M15 \and 49M25 \and 49J20 \and 49K20 \and 49N60}
\end{abstract}

\section*{Problem setting and introduction}
\addcontentsline{toc}{section}{Problem setting and introduction}

This work is concerned with the optimal control problem
\begin{equation} \tag{OC} \label{eq:ocintro}
\min_{(y,u)\in H_0^1(\Omega)\times \BV} \;  \underbrace{\frac{1}{2}\Norm[L^2(\Omega)]{y-y_\Omega}^2 + \beta |u|_{\BV}}_{=:J(y,u)}
\qquad\st\qquad
Ay = u,
\end{equation}
where throughout $\Omega\subset \R^N$ is a bounded $C^{1,1}$ domain and  $N\in\{1,2,3\}$.
The control $u$ belongs to the space of functions of bounded variation $\BV$, the state $y$ lives in $Y:=H^1_0(\Omega)$, the parameter $\beta$ is positive, and $Ay=u$ is a partial differential equation of the form
\begin{equation*}
\left\{
\begin{aligned}
	\mathcal{A} y + c_0 y & =  u && \text{ in }\Omega,\\
	y & = 0 &&\text{ on }\partial\Omega
\end{aligned}
\right.
\end{equation*}
with a non-negative function $c_0\in L^\infty(\Omega)$ and a linear and uniformly elliptic operator of second order in divergence form $\calA: H^1_0(\Omega) \rightarrow H^{-1}(\Omega)$,
$\calA y (\varphi) = \int_{\Omega} \sum_{i,j=1}^N a_{ij}\partial_{i} y\partial_{j} \varphi \dx$ whose coefficients satisfy $a_{ij}=a_{ji}\in C^{0,1}(\Omega)$ for all $i,j\in\{1,\ldots,N\}$.
The specific feature of \eqref{eq:ocintro} is the appearance of the BV seminorm $|u|_{\BV}$ in the cost functional, which favors piecewise constant controls and has recently attracted considerable interest in PDE-constrained optimal control, cf. \cite{BergouniouxBonnefondHaberkornPrivat,CasasKogutLeugering,kruse,Casas2019,Clason2011,EngelKunischBVWaveSemismooth,EngelVexlerTrautmann,Hafemeyer15,HafemeyerMaster,Hafemeyer2019}
and the earlier works \cite{CasasKunischPola,CasasKunischPola2}.
The majority of these contributions focuses on deriving optimality conditions and studying Finite Element approximations. In contrast, the main focus of this work is on a path-following method. Specifically, 
\begin{itemize}
	\item we propose to smooth the TV seminorm in $J$ and add an $H^1$ regularization, and show in an infinite-dimensional setting that the solutions of the resulting family of auxiliary problems converge to the solution of \eqref{eq:ocintro};
	\item for any given auxiliary problem we prove that an infinite-dimensional inexact Newton method converges locally;
	\item we derive a practical path-following method that yields accurate solutions for \eqref{eq:ocintro} and illustrate its capabilities in numerical examples for $\Omega\subset\R^2$. 
\end{itemize}  
To the best of our knowledge, these aspects have only been investigated partially for optimal control problems that involve the TV seminorm in the objective. 
In particular, there are few works that address the numerical solution when the measure $\nabla u$ is supported in a \emph{two-dimensional} set. In fact, we are only aware of \cite{Clason2011}, where a doubly-regularized version of the Fenchel predual of \eqref{eq:ocintro} is solved
for fixed regularization parameters, but path-following is not applied. We stress that in our numerical experience the two-dimensional case is significantly more challenging than the one-dimensional case.
A FeNiCs implementation of our path-following method is available at \url{https://arxiv.org/abs/2010.11628}. 
It includes all the features that we discuss in section~\ref{sec:numericalsolution}, e.g.,
a preconditioner for the Newton systems, a non-monotone line search globalization,
and inexact path-following. 

A further contribution of this work is that 
\begin{itemize}
	\item we provide an example of \eqref{eq:ocintro} for $N=2$ with fully explicit solution.
\end{itemize}
For the case that $\nabla u$ is defined in an interval ($N=1$) such examples are available, e.g. \cite{kruse,Hafemeyer2019}, but for $N=2$ this is new.

Let us briefly address three difficulties associated with \eqref{eq:ocintro}.

First, the fact that \eqref{eq:ocintro} is posed in the non-reflexive space $\BV$ complicates the proof of existence of optimal solutions. By now it is, however, well understood how to deal with this issue also in more complicated situations, cf. e.g. \cite{kruse,Casas2019}.

Second, we notice that $u\mapsto\lvert u \rvert_{\BV}$ is not differentiable. 
We will cope with this by replacing $\lvert u \rvert_{\BV}$ with the smooth functional $\psi_\delta(u)=\int_\Omega \sqrt{\lvert\nabla u\rvert^2+\delta^2}$, $\delta\geq 0$, that satisfies $\psi_0(\cdot)=\lvert \cdot \rvert_{\BV}$.
The functional $\psi_\delta$ is well-known, particularly in the imaging community, e.g. \cite{acar,chan}. However, in most of the existing works the smoothing parameter $\delta>0$ is fixed, whereas we are interested in driving $\delta$ to zero. 
We will also add the regularizer $\gamma\lVert u\rVert_{H^1(\Omega)}^2$, $\gamma\geq 0$, to $J$ and drive $\gamma$ to zero.
This allows us to prove that for $\param>0$ the optimal control $\bar u_{\param}$ of the smoothed and regularized auxiliary problem is $C^{1,\alpha}$, 
which is crucial to show, for instance, that the adjoint-to-control mapping is differentiable; cf. Theorem~\ref{thm_PtoUfrechet}. In contrast, for $\gamma=0$ only $\bar u_{0,\delta}\in\BV$ can be expected.

Third, our numerical experience with PDE-constrained optimal control involving the TV seminorm \cite{kruse,Clason2018,Hafemeyer15,HafemeyerMaster,Hafemeyer2019} suggests that path-following Newton methods work significantly better if the optimality systems of the auxiliary problems do not contain the control as independent variable. 
Therefore, we express the auxiliary optimality conditions in terms of state and adjoint state by regarding the control as an implicit function of the adjoint state.  

Let us set our work in perspective with the available literature. 
As one of the main contributions we show that the solutions of the auxiliary problems 
converge to the solution of \eqref{eq:ocintro}, cf. section~\ref{sec:regconvergence}. 
The asymptotic convergence for vanishing $H^1$ seminorm regularization is analyzed in \cite[Section~6]{Casas2019} for a more general problem than \eqref{eq:ocintro}, but the fact that our setting is less general allows us to prove convergence in stronger norms than the corresponding \cite[Theorem~10]{Casas2019}. 
The asymptotic convergence for a doubly-regularized version of the predual of \eqref{eq:ocintro} is established in \cite[Appendix~A]{Clason2011}, but one of the regularizations is left untouched, so convergence is towards the solution of a regularized problem, not towards the solution of \eqref{eq:ocintro}. 
Next, we demonstrate local convergence of an infinite-dimensional inexact Newton method applied to the optimality system of the auxiliary problem. 
Because the control and the adjoint state are coupled by a quasilinear PDE, this convergence analysis is non-trivial; among others, it relies on 
Hölder estimates for the gradient of the control that are rather technical to derive. 
A related result is \cite[Theorem~3.5]{Clason2011}, where local q-superlinear convergence of a semismooth Newton method is shown for the doubly-regularized Fenchel predual for fixed regularization parameters.
Yet, since we work with a different optimality system, the overlap is small. 
Nonetheless, \cite{Clason2011} is closely related to the present paper, and it would be interesting to embed the semismooth Newton method 
\cite[Algorithm~2]{Clason2011} in a path-following scheme and compare it to our algorithm.
The concept to view the control as an implicit function of the adjoint state or to eliminate it, is well-known in optimal control, cf., e.g., \cite{kruse,Clason2011,HintermKunisch,HintermStadler,Hinze05,HinzeTroe,NeitzelPruefertSlawig,PieperDiss,Schiela_IPMefficient,WeiserGaenzlerSchiela}.

Turning to the discrete level we provide a Finite Element approximation and demonstrate that the Finite Element solutions of the auxiliary problems converge to their non-discrete counterparts.
Finite Element approximations for optimal control in BV involving the TV seminorm have also been studied in \cite{BergouniouxBonnefondHaberkornPrivat,CasasKogutLeugering,kruse,Casas2019,EngelKunischBVWaveSemismooth,EngelVexlerTrautmann,Hafemeyer15,HafemeyerMaster,Hafemeyer2019}, but in our assessment the regularization of \eqref{eq:ocintro} that we propose is not covered by these studies. 
The papers \cite{ElvetunNielsen,PreconditioningTVRegularization} study the linear systems that arise in split Bregman methods when applied to a discretization of \eqref{eq:ocintro} with homogeneous Neumann boundary conditions.

The BV-term in \eqref{eq:ocintro} favors sparsity in the gradient of the control. Other sparsity promoting control terms that have recently been studied are measure norms and $L^1$--type functionals, 
e.g., \cite{AllendesFuicaOtarola,CCK2012,CCK2013,CasasKunisch2014,CasasRyllTroeltzsch,CasasVexlerZuazua,HSW,LiStadler,PieperDiss,Stadler}.

TV-regularization is also of significant importance in imaging problems and its usefulness for, e.g., noise removal has long been known \cite{Rudin1992}. 
However, we take the point of view that imaging problems are different in nature from optimal control problems, for instance because their forward operator is usually cheap to evaluate and non-compact.

This paper is organized as follows. After preliminaries in section~\ref{sec:prelim}, 
we consider existence, optimality conditions and convergence of solutions in section~\ref{sec:origandregulproblems}.
In section~\ref{sec_regularity} we study differentiability of the adjoint-to-control mapping, which paves the way 
for proving local convergence of an inexact Newton method in section~\ref{sec:newton}. 
Section~\ref{sec:FEapproximation} addresses the Finite Element approximation and its convergence, while section~\ref{sec:numericalsolution} provides the 
path-following method.
Numerical experiments are presented in section~\ref{sec:numericmainchapter},  including for the test problem with explicit solution. 
In section~\ref{sec_sum} we summarize. 
Several technical results such as H\"older continuity of solutions to quasilinear PDEs are deferred to the appendix.

\section{Preliminaries}\label{sec:prelim}

We recall facts about the space $\BV$, introduce the smoothed BV seminorm that we use in this work, and collect properties of the solution operator associated to the PDE in \eqref{eq:ocintro}.

\subsection{Functions of bounded variation} 

The following statements about $\BV$ can be found in \cite[Chapter~3]{ambrosio} unless stated otherwise. 
For $u\in L^1(\Omega)$ we let
\begin{equation*}
	|u|_{\BV} := \sup_{v\in C^1_0(\Omega)^N, \lVert \lvert v\rvert \rVert_\infty \leq 1} \int_\Omega u \operatorname{div} v\dx, 
\end{equation*}
where here and throughout, $|\cdot|$ denotes the Euclidean norm. 
The space of functions of bounded variation is defined as
\begin{equation*}
\BV := \Bigl\{u\in L^1(\Omega): \; \lvert u \rvert_{\BV} < \infty
\Bigr\},
\end{equation*}
and $\lvert u\rvert_{\BV}$ is called the BV seminorm (also TV seminorm) of $u\in\BV$. 
We endow $\BV$ with the norm $\lVert \cdot \rVert_{\BV} := \lVert \cdot \rVert_{L^1(\Omega)} + |\cdot|_{\BV}$
and recall from \cite[Thm.~10.1.1]{Attouch} that this makes $\BV$ a Banach space. 
It can be shown that $u\in\BV$ iff there exists a vector measure $(\partial_{x_1} u, \dots, \partial_{x_N} u )^T = \nabla u \in\M^N$ such that for all $i\in\{1,\ldots,n\}$ there holds
\begin{equation*}
\int_\Omega \partial_{x_i} u v\dx = - \int_\Omega u \partial_{x_i} v\dx \qquad \forall v\in C_0^\infty(\Omega),
\end{equation*}
where $\M$ denotes the linear space of regular Borel measures, e.g. \cite[Chapter~2]{Rudin1987}. 
Also, for $u\in\BV$ we have $|u|_{\BV}=\lVert \lvert\nabla u\rvert \rVert_{\M}$, i.e., $|u|_{\BV}$ is the total variation of the vector-measure $\nabla u$.
The space $\BV$ embeds continuously (compactly) into $L^r(\Omega)$ for $r\in[1,\frac{N}{N-1}]$ ($r \in [1, \frac{N}{N-1})$), see, e.g., \cite[Cor.~3.49 and Prop.~3.21]{ambrosio}. We use the convention that $\frac{N}{N-1}=\infty$ for $N=1$. 
Also important is the notion of strict convergence, e.g. \cite{ambrosio,Attouch}.
\begin{definition} \label{def:strictconv}
	For $r \in [1, \frac{N}{N-1} ]$ the metric $\dBVr$ is given by
	\begin{align*}
	\dBVr\colon  & \BV \times \BV \rightarrow \R,\\
	& (u,v) \mapsto \lVert u-v \rVert_{L^r(\Omega)} + \left| |u|_{\BV} - |v|_{\BV} \right|.
	\end{align*}
	Convergence with respect to $d_{\operatorname{BV},1}$ is called \emph{strict convergence}. 
\end{definition}

\begin{remark}
	The embedding $\BV \hookrightarrow L^r(\Omega)$, for $r \in [1, \frac{N}{N-1} ]$, implies that $d_{BV, r}$ is well-defined and continuous with respect to $\lVert \cdot \rVert_{\BV}$.
\end{remark}	

We will also use the following density property. 
\begin{lemma}  \label{lem:bvdensity}
	$C^\infty(\bar \Omega)$ is dense in $(\BV\cap L^r(\Omega),\,\dBVr)$ for $r \in [1, \frac{N}{N-1} ]$.
\end{lemma}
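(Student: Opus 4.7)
The plan is to reduce the claim to the classical strict density of $C^\infty(\bar\Omega)$ in $\BV$ and then upgrade the $L^1$-part of the convergence to $L^r$-convergence by combining the Sobolev embedding with interpolation. Since the continuous embedding $\BV\hookrightarrow L^{N/(N-1)}(\Omega)$ implies $\BV\cap L^r(\Omega)=\BV$ for every $r\in[1,\tfrac{N}{N-1}]$, the density statement is really about approximating an arbitrary $u\in\BV$ by smooth functions in the stronger $\dBVr$-metric.

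First, I would invoke the Anzellotti--Giaquinta-type approximation theorem (e.g.\ \cite[Thm.~3.9]{ambrosio}); the $C^{1,1}$-regularity of $\Omega$ ensures one can approximate up to the boundary, producing $(u_k)\subset C^\infty(\bar\Omega)$ with $u_k\to u$ in $L^1(\Omega)$ and $|u_k|_{\BV}\to|u|_{\BV}$, i.e.\ $d_{\operatorname{BV},1}(u_k,u)\to 0$. This already settles the case $r=1$.

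Next, the convergence $|u_k|_{\BV}\to |u|_{\BV}$ together with the $L^1$-convergence yields uniform boundedness of $(u_k)$ in $\BV$, and hence in $L^{N/(N-1)}(\Omega)$ by the continuous embedding. For $r\in[1,\tfrac{N}{N-1})$ I would then apply the standard interpolation inequality
\[
\|u_k-u\|_{L^r(\Omega)} \leq \|u_k-u\|_{L^1(\Omega)}^{\theta}\,\|u_k-u\|_{L^{N/(N-1)}(\Omega)}^{1-\theta}
\]
with $\theta\in(0,1]$ determined from $\tfrac{1}{r}=\theta+(1-\theta)\tfrac{N-1}{N}$; the first factor tends to zero and the second remains bounded, so $u_k\to u$ in $L^r(\Omega)$ and therefore $\dBVr(u_k,u)\to 0$.

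The main obstacle is the endpoint $r=\tfrac{N}{N-1}$, where the embedding is only continuous and the interpolation argument degenerates. To handle it I would reopen the Anzellotti--Giaquinta construction, which produces the $u_k$ through a partition of unity combined with local convolutions of $u$ with a smooth mollifier; since $u\in L^{N/(N-1)}(\Omega)$ by embedding and convolution is a contraction on every $L^q(\R^N)$ with $q\in[1,\infty)$, the mollification parameters can be chosen so that the resulting approximants converge to $u$ in $L^{N/(N-1)}(\Omega)$ as well as in $L^1(\Omega)$, while the convergence of the total variations proceeds exactly as in the classical proof. Alternatively, one may exploit the convergence $|u_k|_{\BV}\to|u|_{\BV}$ via a Reshetnyak-type continuity result to pass from weak to strong $L^{N/(N-1)}$-convergence. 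Either way, the endpoint requires passing through the construction of the approximation rather than applying the classical strict-density theorem as a black box.
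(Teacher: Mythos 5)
Your proposal is correct, and for the decisive endpoint $r=\frac{N}{N-1}$ it coincides with the paper's argument: reopen the Attouch/Anzellotti--Giaquinta construction of \cite[Thm.~10.1.2]{Attouch} and upgrade the $L^1$-convergence of the local mollifications to $L^r$-convergence, using that mollification is a contraction that converges strongly on every $L^q$ with $q<\infty$ (the paper cites \cite[Prop.~2.2.4]{Attouch} for exactly this). Where you differ is in handling $1\le r<\frac{N}{N-1}$ by interpolation against the uniform $\BV$-bound, treating the classical strict-density theorem as a black box for those $r$; this is valid and slightly more modular, but it buys you nothing in the end, because the endpoint forces you back inside the construction anyway, and once there the single mollification argument covers \emph{all} $r$ at once, which is what the paper does. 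Two caveats. The ``alternative'' Reshetnyak-continuity route for the endpoint would not deliver what you need: Reshetnyak continuity gives convergence of integral functionals of the gradient measures, not strong $L^{N/(N-1)}$-convergence of the functions $u_k$ themselves, so mollification is the argument to keep. And for $N=1$, where the paper's convention reads $\frac{N}{N-1}=\infty$, the endpoint must in fact be excluded, since a jump function in $\BV$ cannot be approximated in $L^\infty$ by elements of $C^\infty(\bar\Omega)$; both your proof and the paper's implicitly exclude this by invoking mollification convergence only in $L^r$ with $r<\infty$.
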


\begin{proof}
	By straightforward modifications the proof for the special case $r=1$ in \cite[Thm.~10.1.2]{Attouch} can be extended, 
	using that the sequence of mollifiers constructed in the proof converges in $L^r$, see \cite[Prop.~2.2.4]{Attouch}. \qed
\end{proof}

For the remainder of this work we fix a number $s=s(N) \in (1,\frac{N}{N-1})$ with
\begin{center}
	\fbox{$\BV\hookrightarrow\hookrightarrow L^s(\Omega)\hookrightarrow H^{-1}(\Omega)$,}
\end{center}
where the first embedding is compact and the second is continuous. For $N=1$ we interpret $\frac{N}{N-1}$ as $\infty$.

\begin{remark}
	Consider, for instance, $N=2$ and any $r\in (1,2)$. Then we have  $\BV\hookrightarrow\hookrightarrow L^r(\Omega)$ and $H^1(\Omega)\hookrightarrow L^\frac{r}{r-1}(\Omega)$ so that any $s \in(1,2)$ can be used.
\end{remark}

\subsection{The smoothed BV seminorm}

We will replace the BV seminorm in \eqref{eq:ocintro} by the function $\psi_\delta \colon \BV \rightarrow \R$,
\begin{equation*}
\psi_\delta(u) := \sup \, \Biggl\lbrace \int_\Omega u \operatorname{div} v + \sqrt{ \delta ( 1- |v|^2 ) } \dx : \; v \in C_0^1(\Omega)^N, \,  \lVert\lvert v \rvert \rVert_{L^\infty(\Omega)} \leq 1 \Biggr\rbrace,
\end{equation*}
where $\delta\geq 0$. We stress that $\psi_\delta$ is frequently employed in imaging problems for the same purpose, for instance in \cite{acar,chan}. It has the following properties. 
\begin{lemma} \label{lem:psidelta}
	The following statements are true for all $\delta\geq 0$.
	\begin{enumerate}
		\item For any $u\in \BV$ there holds 
		\begin{equation*}
		|u|_{\BV} = \psi_0 (u) \leq \psi_\delta(u) \leq |u|_{\BV} + \sqrt{\delta} |\Omega|.
		\end{equation*}
		\item $\psi_\delta$ is lower semi-continuous with respect to the $L^1(\Omega)$ norm. 
		\item $\psi_\delta$ is convex.
		\item For all $u\in W^{1,1}(\Omega)$ we have \label{thm:psideltaitem4}
		\begin{equation*}
		\psi_\delta (u) = \int_\Omega \sqrt{ \delta + |\nabla u|^2 } \dx.
		\end{equation*}
		\item The function $\psi_\delta |_{H^1(\Omega)}$ is Lipschitz with respect to $\lVert\cdot\rVert_{\Hone}$. 
	\end{enumerate}
\end{lemma}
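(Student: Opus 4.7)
The plan is to work item by item, using the representation of $\psi_\delta$ as a supremum of functionals that are affine and $L^1$-continuous in $u$ for items 1--3, then proving the explicit integral formula in item 4 by combining integration by parts with a pointwise Cauchy--Schwarz argument, and finally deducing the Lipschitz estimate in item 5 from this integral formula.

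For item 1, the identity $\psi_0(u)=|u|_\BV$ is immediate from the definition because the extra summand vanishes at $\delta=0$. Since $\sqrt{\delta(1-|v|^2)}\geq 0$ for every admissible $v$, we get $\psi_\delta(u)\geq\psi_0(u)$; and since $|v|\leq 1$ implies $\sqrt{\delta(1-|v|^2)}\leq\sqrt{\delta}$ pointwise, the extra term is at most $\sqrt{\delta}|\Omega|$, giving the upper bound. Items 2 and 3 then follow for free from the general principle that a pointwise supremum of affine, $L^1$-continuous functions is convex and lower semi-continuous: for fixed $v$, the map $u\mapsto\int_\Omega u\operatorname{div}v\dx+\int_\Omega\sqrt{\delta(1-|v|^2)}\dx$ is affine on $L^1(\Omega)$ and continuous in the $L^1$-norm because $\operatorname{div}v\in L^\infty(\Omega)$.

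Item 4 is the main obstacle. For $u\in W^{1,1}(\Omega)$, integration by parts gives $\int_\Omega u\operatorname{div}v\dx=-\int_\Omega\nabla u\cdot v\dx$ for all $v\in C_0^1(\Omega)^N$. Then for the ``$\leq$'' direction I apply Cauchy--Schwarz twice pointwise: first $-\nabla u\cdot v\leq|\nabla u||v|$, and second
\begin{equation*}
|\nabla u||v|+\sqrt{\delta}\sqrt{1-|v|^2}\;\leq\;\sqrt{|\nabla u|^2+\delta}\,\sqrt{|v|^2+(1-|v|^2)}\;=\;\sqrt{|\nabla u|^2+\delta},
\end{equation*}
which after integration yields $\psi_\delta(u)\leq\int_\Omega\sqrt{|\nabla u|^2+\delta}\dx$. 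For the converse inequality I need to realize the pointwise optimum by admissible $v$. The pointwise maximizer is $v^\ast=-\nabla u/\sqrt{|\nabla u|^2+\delta}\in L^\infty(\Omega)^N$ with $|v^\ast|\leq 1$; a direct computation shows that with this choice the integrand equals $\sqrt{|\nabla u|^2+\delta}$ exactly. Since $v^\ast$ is in general only $L^\infty$, I approximate it by a sequence $(v_k)\subset C_0^1(\Omega)^N$ with $|v_k|\leq 1$ and $v_k\to v^\ast$ a.e. (standard mollification plus truncation). Dominated convergence, using the $L^1$-majorant $|\nabla u|+\sqrt{\delta}\chi_\Omega$, then yields
\begin{equation*}
\int_\Omega\Bigl[-\nabla u\cdot v_k+\sqrt{\delta(1-|v_k|^2)}\Bigr]\dx\;\longrightarrow\;\int_\Omega\sqrt{|\nabla u|^2+\delta}\dx,
\end{equation*}
giving the reverse inequality. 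The $\delta=0$ subcase is consistent with item 1 since $\psi_0(u)=|u|_\BV=\int_\Omega|\nabla u|\dx$ for $W^{1,1}$ functions.

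Finally, item 5 follows quickly from item 4. For $u_1,u_2\in H^1(\Omega)\subset W^{1,1}(\Omega)$ (recall $\Omega$ is bounded), consider the map $F\colon\R^N\to\R$, $F(z)=\sqrt{|z|^2+\delta}$. Its gradient $\nabla F(z)=z/\sqrt{|z|^2+\delta}$ has Euclidean norm at most $1$, so $F$ is $1$-Lipschitz on $\R^N$. Hence
\begin{equation*}
|\psi_\delta(u_1)-\psi_\delta(u_2)|\;\leq\;\int_\Omega\bigl|F(\nabla u_1)-F(\nabla u_2)\bigr|\dx\;\leq\;\int_\Omega|\nabla u_1-\nabla u_2|\dx,
\end{equation*}
and one final application of Cauchy--Schwarz in $L^2(\Omega)$ yields $|\psi_\delta(u_1)-\psi_\delta(u_2)|\leq|\Omega|^{1/2}\|u_1-u_2\|_{H^1(\Omega)}$, as required.
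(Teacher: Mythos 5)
Your proof is correct and essentially self-contained, whereas the paper simply cites \cite[Section~2]{acar} for items 1--4 and only sketches item 5. Since you cannot see the proof in \cite{acar}, it is worth noting what you gain and what you should be careful about.

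Items 1--3 are exactly as standard as you make them sound: the supremum representation immediately yields the bounds and, by the usual "supremum of affine $L^1$-continuous functionals" argument, both convexity and $L^1$-lower semicontinuity. Item 5 matches the paper's one-line proof sketch (which uses the scalar Lipschitz bound for $r\mapsto\sqrt{\delta+r^2}$; your vector version $F(z)=\sqrt{|z|^2+\delta}$ with $\lvert\nabla F\rvert\leq 1$ gives the same pointwise estimate).

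The interesting part is item 4, where you avoid the citation by directly constructing the dual optimizer. The upper bound via the two-dimensional Cauchy--Schwarz estimate $|\nabla u||v|+\sqrt{\delta}\sqrt{1-|v|^2}\leq\sqrt{|\nabla u|^2+\delta}$ is clean and correct. For the lower bound, the maneuver of plugging in $v^\ast=-\nabla u/\sqrt{|\nabla u|^2+\delta}$ and verifying that the integrand attains $\sqrt{|\nabla u|^2+\delta}$ is also correct. The one place where more should be said is the phrase "standard mollification plus truncation": since $v^\ast\in L^\infty(\Omega)^N$ need not vanish near $\partial\Omega$, you must first multiply by a cutoff $\eta_n\in C_c^\infty(\Omega)$ with $0\leq\eta_n\leq 1$ and $\eta_n\uparrow 1$ on $\Omega$, then mollify (mollification preserves $\lvert\cdot\rvert\leq 1$), and finally extract an a.e.\ convergent diagonal subsequence. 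With the dominated-convergence majorant $|\nabla u|+\sqrt{\delta}\in L^1(\Omega)$ (using that $\Omega$ is bounded) this passes to the limit and completes the argument. This is a legitimate detail to leave as standard, but it is the only genuine content hidden in a word; everything else in your proof is tight. Overall, your proof is a valid elementary replacement for the citation to \cite{acar}, and as far as I can tell, it proceeds along the same lines one would find there.
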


\begin{proof}
The first four statements are from \cite[Section~2]{acar} and the last one follows 
from $H^1(\Omega)\hookrightarrow W^{1,1}(\Omega)$, 4. and the Lipschitz continuity of $r\mapsto \sqrt{\delta+r^2}$. \qed
\end{proof}

\subsection{The solution operator of the state equation}

\begin{lemma} \label{lem:solutionoperator}
	For every $u\in H^{-1}(\Omega)$ the operator equation $Ay=u$ in 
	\eqref{eq:ocintro} has a unique solution $y=y(u)\in\Y$. The solution operator
	\begin{equation*}
	S \colon H^{-1}(\Omega) \rightarrow \Y, \quad
	u \mapsto y(u) 
	\end{equation*}
	is linear, continuous, and bijective. In particular, $S$ is $L^s$-$L^2$ continuous.
	Moreover, for given $q\in (1,\infty)$ there is a constant $C>0$ such that
	\begin{equation*}
	\lVert Su \rVert_{W^{2,q}(\Omega)} \leq C \lVert u \rVert_{L^q(\Omega)}
	\end{equation*}
	is satisfied for all $u\in L^q(\Omega)$. 
\end{lemma}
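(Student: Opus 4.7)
The plan is to handle the four claims in sequence, each following from standard tools once set up correctly.

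\textbf{Existence, uniqueness, and continuity.} First I would treat the variational formulation. Define the bilinear form $a\colon \Hone\times\Hone\to\R$ by
\[
a(y,\varphi) := \int_\Omega \sum_{i,j=1}^N a_{ij}\partial_i y\,\partial_j\varphi \dx + \int_\Omega c_0 y \varphi \dx.
\]
Boundedness of $a$ on $Y\times Y$ is immediate from $a_{ij},c_0\in\Linf$, and coercivity on $Y=H_0^1(\Omega)$ follows from uniform ellipticity of $\calA$ together with $c_0\geq 0$ and Poincaré's inequality. Lax--Milgram then yields a unique $y=Su\in Y$ with $\|Su\|_Y\leq C\|u\|_{H^{-1}(\Omega)}$ for every $u\in H^{-1}(\Omega)$, proving linearity, continuity, and injectivity of $S$.

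\textbf{Bijectivity and the $L^s$--$L^2$ statement.} Surjectivity follows because for arbitrary $y\in Y$ the right-hand side $Ay:=\calA y + c_0 y$ lies in $H^{-1}(\Omega)$ and satisfies $S(Ay)=y$ by uniqueness. The $L^s$--$L^2$ continuity is then just a composition: the embedding $L^s(\Omega)\hookrightarrow H^{-1}(\Omega)$ fixed in the preliminaries, followed by $S\colon H^{-1}(\Omega)\to Y$ and the continuous embedding $Y\hookrightarrow L^2(\Omega)$, gives a continuous map $L^s(\Omega)\to L^2(\Omega)$ that coincides with $S$ on $L^s(\Omega)$.

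\textbf{The $W^{2,q}$ estimate.} This is the core part and the step where the smoothness assumptions on the data are exploited. Since $a_{ij}=a_{ji}\in C^{0,1}(\bar\Omega)$, the differential operator can be rewritten in non-divergence form
\[
\calA y + c_0 y = -\sum_{i,j=1}^N a_{ij}\,\partial_i\partial_j y - \sum_{j=1}^N \Bigl(\sum_{i=1}^N \partial_i a_{ij}\Bigr)\partial_j y + c_0 y,
\]
whose leading coefficients are continuous, whose lower-order coefficients are in $\Linf$, and whose principal part is uniformly elliptic. Combined with the $C^{1,1}$-regularity of $\partial\Omega$ and the homogeneous Dirichlet data, the classical Calderón--Zygmund / Agmon--Douglis--Nirenberg theory (e.g., \cite[Thm.~9.15 and Lem.~9.17]{} style arguments, for any $q\in(1,\infty)$) gives existence of a unique strong solution in $W^{2,q}(\Omega)\cap W^{1,q}_0(\Omega)$ and the a priori bound $\|y\|_{W^{2,q}(\Omega)}\leq C\|u\|_{L^q(\Omega)}$, with $C=C(\Omega,q,a_{ij},c_0)$. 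By uniqueness in $Y$ this strong solution coincides with $Su$ whenever $u\in L^q(\Omega)$, yielding the estimate.

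The main obstacle is making sure the standard elliptic regularity results apply with the given regularity of the data, i.e.\ Lipschitz leading coefficients, $\Linf$ zeroth-order term, and $C^{1,1}$ boundary; everything else is essentially bookkeeping via Lax--Milgram and embeddings.
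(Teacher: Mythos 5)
Your proof is correct and follows essentially the same route as the paper: Lax--Milgram for existence, uniqueness, linearity, continuity, and bijectivity, then classical elliptic $W^{2,q}$ regularity for the estimate. The only difference is cosmetic: the paper cites Grisvard (Lemma 2.4.2.1, Theorem 2.4.2.5), which treats divergence-form operators directly, while you pass to non-divergence form and invoke Gilbarg--Trudinger-style $L^q$ theory (and you should fill in the empty citation key); both routes rely on the same data assumptions, namely $C^{0,1}$ leading coefficients, $L^\infty$ zeroth-order term with $c_0\geq 0$, and a $C^{1,1}$ boundary.
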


\begin{proof}
	Except for the estimate all statements follow from the Lax-Milgram theorem. 
	The estimate is a consequence of \cite[Lemma 2.4.2.1, Theorem 2.4.2.5]{grisvard}.
	\qed 
\end{proof}

\begin{remark}\label{rem_feasiblesetocnonempty}
	From $\BV\hookrightarrow L^s(\Omega)\hookrightarrow H^{-1}(\Omega)$ and Lemma~\ref{lem:solutionoperator} we obtain that \eqref{eq:ocintro} has a nonempty feasible set. 
\end{remark}

\section{The solutions of original and regularized problems} \label{sec:origandregulproblems}

In this section we prove the existence of solutions for \eqref{eq:ocintro} and the associated regularized problems,
characterize the solutions by optimality conditions, and show their convergence in appropriate function spaces.

\subsection{The original problem: Existence of solutions and optimality conditions} \label{sec:reducedproblem}

To establish the existence of a solution for \eqref{eq:ocintro} we use the \emph{reduced problem} 
\begin{equation*} \tag{ROC} \label{eq:redProblem}
	\min_{u\in\BV} \; \underbrace{\frac{1}{2} \lVert Su-y_\Omega \rVert^2_{L^2(\Omega)} + \beta \lvert u \rvert_{\BV}}_{=:j(u)}.
\end{equation*}

\begin{lemma} \label{lem:uregjconvex}\label{lem:uregjcont}
	The function $j:\BV\rightarrow\R$ is well-defined, strictly convex, continuous with respect to $d_{BV,s}$, and coercive with respect to 
	$\norm[\BV]{\cdot}$.
\end{lemma}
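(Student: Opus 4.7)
The plan is to address the four assertions in turn, treating well-definedness, convexity and continuity with straightforward compositions, and reserving the real work for coercivity, which requires a Poincaré-type argument in $\BV$.

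For \emph{well-definedness}, I would chain $\BV \hookrightarrow L^s(\Omega) \hookrightarrow H^{-1}(\Omega)$ with Lemma~\ref{lem:solutionoperator} to get $Su\in\Y\hookrightarrow\LLL$, so the tracking term is finite, while $|u|_{\BV}<\infty$ by definition of $\BV$. For \emph{strict convexity}, note that $\beta|\cdot|_{\BV}$ is convex since the BV seminorm is sublinear, and $u\mapsto \tfrac12\Norm[\LLL]{Su-y_\Omega}^2$ is the composition of a strictly convex quadratic with the \emph{injective} linear operator $S$ (injectivity on $H^{-1}(\Omega)$ comes from Lemma~\ref{lem:solutionoperator}). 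Thus for $u_1\neq u_2$ one has $Su_1\neq Su_2$, and the standard parallelogram computation shows strict convexity of the tracking term; adding a convex functional preserves strictness.

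For \emph{continuity} with respect to $\dBVr$ with $r=s$, let $u_n\to u$ in $d_{\operatorname{BV},s}$. By definition this gives $u_n\to u$ in $L^s(\Omega)$ and $|u_n|_{\BV}\to |u|_{\BV}$. The embedding $L^s(\Omega)\hookrightarrow H^{-1}(\Omega)$ and the $L^s$-$L^2$ continuity of $S$ from Lemma~\ref{lem:solutionoperator} yield $Su_n\to Su$ in $\LLL$, so $\tfrac12\Norm[\LLL]{Su_n-y_\Omega}^2 \to \tfrac12\Norm[\LLL]{Su-y_\Omega}^2$, and adding the convergence $\beta|u_n|_{\BV}\to\beta|u|_{\BV}$ gives $j(u_n)\to j(u)$.

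The main obstacle is \emph{coercivity} with respect to $\Norm[\BV]{\cdot}=\Norm[L^1(\Omega)]{\cdot}+|\cdot|_{\BV}$, because $j$ bounds the BV seminorm directly but only controls $\Norm[L^1(\Omega)]{u}$ indirectly through $S$. Suppose $\Norm[\BV]{u_n}\to\infty$. If $|u_n|_{\BV}\to\infty$ we are done since $j(u_n)\ge\beta|u_n|_{\BV}$. Otherwise $|u_n|_{\BV}$ stays bounded while $\Norm[L^1(\Omega)]{u_n}\to\infty$. I would split $u_n=v_n+c_n$ with $c_n:=|\Omega|^{-1}\int_\Omega u_n\dx$, so $\int_\Omega v_n\dx=0$ and $|v_n|_{\BV}=|u_n|_{\BV}$. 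The BV-Poincaré inequality (e.g.\ \cite[Thm.~3.44]{ambrosio}) then gives a uniform bound for $v_n$ in $L^{N/(N-1)}(\Omega)$, hence in $L^s(\Omega)$ and $L^1(\Omega)$, forcing $|c_n|\to\infty$. Writing $\eta:=S(1)\in\Y$, which is nonzero in $\LLL$ since $S$ is injective and $1\neq 0$ in $H^{-1}(\Omega)$, the linearity of $S$ yields $Su_n=Sv_n+c_n\eta$, with $Sv_n$ uniformly bounded in $\LLL$ by Lemma~\ref{lem:solutionoperator}. The reverse triangle inequality then gives
\begin{equation*}
\Norm[\LLL]{Su_n-y_\Omega}\;\ge\;|c_n|\,\Norm[\LLL]{\eta}-\Norm[\LLL]{Sv_n}-\Norm[\LLL]{y_\Omega}\;\to\;\infty,
\end{equation*}
so $j(u_n)\to\infty$, completing the argument.
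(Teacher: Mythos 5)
Your proof is correct, and the first three assertions (well-definedness via $\BV\hookrightarrow L^s\hookrightarrow H^{-1}$ and Lemma~\ref{lem:solutionoperator}, strict convexity via injectivity of $S$, continuity via $L^s$-$L^2$ continuity of $S$) are handled exactly as the paper does. The only divergence is coercivity: the paper dispatches it in one sentence by citing \cite[Lemma~4.1]{acar} together with injectivity and $L^s$-$L^2$ continuity of $S$, whereas you reconstruct the underlying argument from scratch. Your mean-free decomposition $u_n = v_n + c_n$, the BV--Poincar\'e bound $\Norm[L^{N/(N-1)}(\Omega)]{v_n} \leq C\lvert u_n\rvert_{\BV}$, the observation that $\lvert c_n\rvert\to\infty$, and the reverse triangle inequality with $\eta = S(1)\neq 0$ is precisely the mechanism inside the cited lemma of Acar--Vogel, so in substance this is the same approach, just unpacked. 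That is a perfectly reasonable thing to do if one wants the proof self-contained; the cost is length, and the benefit is that the reader sees exactly where injectivity of $S$ enters. One small technical point worth tightening: the dichotomy ``either $\lvert u_n\rvert_{\BV}\to\infty$ or $\lvert u_n\rvert_{\BV}$ stays bounded'' is not exhaustive for an arbitrary sequence, so you should either argue along subsequences (every subsequence admits a further subsequence on which one of the two cases holds, and $j\to\infty$ along it) or phrase it as a contradiction (if $j(u_{n_k})$ is bounded, then $\lvert u_{n_k}\rvert_{\BV}$ is bounded, forcing $\Norm[L^1(\Omega)]{u_{n_k}}\to\infty$, whence the Poincar\'e argument gives the contradiction). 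Either fix is immediate and does not affect the substance.
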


\begin{proof}
	The term $\frac{1}{2} \lVert Su-y_\Omega \rVert^2_{L^2(\Omega)}$ is well-defined by Remark~\ref{rem_feasiblesetocnonempty} and strictly convex in $u$ due to the injectivity of $S$. 
	Since $\lvert \cdot \rvert_{\BV}$ is convex, the strict convexity of $j$ follows.
	The continuity holds because $S$ is $L^s$-$L^2$ continuous.
	The coercivity follows by virtue of \cite[Lemma~4.1]{acar} 
	using again that $S$ is injective and $L^s$-$L^2$ continuous.
	\qed
\end{proof}

The strict convexity implies that $j$ has at most one (local=global) minimizer. 

\begin{theorem}\label{thm_originalproblemhasuniqueglobalsolution}
	The problem \eqref{eq:redProblem} has a unique solution $\bar u\in\BV$.
\end{theorem}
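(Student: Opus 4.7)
The plan is a textbook application of the direct method of the calculus of variations, taking advantage of the compact embedding $\BV \hookrightarrow\hookrightarrow L^s(\Omega)$ to bypass the non-reflexivity of $\BV$. Uniqueness is immediate from the strict convexity of $j$ established in Lemma~\ref{lem:uregjconvex}, so the whole work lies in existence.

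For existence I would pick a minimizing sequence $(u_n)\subset\BV$ with $j(u_n)\downarrow \inf_{\BV} j$, which is finite thanks to Remark~\ref{rem_feasiblesetocnonempty} and Lemma~\ref{lem:uregjconvex}. By the coercivity statement in Lemma~\ref{lem:uregjconvex}, the sequence $(u_n)$ is bounded in $\BV$. Exploiting the compact embedding $\BV \hookrightarrow\hookrightarrow L^s(\Omega)$ fixed earlier in the preliminaries, I would extract a (not relabeled) subsequence with $u_n \to \bar u$ in $L^s(\Omega)$, for some $\bar u\in L^s(\Omega)$.

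Next I would show that $\bar u\in\BV$ and that $j$ is sequentially lower semicontinuous along this subsequence. Since $\Omega$ is bounded, $L^s$-convergence implies $L^1$-convergence, so the standard lower semicontinuity of $|\cdot|_{\BV}$ with respect to $L^1$-convergence (cf.\ \cite[Chapter~3]{ambrosio}) yields $\bar u \in \BV$ together with $|\bar u|_{\BV} \leq \liminf_{n\to\infty} |u_n|_{\BV}$. At the same time, the $L^s$-$L^2$ continuity of $S$ from Lemma~\ref{lem:solutionoperator} gives $Su_n \to S\bar u$ in $L^2(\Omega)$, so $\tfrac{1}{2}\|S u_n - y_\Omega\|_{L^2(\Omega)}^2 \to \tfrac{1}{2}\|S\bar u - y_\Omega\|_{L^2(\Omega)}^2$. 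Combining both, $j(\bar u) \leq \liminf_{n\to\infty} j(u_n) = \inf_{\BV} j$, so $\bar u$ is a minimizer. Uniqueness follows from the strict convexity of $j$ noted after Lemma~\ref{lem:uregjcont}.

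I do not anticipate a real obstacle: the only subtlety is that $\BV$ is not reflexive, which rules out a direct weak-compactness argument, but the author has set everything up (fixing $s \in (1,\tfrac{N}{N-1})$ with the compact embedding into $L^s$, and establishing $L^s$-$L^2$ continuity of $S$) precisely so that this $L^s$-based direct method goes through cleanly.
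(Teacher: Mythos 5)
Your proof is correct and follows the same underlying strategy as the paper, which handles this case (the $\gamma=\delta=0$ instance inside the proof of Theorem~\ref{thm:solutions}) by citing \cite[Theorem~4.1]{acar}: that cited result is itself a direct-method argument using coercivity of the BV penalty, compactness into $L^s$, lower semicontinuity of $|\cdot|_{\BV}$ under $L^1$ convergence, and continuity of the data-fidelity term. You have simply unpacked the citation into an explicit argument, and the uniqueness step via strict convexity matches the paper's as well.
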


\begin{proof}
	The proof is included in the proof of Theorem~\ref{thm:solutions}. \qed
\end{proof}

As usual, the \emph{optimal state} $\bar y$ and the \emph{optimal adjoint state} $\bar p$ are given by
	\begin{equation*}
		\bar y:=S\bar u\in\Y\cap W^{2,r_N}(\Omega) \qquad\text{ and }\qquad \bar p := S^\ast(\bar y - y_\Omega),
	\end{equation*}	
	where, due to $\BV\hookrightarrow L^{\frac{N}{N-1}}(\Omega)$ and Lemma~\ref{lem:solutionoperator}, 
	we have $r_N=\frac{N}{N-1}$ for $N\in\{2,3\}$, respectively, $r_N \geq 1$ arbitrarily large for $N=1$. Moreover, $S^\ast\colon H^{-1}(\Omega) \rightarrow \Y$ is the dual operator of $S$, where we have identified 
	the dual space of $H^{-1}(\Omega)$ with $\Y$ using reflexivity. Since $S^\ast=S$ and $\bar y-y_\Omega\in L^2(\Omega)$, Lemma~\ref{lem:solutionoperator} yields $\bar p\in P$ for
	\begin{equation*}
		P:=H^2(\Omega) \cap H_0^1(\Omega).
	\end{equation*}
It is standard to show that $\bar p$ is the unique weak solution of
\begin{equation*}
\left\{
\begin{aligned}
\mathcal{A} p + c_0 p &=  \bar y - y_\Omega && \text{ in }\Omega,\\
p &= 0 && \text{ on }\partial\Omega.
\end{aligned}\right.
\end{equation*}

\begin{remark}
	The optimality conditions of \eqref{eq:redProblem} are only needed for the construction of the test problem with explicit solution in appendix~\ref{sec_examplewithexplicitsolution}, but not for the following analysis. They are deferred to appendix~\ref{sec_optcondorigprob}. We stress, however, that they allow to discuss the sparsity of $\nabla \bar u$, cf. Remark~\ref{rem_sparsityg}. 
\end{remark}

\subsection{The regularized problems: Existence of solutions and optimality conditions} \label{sec:regprob}

Smoothing the BV seminorm and adding an $H^1$ regularization to $j$ yields 
\begin{equation*} \tag{\mbox{ROC$_{\gamma,\delta}$}} \label{eq:regProblem}
	\min_{u\in\BV} \; \underbrace{\frac{1}{2} \Norm[\LLL]{Su-y_\Omega}^2 + \beta \psi_\delta(u) + \frac{\gamma}{2}\Norm[H^1(\Omega)]{u}^2}_{=:j_\param(u)},
\end{equation*}
where we set $j_\param(u):=+\infty$ for $u\in\BV\setminus\Hone$ if $\gamma>0$.

\begin{lemma} \label{thm:jconvex}
	For any $\param\geq 0$ the function $j_\param:\BV\rightarrow\R\cup\{+\infty\}$ is well-defined, strictly convex, and coercive with respect to $\norm[\BV]{\cdot}$.
	Moreover, the function $j_\param\vert_{\Hone}$ is $\Hone$ continuous for any $\param\geq 0$.
\end{lemma}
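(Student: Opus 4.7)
The plan is to verify in sequence the four assertions of the lemma---well-definedness, strict convexity, $\BV$-coercivity, and $\Hone$-continuity of the restriction---by applying the properties of $S$ collected in Lemma~\ref{lem:solutionoperator}, those of $\psi_\delta$ in Lemma~\ref{lem:psidelta}, and the corresponding statement for $j$ already shown in Lemma~\ref{lem:uregjcont}. No new analytic input is required; the proof is essentially bookkeeping, and I do not anticipate a genuinely difficult step.

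For well-definedness, I would note that for $u\in\BV$ the tracking term $\tfrac12\Norm[\LLL]{Su-y_\Omega}^2$ takes a finite value because $\BV\hookrightarrow L^s(\Omega)\hookrightarrow H^{-1}(\Omega)$ and $S$ maps into $\Y\hookrightarrow\LLL$ by Lemma~\ref{lem:solutionoperator}. By item~1 of Lemma~\ref{lem:psidelta} we have $\psi_\delta(u)\le |u|_{\BV}+\sqrt{\delta}\,|\Omega|<\infty$, so the second term is real-valued, and the third term is non-negative and equals $+\infty$ precisely on $\BV\setminus\Hone$ when $\gamma>0$ by the convention stated just after the problem statement; hence $j_\param$ takes values in $\R\cup\{+\infty\}$.

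Strict convexity and coercivity follow almost verbatim from the reasoning used for $j$ in Lemma~\ref{lem:uregjcont}. The tracking term is strictly convex because $S$ is linear and injective; $\psi_\delta$ is convex by item~3 of Lemma~\ref{lem:psidelta}, and $\tfrac{\gamma}{2}\Norm[\Hone]{\cdot}^2$ is convex. Summing a strictly convex and two convex functionals yields a strictly convex $j_\param$. For coercivity, item~1 of Lemma~\ref{lem:psidelta} gives $\psi_\delta\ge\psi_0=|\cdot|_{\BV}$, and the $\Hone$ term is non-negative, so $j_\param(u)\ge j(u)$ on all of $\BV$. The $\BV$-coercivity of $j$ from Lemma~\ref{lem:uregjcont} therefore transfers to $j_\param$.

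For the $\Hone$-continuity of $j_\param|_{\Hone}$ I would treat the three summands separately. Since $\Hone\hookrightarrow\LLL\hookrightarrow H^{-1}(\Omega)$ and $S\colon H^{-1}(\Omega)\to\Y\hookrightarrow\LLL$ is continuous, the tracking term is $\Hone$-continuous as a composition of continuous maps. The term $\beta\psi_\delta$ is $\Hone$-Lipschitz by item~5 of Lemma~\ref{lem:psidelta}, and $\tfrac{\gamma}{2}\Norm[\Hone]{\cdot}^2$ is manifestly $\Hone$-continuous. The only point requiring a moment's care is that one must genuinely restrict to $\Hone$ even when $\gamma=0$, since $\psi_\delta$ is only claimed to be Lipschitz on $\Hone$ rather than on $\BV$; this, however, is precisely what the statement demands, so no real difficulty arises.
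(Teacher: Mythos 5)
Your proposal is correct and follows essentially the same route as the paper: well-definedness and strict convexity transfer from the argument used for $j$ in Lemma~\ref{lem:uregjconvex}, coercivity comes from the pointwise bound $j\leq j_\param$ via Lemma~\ref{lem:psidelta}~1., and $H^1$-continuity is checked term by term using the continuity of $S$, the Lipschitz property of $\psi_\delta|_{\Hone}$, and the obvious continuity of the $\Hone$-norm squared. You merely spell out a few steps that the paper dispatches by reference to the earlier lemma.
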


\begin{proof}
	The well-definition and strict convexity of $j_\param$ follow similarly as for $j$ in Lemma~\ref{lem:uregjconvex}. 
	Using Lemma~\ref{lem:psidelta}~1. we find $j\leq j_\param$, so $j_\param$ inherits the coercivity from $j$.	
	The continuity follows term by term. 
	For the first term it is enough to recall from Lemma~\ref{lem:solutionoperator} the $L^2$-$L^2$ continuity of $S$. 
	The second term is Lipschitz in $H^1$ by Lemma~\ref{lem:psidelta}. The continuity of the third term is obvious. \qed 
\end{proof}

We obtain existence of unique and global solutions. 

\begin{theorem} \label{thm:solutions}
	For any $\param \geq 0$, \eqref{eq:regProblem} has a unique solution $\bar u_\param \in \BV$. 
	For $\gamma>0$ we have $\bar u_\param\in H^1(\Omega)$.
\end{theorem}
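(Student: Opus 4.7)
The plan is to prove existence by the direct method of the calculus of variations, exploiting the properties gathered in Lemma~\ref{thm:jconvex} and Lemma~\ref{lem:psidelta}, and to get uniqueness and the statement about $H^1$-regularity essentially for free.

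First, I would observe that the infimum of $j_\param$ is finite: evaluating at $u\equiv 0\in H^1(\Omega)\cap\BV$ and using Lemma~\ref{lem:psidelta}~4.\ gives $j_\param(0)=\tfrac{1}{2}\norm[\LLL]{y_\Omega}^2+\beta\sqrt{\delta}\lvert\Omega\rvert<\infty$. Pick a minimizing sequence $(u_n)\subset\BV$. The coercivity of $j_\param$ on $\BV$ (Lemma~\ref{thm:jconvex}) forces $(u_n)$ to be bounded in $\BV$. The compact embedding $\BV\hookrightarrow\hookrightarrow L^s(\Omega)$ fixed in section~\ref{sec:prelim} then extracts a (non-relabelled) subsequence converging strongly in $L^s(\Omega)$ to some limit $\bar u_\param$. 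Weak-$\ast$ compactness of $\nabla u_n$ in $\M^N$, combined with the identification of the distributional gradient of the $L^s$-limit, places this limit in $\BV$ and provides $\norm[\BV]{\bar u_\param}\leq\liminf_n\norm[\BV]{u_n}$.

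Next, I would pass to the limit term by term. The tracking term converges by the $L^s$--$L^2$ continuity of $S$ recorded in Lemma~\ref{lem:solutionoperator}. The smoothed seminorm satisfies $\psi_\delta(\bar u_\param)\leq\liminf_n \psi_\delta(u_n)$ by the $L^1$-lower semi-continuity in Lemma~\ref{lem:psidelta}~2., which applies because $L^s$ convergence on the bounded domain $\Omega$ implies $L^1$ convergence. If $\gamma=0$ the regularization term is absent and we are done. If $\gamma>0$, the fact that $j_\param(u_n)$ is bounded forces $u_n\in\Hone$ eventually with a uniform $\Hone$ bound; a further subsequence converges weakly in $\Hone$ to some $v\in\Hone$ which, by uniqueness of the $L^s$-limit, coincides with $\bar u_\param$. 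This yields the claim $\bar u_\param\in\Hone$ and, via weak lower semi-continuity of $\norm[\Hone]{\cdot}$, the inequality $\norm[\Hone]{\bar u_\param}^2\leq\liminf_n\norm[\Hone]{u_n}^2$. Adding the three estimates shows $j_\param(\bar u_\param)\leq\liminf_n j_\param(u_n)=\inf j_\param$, so $\bar u_\param$ is a global minimizer. Uniqueness follows at once from the strict convexity stated in Lemma~\ref{thm:jconvex}, and specialising to $\gamma=\delta=0$ simultaneously proves Theorem~\ref{thm_originalproblemhasuniqueglobalsolution}.

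I do not anticipate a genuine obstacle: the definition of $\psi_\delta$ in Lemma~\ref{lem:psidelta} is precisely engineered to give lower semi-continuity in the right topology, and the choice of $s\in(1,N/(N-1))$ in the preliminaries ensures that $\BV$ embeds compactly into $L^s(\Omega)$ while $S\colon L^s(\Omega)\to \LLL$ remains continuous. The only point deserving care is the split between $\gamma=0$ and $\gamma>0$, since in the latter case the effective energy space is $\Hone$ and one must separately justify that the weak $\Hone$-limit agrees with the $L^s$-limit before concluding $\bar u_\param\in\Hone$.
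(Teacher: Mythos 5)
Your proof is correct, and it is more self-contained than the one in the paper. The paper handles the two cases differently and briefly: for $\gamma>0$ it simply invokes the standard fact that a strongly convex, continuous functional on the Hilbert space $\Hone$ has a unique minimizer (using Lemma~\ref{thm:jconvex}), and for $\gamma=0$ it cites the existence result \cite[Theorem~4.1]{acar}, noting that the convexity assumption on $\Omega$ made there is unnecessary. You instead run the direct method explicitly: coercivity in $\BV$, compact embedding into $L^s$, weak-$\ast$ compactness of the gradient measures to land in $\BV$, term-by-term lower semicontinuity via Lemma~\ref{lem:psidelta}~2, and a separate extraction of a weakly $\Hone$-convergent subsequence in the case $\gamma>0$ to identify $\bar u_\param\in\Hone$. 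This buys you independence from the external reference and makes fully explicit why the $\gamma=\delta=0$ specialisation also covers Theorem~\ref{thm_originalproblemhasuniqueglobalsolution}, which the paper's proof also claims but leaves implicit. One small remark: the intermediate inequality $\norm[\BV]{\bar u_\param}\leq\liminf_n\norm[\BV]{u_n}$ that you extract from weak-$\ast$ convergence of the gradients is not actually used — the relevant lower semicontinuity is that of $\psi_\delta$, which you then obtain separately from Lemma~\ref{lem:psidelta}~2 — so that sentence could be trimmed to just the conclusion $\bar u_\param\in\BV$.
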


\begin{proof}
	Since $j_\param$ is strictly convex by Lemma~\ref{thm:jconvex}, there is at most one minimizer.
	For $\gamma>0$ the existence of $\bar u_{\param}\in \Hone$ follows from standard arguments since $j_\param\vert_{H^1(\Omega)}$ is strongly convex 
	and $H^1$ continuous by Lemma~\ref{thm:jconvex}. 
	For $\gamma=0$ and any $\delta\geq 0$, the existence of a minimizer follows from \cite[Theorem~4.1]{acar} by use of the injectivity and boundedness of $S:H^{-1}(\Omega)\rightarrow Y\hookrightarrow L^2(\Omega)$. While $\Omega$ is convex in \cite{acar}, \cite[Theorem~4.1]{acar} remains true without this assumption.
	\qed 
\end{proof}

\emph{Optimal state} $\bar y_\param$ and \emph{optimal adjoint state} $\bar p_\param$ 
for \eqref{eq:regProblem} are given by 
\begin{equation*}
	\bar y_\param := S \bar u_\param\in \Y\cap W^{2,r_N}(\Omega) \qquad\text{ and }\qquad
	\bar p_\param := S^\ast \bigl( \bar y_\param - y_\Omega \bigr)\in P,
\end{equation*}
where $r_N=\frac{N}{N-1}$ for $N\in\{2,3\}$, respectively, $r_N\geq 1$ arbitrarily large for $N=1$.
In particular, $\bar p_\param$ is the unique weak solution of
\begin{equation*}
\left\{
\begin{aligned}
\mathcal{A} p + c_0 p &=  \bar y_\param - y_\Omega & &\text{ in }\Omega,\\
p &= 0 & &\text{ on }\partial\Omega.
\end{aligned}
\right.
\end{equation*}

The optimality conditions of \eqref{eq:regProblem} are based on differentiability of $j_\param$.

\begin{lemma} \label{lem:jderivatives}
	For $\param>0$ the functional $j_{\param}:\hatU\rightarrow\R$ is 
	\Frechet differentiable. 
	Its first derivative is given by 
	\begin{equation*}
	j^\prime_\param(u)v = \left( S^\ast(Su-y_\Omega), v \right)_{L^2(\Omega)} + \beta \psi_\delta^\prime(u)v + \gamma (u, v)_{H^1(\Omega)} \qquad\forall v\in \hatU,
	\end{equation*}
	where 
	\begin{equation*}
	\psi_\delta^\prime(u) v = \int_\Omega \frac{(\nabla u, \nabla v)}{\sqrt{ \delta + |\nabla u|^2} } \dx \qquad\forall v\in \hatU.
	\end{equation*}
\end{lemma}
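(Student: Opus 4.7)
The plan is to establish Fréchet differentiability of each of the three summands of $j_{\param}$ on $\hatU$ separately and then sum. For the tracking term, the embedding chain $\hatU \hookrightarrow L^s(\Omega) \hookrightarrow H^{-1}(\Omega)$ together with Lemma~\ref{lem:solutionoperator} makes $u\mapsto Su - y_\Omega$ an affine continuous map into $\LLL$, so composing with the smooth quadratic $z\mapsto \tfrac{1}{2}\Norm[\LLL]{z}^2$ yields Fréchet differentiability with derivative $v\mapsto (S^\ast(Su-y_\Omega),v)_{\LLL}$. The Tikhonov term $\tfrac{\gamma}{2}\Norm[H^1(\Omega)]{u}^2$ is a continuous quadratic form on the Hilbert space $\hatU$, whose derivative $v\mapsto \gamma(u,v)_{H^1(\Omega)}$ follows from polarization. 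Both of these contributions are routine; the real work lies in $\psi_\delta\vert_{\hatU}$.

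Because $\hatU \hookrightarrow W^{1,1}(\Omega)$, item 4 of Lemma~\ref{lem:psidelta} gives the pointwise representation $\psi_\delta(u) = \int_\Omega f(\nabla u)\dx$ with $f(\xi) := \sqrt{\delta+|\xi|^2}$. The function $f$ is $C^\infty$ with gradient $\nabla f(\xi) = \xi/\sqrt{\delta+|\xi|^2}$ satisfying $|\nabla f(\xi)|\leq 1$ and with Hessian
\[
  D^2 f(\xi) = \frac{1}{\sqrt{\delta+|\xi|^2}}\, I - \frac{\xi\otimes\xi}{(\delta+|\xi|^2)^{3/2}}
\]
of operator norm bounded by $(\delta+|\xi|^2)^{-1/2} \leq \delta^{-1/2}$ uniformly in $\xi$. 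The first bound implies that the candidate derivative $v\mapsto \int_\Omega (\nabla u,\nabla v)/\sqrt{\delta+|\nabla u|^2}\dx$ is linear and continuous on $\hatU$, since it is dominated by $|\Omega|^{1/2}\Norm[H^1(\Omega)]{v}$.

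To verify Fréchet differentiability I would apply pointwise Taylor expansion to $f$: for a.e.\ $x\in\Omega$ and every $u,v\in\hatU$,
\[
  \bigl| f(\nabla u(x)+\nabla v(x)) - f(\nabla u(x)) - \nabla f(\nabla u(x))\cdot \nabla v(x) \bigr| \leq \frac{1}{2\sqrt{\delta}}\,|\nabla v(x)|^2.
\]
Since $|\nabla v|^2 \in L^1(\Omega)$ whenever $v\in\hatU$, integration yields
\[
  \bigl| \psi_\delta(u+v) - \psi_\delta(u) - \psi_\delta^\prime(u)v \bigr| \leq \frac{1}{2\sqrt{\delta}}\,\Norm[H^1(\Omega)]{v}^2,
\]
which is $o(\Norm[H^1(\Omega)]{v})$ as $v\to 0$ in $\hatU$. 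Summing the derivatives of the three terms produces the formula claimed in the lemma.

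The main (and essentially the only) obstacle is the Hessian bound on $f$, which hinges on $\delta>0$ and degenerates like $\delta^{-1/2}$ as $\delta\downarrow 0$; this reflects the well-known non-differentiability of $\psi_0 = |\cdot|_{\BV}$. Beyond this, the argument merely requires routine manipulation of pointwise a.e.\ identities for $L^2$ vector fields, so I do not anticipate further technical difficulties.
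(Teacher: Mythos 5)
Your proof is correct, and the handling of the tracking and Tikhonov terms matches the paper's (they are dismissed as ``standard''). For the critical term $\psi_\delta$, however, you take a genuinely different route. You establish Fr\'echet differentiability in one step by the pointwise second-order Taylor estimate
\begin{equation*}
\bigl| f(\nabla u + \nabla v) - f(\nabla u) - \nabla f(\nabla u)\cdot\nabla v \bigr| \leq \tfrac{1}{2\sqrt{\delta}}\,|\nabla v|^2
\end{equation*}
derived from the uniform operator-norm bound $\norm{D^2 f}\leq \delta^{-1/2}$ on $\R^N$, and then integrate to get a remainder of order $\Norm[H^1(\Omega)]{v}^2$. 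The paper instead proceeds via the appendix Lemma~\ref{thm:psiderivativeNEU}: it first proves G\^ateaux differentiability by applying dominated convergence to the difference quotient, then proves existence of the second G\^ateaux derivative by a similar argument, and finally uses the uniform bound on $\psi_\delta''$ to conclude that $\psi_\delta'$ is globally Lipschitz on $H^1(\Omega)$, whence Fr\'echet differentiability. Your argument is the more economical path to the exact claim of Lemma~\ref{lem:jderivatives}; the paper's longer route pays off by delivering the stronger conclusions (Lipschitz continuity of $\psi_\delta'$ and the explicit formula for $\psi_\delta''$) that are reused elsewhere, e.g.\ in Theorem~\ref{thm:quasilinearls}. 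Both correctly identify $\delta>0$ as the source of the essential estimate.
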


\begin{proof}
	It suffices to argue for $\psi_\delta$, which we do in Lemma~\ref{thm:psiderivativeNEU} in appendix~\ref{sec_differentiabilityofpsidelta}. The other terms are standard. \qed 
\end{proof}

For differentiable convex functions a vanishing derivative is necessary and sufficient for a global minimizer. 
This yields the following optimality conditions.
\begin{theorem} \label{thm:optcond}
	For $\param > 0$ the control $\bar u_\param\in \hatU$ is the solution of \eqref{eq:regProblem} iff
	\begin{equation*}
	j_\param^\prime(\bar u_\param)v = 0 \qquad \forall v \in \hatU,
	\end{equation*}
	which is the nonlinear Neumann problem
	\begin{equation} \label{eq:optpde}
	\gamma (\bar u_\param, v)_{H^1(\Omega)} + \beta \int_\Omega \frac{\left( \nabla \bar u_\param, \nabla v \right) }{ \sqrt{ \delta + | \nabla \bar u_\param |^2 } }\dx = -(\bar p_\param, v)_{L^2(\Omega)} \qquad \forall v\in \hatU,
	\end{equation}
	where $\bar p_\param = S^\ast ( S \bar u_\param - y_\Omega)$.
\end{theorem}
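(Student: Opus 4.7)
The plan is to view Theorem~\ref{thm:optcond} as a direct consequence of the standard fact that for a convex, Fréchet differentiable function on a Banach space, global minimality is equivalent to a vanishing derivative. The only work lies in assembling the pieces already established and in rewriting the derivative into the Neumann form.

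First, I would justify that the minimization problem can be posed on $\hatU=H^1(\Omega)$ rather than on $\BV$. Since $\gamma>0$, the convention $j_\param(u)=+\infty$ on $\BV\setminus \Hone$ means that any minimizer $\bar u_\param$ of \eqref{eq:regProblem} necessarily lies in $\Hone$, and conversely every minimizer of $j_\param|_{\Hone}$ over $\Hone$ is a minimizer of $j_\param$ over $\BV$. In particular, by Theorem~\ref{thm:solutions}, $\bar u_\param\in \Hone$, so it makes sense to test the derivative along directions $v\in\Hone$.

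Second, I would invoke the convex optimization characterization. By Lemma~\ref{thm:jconvex} the functional $j_\param|_{\Hone}:\Hone\to\R$ is convex, and by Lemma~\ref{lem:jderivatives} it is Fréchet differentiable on $\Hone$. For such a function, $\bar u_\param\in\Hone$ is a (global) minimizer if and only if $j_\param'(\bar u_\param)v=0$ for all $v\in\Hone$; the necessity is the usual first-order condition on a Banach space, and the sufficiency is the convex-subgradient inequality $j_\param(u)\geq j_\param(\bar u_\param)+j_\param'(\bar u_\param)(u-\bar u_\param)$ for all $u\in\Hone$.

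Third, I would insert the formula from Lemma~\ref{lem:jderivatives}, namely
\begin{equation*}
j_\param'(\bar u_\param)v=(S^\ast(S\bar u_\param-y_\Omega),v)_{L^2(\Omega)}+\beta\int_\Omega\frac{(\nabla\bar u_\param,\nabla v)}{\sqrt{\delta+|\nabla\bar u_\param|^2}}\dx+\gamma(\bar u_\param,v)_{H^1(\Omega)},
\end{equation*}
and identify the first term using the definition $\bar p_\param:=S^\ast(S\bar u_\param-y_\Omega)$ introduced above the theorem. Setting $j_\param'(\bar u_\param)v=0$ and rearranging gives exactly the weak Neumann problem \eqref{eq:optpde}. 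There is no real obstacle here; the only mild subtlety is the reduction from $\BV$ to $\Hone$ in the first step, which hinges on the convention $j_\param\equiv+\infty$ on $\BV\setminus\Hone$ for $\gamma>0$ and on Theorem~\ref{thm:solutions}.
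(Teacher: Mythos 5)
Your argument is correct and follows exactly the route the paper takes: the paper states immediately before the theorem that "for differentiable convex functions a vanishing derivative is necessary and sufficient for a global minimizer," appealing implicitly to the convexity from Lemma~\ref{thm:jconvex}, the \Frechet differentiability from Lemma~\ref{lem:jderivatives}, and the reduction to $\Hone$ via the $+\infty$ convention. You have simply spelled out these steps in full.
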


\subsection{Convergence of the path of solutions} \label{sec:regconvergence}

We prove that $(\bar u_\param$, $\bar y_\param$,$\bar p_\param)$ converges to 
$(\bar u,\bar y,\bar p)$ for $\param\to 0$. As a first step we show convergence of the objective values.

\begin{lemma} \label{lem:vcont00}
	We have
	\begin{equation*}
	j_\param(\bar u_\param) \xrightarrow{ \R_{\geq 0}^2 \ni \parambr\rightarrow (0,0)} j(\bar u).
	\end{equation*}
\end{lemma}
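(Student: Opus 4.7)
The plan is to establish the convergence by a standard sandwich argument: prove $\limsup_{\parambr\to(0,0)} j_\param(\bar u_\param) \leq j(\bar u)$ and $\liminf_{\parambr\to(0,0)} j_\param(\bar u_\param) \geq j(\bar u)$.

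For the $\limsup$ bound, the main difficulty is that $\bar u \in \BV$ is in general not in $H^1(\Omega)$, so for $\gamma>0$ we cannot simply test the optimality of $\bar u_\param$ against $\bar u$. I would resolve this via a density argument: by Lemma~\ref{lem:bvdensity} applied with $r=s$, pick $u_n \in C^\infty(\bar\Omega) \subset H^1(\Omega)$ with $\dBVr[s](u_n,\bar u) \to 0$. For each fixed $n$, optimality of $\bar u_\param$ gives
\begin{equation*}
j_\param(\bar u_\param) \leq j_\param(u_n) = \tfrac{1}{2}\Norm[\LLL]{Su_n-y_\Omega}^2 + \beta\psi_\delta(u_n) + \tfrac{\gamma}{2}\Norm[H^1(\Omega)]{u_n}^2.
\end{equation*}
Letting $\parambr\to(0,0)$ with $n$ fixed, the last term vanishes since $\Norm[H^1(\Omega)]{u_n}$ is finite, while Lemma~\ref{lem:psidelta}(1) yields $|\psi_\delta(u_n) - |u_n|_{\BV}| \leq \sqrt{\delta}|\Omega| \to 0$. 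Hence $\limsup j_\param(\bar u_\param) \leq j(u_n)$, and by $\dBVr[s]$-continuity of $j$ from Lemma~\ref{lem:uregjcont} the right-hand side converges to $j(\bar u)$ as $n\to\infty$.

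For the $\liminf$ bound, fix any sequence $\parambrk \to (0,0)$. The already-established upper bound implies $j_\paramk(\bar u_\paramk)$ is bounded, and since $j \leq j_\param$ the coercivity of $j$ from Lemma~\ref{lem:uregjconvex} transfers, so $\{\bar u_\paramk\}$ is bounded in $\BV$. Extract a subsequence (not relabeled) with $\bar u_\paramk \rightharpoonup^\ast u^\ast$ in $\BV$ and, by the compact embedding $\BV \hookrightarrow\hookrightarrow L^s(\Omega)$, also $\bar u_\paramk \to u^\ast$ in $L^s(\Omega)$. The $L^s$-$L^2$ continuity of $S$ from Lemma~\ref{lem:solutionoperator} yields $\tfrac{1}{2}\Norm[\LLL]{S\bar u_\paramk-y_\Omega}^2 \to \tfrac{1}{2}\Norm[\LLL]{Su^\ast - y_\Omega}^2$. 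The bound $\psi_{\delta_k}(\bar u_\paramk) \geq |\bar u_\paramk|_{\BV}$ from Lemma~\ref{lem:psidelta}(1), combined with lower semicontinuity of $|\cdot|_{\BV}$ with respect to $L^1$ convergence, gives $\liminf \beta \psi_{\delta_k}(\bar u_\paramk) \geq \beta|u^\ast|_{\BV}$. Dropping the nonnegative $H^1$-term then yields $\liminf j_\paramk(\bar u_\paramk) \geq j(u^\ast) \geq j(\bar u)$ by optimality of $\bar u$.

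A standard subsequence-of-subsequence argument upgrades the convergence along sequences to convergence of the full net, completing the proof. The main obstacle is the density argument that circumvents $\bar u \notin H^1(\Omega)$; once smooth approximants are in hand, the uniform bound $0 \leq \psi_\delta - |\cdot|_{\BV} \leq \sqrt{\delta}|\Omega|$ and the $\dBVr[s]$-continuity of $j$ make the remaining steps routine.
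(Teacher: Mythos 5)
Your proof is correct, and your $\limsup$ argument is essentially the paper's: both parties approximate $\bar u$ by $C^\infty(\bar\Omega)$ functions (which lie in $H^1(\Omega)$), test the optimality of $\bar u_\param$ against these approximants, control the $\delta$-smoothing error by $\sqrt{\delta}\lvert\Omega\rvert$, let the regularization parameters vanish, and finally invoke the $\dBVr$-continuity of $j$ to pass from $j(u_n)$ to $j(\bar u)$. (The paper routes through the intermediate quantity $j_{\gamma_k,0}(\bar u_{\gamma_k,0})$, but that is a cosmetic difference; the core density idea is identical.)

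Where you genuinely diverge is the $\liminf$ bound, and there your argument is substantially heavier than needed. The paper observes in one line that $j(\bar u) \leq j(\bar u_\param) \leq j_\param(\bar u_\param)$, the first inequality by optimality of $\bar u$ for $j$ and the second because $j \leq j_\param$ pointwise (Lemma~\ref{lem:psidelta}~1.). This makes $j_\param(\bar u_\param) - j(\bar u)$ nonnegative with no further work, so only the upper bound requires any argument. Your compactness route (coercivity of $j$, $\BV$-boundedness, extraction of an $L^s$-convergent subsequence, $L^s$-$L^2$ continuity of $S$, lower semicontinuity of $\lvert\cdot\rvert_{\BV}$, optimality of $\bar u$, subsequence-of-subsequence principle) is valid, and it is the machinery that Lemma~\ref{lem_Lsconvofregulcontrols} in the paper actually employs for the convergence of the controls themselves; but for the objective values it is overkill. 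The one-line pointwise comparison is worth internalizing, as it also shortens the subsequent results of section~\ref{sec:regconvergence}.
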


\begin{proof}
	Let $\epsilon>0$ and let $(\parambrk)_{k\in\N} \subset \R_{\geq 0}^2$ converge to $(0,0)$. There holds
	\begin{equation*}
	0 \leq j_\paramk(\bar u_\paramk) - j(\bar u) = \bigl[ j_\paramk(\bar u_\paramk) - j_{\gamma_k,0}(\bar u_{\gamma_k,0}) \bigr] + \bigl[ j_{\gamma_k,0}(\bar u_{\gamma_k,0}) - j(\bar u) \bigr],
	\end{equation*}
	where we used $j(\bar u) \leq j(\bar u_\paramk) \leq j_\paramk(\bar u_\paramk)$. The first term in brackets satisfies
	\begin{align*}
	j_\paramk(\bar u_\paramk) - j_{\gamma_k, 0}(\bar u_{\gamma_k,0}) & \leq j_{\gamma_k, \delta_k}(\bar u_{\gamma_k, 0}) - j_{\gamma_k, 0}(\bar u_{\gamma_k,0})\\
	& = \beta \psi_{\delta_k}(\bar u_{\gamma_k, 0}) - \beta |\bar u_{\gamma_k, 0}|_{\BV} \leq \beta \sqrt{\delta_k} |\Omega|,
	\end{align*}
	where the last inequality follows from Lemma~\ref{lem:psidelta}.
	For the second term in brackets we deduce from Lemma~\ref{lem:bvdensity} and the $d_{BV,s}$ continuity of $j$ established in Lemma~\ref{lem:uregjcont} that there is $u_\epsilon \in C^\infty(\bar\Omega)$ such that $|j(\bar u)-j(u_\epsilon)| < \epsilon$. This yields 
	\begin{equation*}
		\begin{split}
	j_{\gamma_k,0}(\bar u_{\gamma_k, 0}) - j(\bar u) 
	& \leq j_{\gamma_k,0}(u_\epsilon) - j(\bar u)\\
	& = j(u_\epsilon) + \frac{\gamma_k}{2} ||u_\epsilon||^2_{H^1(\Omega)} - j(\bar u) \leq \epsilon + \frac{\gamma_k}{2} ||u_\epsilon||^2_{H^1(\Omega)}.
		\end{split}
	\end{equation*}
	Putting the estimates for the two terms together shows
	\begin{equation*}
	|j_\paramk(\bar u_\paramk) - j(\bar u)| \leq \beta \sqrt{\delta_k} |\Omega| + \epsilon + \frac{\gamma_k}{2} \lVert u_\epsilon \rVert^2_{H^1(\Omega)}.
	\end{equation*}
	For $k\rightarrow\infty$ this implies the claim since $\epsilon>0$ was arbitrary and
	\begin{equation*}
	0 \leq \liminf_{k\rightarrow\infty} |j_\paramk(\bar u_\paramk) - j(\bar u)| \leq \limsup_{k\rightarrow\infty} |j_\paramk(\bar u_\paramk) - j(\bar u)|  \leq \epsilon.
	\end{equation*}
	\qed
\end{proof}

We infer that the optimal controls $\bar u_{\param}$ converge to $\bar u$ in $L^r$ for suitable $r$.

\begin{lemma}\label{lem_Lsconvofregulcontrols}
	For any	$r\in[1,\frac{N}{N-1})$ we have $|| \bar u_\param - \bar u||_{L^r(\Omega)} \xrightarrow{\parambr \rightarrow (0,0)} 0$.
\end{lemma}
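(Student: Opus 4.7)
The plan is to combine the objective-value convergence from Lemma~\ref{lem:vcont00} with coercivity in $\BV$, compactness, and the lower semicontinuity of the BV seminorm, and then upgrade subsequential convergence to full convergence via a subsequence--subsequence argument.

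First I would establish uniform $\BV$-boundedness of the family $\{\bar u_\param\}$. Since Lemma~\ref{lem:psidelta} gives $\psi_\delta\geq |\cdot|_{\BV}$ and the $H^1$-regularizer is nonnegative, one has the pointwise domination $j(u)\leq j_\param(u)$ for every $\param\geq 0$ and every $u\in\BV$. Evaluating at $\bar u_\param$ and invoking Lemma~\ref{lem:vcont00} shows that $j(\bar u_\param)$ stays bounded as $\parambr\to(0,0)$, and the coercivity of $j$ with respect to $\norm[\BV]{\cdot}$ (Lemma~\ref{lem:uregjconvex}) then confines $\bar u_\param$ to a bounded subset of $\BV$.

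Next I would fix $r\in[1,\frac{N}{N-1})$; without loss of generality I may assume $r\geq s$, the smaller values being covered by $L^s(\Omega)\hookrightarrow L^r(\Omega)$ on the bounded domain $\Omega$. Along an arbitrary sequence $\parambrk\to(0,0)$ the compact embedding $\BV\hookrightarrow\hookrightarrow L^r(\Omega)$ lets me pass to a subsequence (not relabelled) along which $\bar u_\paramk\to u^\ast$ in $L^r(\Omega)$ for some $u^\ast\in\BV$, hence also in $L^s(\Omega)$ and $L^1(\Omega)$. I would then pass to the limit inferior in $j_\paramk(\bar u_\paramk)\to j(\bar u)$: the fidelity term converges to $\tfrac{1}{2}\norm[\LLL]{Su^\ast-y_\Omega}^2$ by the $L^s$--$L^2$ continuity of $S$ (Lemma~\ref{lem:solutionoperator}); the bound $\psi_{\delta_k}(\bar u_\paramk)\geq |\bar u_\paramk|_{\BV}$ together with the classical $L^1$ lower semicontinuity of $|\cdot|_{\BV}$ (immediate from its duality definition recalled in section~\ref{sec:prelim}) yields $\liminf_k \beta\psi_{\delta_k}(\bar u_\paramk)\geq \beta|u^\ast|_{\BV}$; and the $H^1$-term, being nonnegative, can simply be dropped. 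These combine to $j(u^\ast)\leq\liminf_k j_\paramk(\bar u_\paramk)=j(\bar u)$, so the uniqueness of the minimizer (Theorem~\ref{thm_originalproblemhasuniqueglobalsolution}) forces $u^\ast=\bar u$. A standard subsequence--subsequence argument then upgrades this to convergence of the full family.

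The mildly delicate point I foresee is that for $\gamma_k=0$ one has $\bar u_\paramk\in\BV$ but not necessarily in $\Hone$, so the $H^1$-regularizer is not available for obtaining bounds. This poses no difficulty because the whole argument relies only on the one-sided estimate $j\leq j_\param$ and on $\psi_\delta\geq |\cdot|_{\BV}$, so no coupling between the rates $\gamma_k\to 0$ and $\delta_k\to 0$ is needed.
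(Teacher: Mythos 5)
Your proposal is correct and follows essentially the same route as the paper's proof: uniform BV-boundedness from $j\leq j_\param$ plus coercivity, compactness into $L^r$, lower semicontinuity to identify the limit with $\bar u$, and a subsequence argument for full convergence. The only cosmetic difference is that the paper derives boundedness of $j(\bar u_\paramk)$ from the direct chain $j(\bar u_\paramk)\leq j_\paramk(\bar u_\paramk)\leq j_\paramk(0)\leq j_{C,C}(0)$ rather than from Lemma~\ref{lem:vcont00}, but both are valid.
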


\begin{proof}
	Let $(\parambrk)_{k\in\N} \subset \R_{\geq 0}^2$ converge to $(0,0)$. Let $C > 0$ be so large that $\gamma_k,\delta_k\leq C$ for all $k$. The optimality of $\bar u_\paramk$ and Lemma~\ref{lem:psidelta} yield for each $k\in\N$ 
	\begin{equation*}
	j(\bar u_\paramk) \leq j_\paramk(\bar u_\paramk) \leq j_\paramk(0) \leq j_{C,C}(0).
	\end{equation*}
	As $(j(\bar u_\paramk))_{k\in\N}$ is bounded, we obtain from Lemma~\ref{lem:uregjcont} that $(\norm[\BV]{\bar u_\paramk})_{k\in\N}$ is bounded, too. 
	The compact embedding of $\BV$ into $L^{r}(\Omega)$, $r\in[1,\frac{N}{N-1})$ thus implies that there exists $\tilde u\in L^r(\Omega)$ such that a subsequence of $\left( \bar u_\paramk \right)_{k\in\N}$, denoted in the same way, converges to $\tilde u$ in $L^r(\Omega)$. It is therefore enough to show $\tilde u=\bar u$.
	Since $j$ is lower semi-continuous in the $L^s$ topology, cf. Lemma~\ref{lem:psidelta} and continuity and convexity of the other terms,
	we have
	\begin{equation*}
	j(\tilde u) \leq \liminf_{k\rightarrow\infty} j(\bar u_\paramk) \leq \liminf_{k\rightarrow\infty} j_\paramk(\bar u_\paramk)
	= j(\bar u),
	\end{equation*}
	where we used Lemma~\ref{lem:vcont00} to derive the equality. 
	This shows $\tilde u\in\BV$, hence Theorem~\ref{thm_originalproblemhasuniqueglobalsolution} implies $\tilde u = \bar u$. \qed 
\end{proof}

In fact, the convergence of $\bar u_{\param}$ to $\bar u$ is stronger.

\begin{theorem}\label{thm_convbaru}
	For any	$r\in[1,\frac{N}{N-1})$ we have $d_{BV,r}(\bar u_{\gamma,\delta}, \bar u) \xrightarrow{\parambr\rightarrow (0,0)} 0.$
\end{theorem}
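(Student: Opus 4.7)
The plan is to reduce everything to the $L^r$-convergence already established in Lemma~\ref{lem_Lsconvofregulcontrols} together with a separate convergence statement for the BV seminorms. Since
\[
d_{BV,r}(\bar u_{\gamma,\delta}, \bar u) = \Norm[L^r(\Omega)]{\bar u_{\gamma,\delta} - \bar u} + \bigl| |\bar u_{\gamma,\delta}|_{\BV} - |\bar u|_{\BV} \bigr|
\]
and the first summand tends to zero by Lemma~\ref{lem_Lsconvofregulcontrols}, the entire task reduces to showing $|\bar u_{\gamma,\delta}|_{\BV} \to |\bar u|_{\BV}$ as $\parambr \to (0,0)$.

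First, I would prove the objective values converge, $j(\bar u_{\gamma,\delta}) \to j(\bar u)$. The upper bound $j(\bar u_{\gamma,\delta}) \leq j_\param(\bar u_{\gamma,\delta}) \to j(\bar u)$ is immediate from Lemma~\ref{lem:psidelta}~1., nonnegativity of the $H^1$ regularizer, and Lemma~\ref{lem:vcont00}. The matching lower bound uses $L^s$-lower semi-continuity of $j$: the tracking term $\tfrac12\Norm[\LLL]{S\cdot - y_\Omega}^2$ is even $L^s$-continuous by the $L^s$-$L^2$ continuity of $S$ (Lemma~\ref{lem:solutionoperator}), and $|\cdot|_{\BV}$ is $L^1$- (hence $L^s$-) lower semi-continuous by Lemma~\ref{lem:psidelta}~2.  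Since Lemma~\ref{lem_Lsconvofregulcontrols} applied with $r=s$ gives $\bar u_{\gamma,\delta} \to \bar u$ in $L^s(\Omega)$, we conclude $j(\bar u) \leq \liminf j(\bar u_{\gamma,\delta})$, and sandwiching yields the claimed convergence of objective values.

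Second, the tracking term satisfies $\tfrac12\Norm[\LLL]{S\bar u_{\gamma,\delta} - y_\Omega}^2 \to \tfrac12\Norm[\LLL]{S\bar u - y_\Omega}^2$ along the same $L^s$ convergence (again by Lemma~\ref{lem:solutionoperator}). Subtracting this limit from $j(\bar u_{\gamma,\delta}) \to j(\bar u)$ and dividing by $\beta > 0$ gives $|\bar u_{\gamma,\delta}|_{\BV} \to |\bar u|_{\BV}$, which combined with the $L^r$-convergence closes the argument.

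The only technical nuisance is that the convergence is stated for the directed limit $\parambr\to(0,0)$ rather than for individual sequences; this is handled exactly as in the proof of Lemma~\ref{lem_Lsconvofregulcontrols} via the standard subsequence-of-subsequence trick, the limit $|\bar u|_{\BV}$ being independent of the chosen sequence. I do not anticipate a significant obstacle here, as all the ingredients (strict-type lower semi-continuity of $j$, compact embedding of $\BV$, and Lemma~\ref{lem:vcont00}) are already in place.
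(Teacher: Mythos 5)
Your proof is correct and follows essentially the same route as the paper's: reduce to $L^r$-convergence (Lemma~\ref{lem_Lsconvofregulcontrols}) plus convergence of the BV seminorms, obtain the latter by showing $j(\bar u_\param)\to j(\bar u)$ together with convergence of the tracking term, and divide by $\beta$. The only (cosmetic) divergence is that you establish $j(\bar u)\le\liminf j(\bar u_\param)$ via $L^s$-lower semicontinuity, whereas the paper simply invokes $j(\bar u)\le j(\bar u_\param)$, which is immediate from the global optimality of $\bar u$ for $j$ and makes the detour through lower semicontinuity unnecessary.
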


\begin{proof}
	For any $\param\geq 0$ we have $j(\bar u) \leq j(\bar u_\param) \leq j_\param(\bar u_\param)$, so Lemma~\ref{lem:vcont00} yields $\lim_{\parambr\rightarrow (0,0)} j(\bar u_\param) = j(\bar u)$. Furthermore, there holds
	\begin{equation*}
	\begin{split}
	\beta \Bigl| |\bar u|_{\BV} - |\bar u_\param|_{\BV} \Bigr|  & \leq \bigl| j(\bar u) - j(\bar u_\param) \bigr|\\ 
	& \quad + \frac12 \left| \lVert S\bar u-y_\Omega \rVert^2_{L^2(\Omega)} - \lVert S\bar u_\param - y_\Omega \rVert^2_{L^2(\Omega)} \right|.
	\end{split}
	\end{equation*}
	By Lemma~\ref{lem_Lsconvofregulcontrols} and the continuity of $S$ from $L^s(\Omega)$ to $L^2(\Omega)$ we thus find
	\begin{equation*}
	|\bar u_\param|_{\BV} \xrightarrow{\parambr\rightarrow (0,0)} |\bar u|_{\BV}.
	\end{equation*}
	Together with Lemma~\ref{lem_Lsconvofregulcontrols} this proves the claim. \qed 
\end{proof}

We conclude this section with the convergence of $(\bar y_{\param}, \bar p_\param)$ to $(\bar y,\bar p)$.

\begin{theorem} \label{thm:barybarpconv}
	For any	$r\in [1,\frac{N}{N-1})$ and any $r^\prime\in[1,\infty)$ we have
	\begin{equation*}
		\lim_{\parambr\rightarrow (0,0)} \lVert \bar y_\param - \bar y \rVert_{W^{2,r}(\Omega)} = \lim_{\parambr\rightarrow (0,0)} \lVert \bar p_\param - \bar p \rVert_{W^{2,r^\prime}(\Omega)} = 0.
	\end{equation*}
\end{theorem}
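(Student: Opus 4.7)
The plan is to transfer convergence of the controls (Lemma~\ref{lem_Lsconvofregulcontrols}/Theorem~\ref{thm_convbaru}) to the states via the $L^q$--$W^{2,q}$ estimate from Lemma~\ref{lem:solutionoperator}, and then repeat the argument once more for the adjoint states using $S^\ast = S$.

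For the first convergence statement, I would fix $r \in (1, \frac{N}{N-1})$ and write $\bar y_\param - \bar y = S(\bar u_\param - \bar u)$. Since the right-hand side lives in $L^r(\Omega)$, Lemma~\ref{lem:solutionoperator} yields a constant $C>0$ independent of $\param$ such that
\begin{equation*}
\lVert \bar y_\param - \bar y \rVert_{W^{2,r}(\Omega)} \leq C \lVert \bar u_\param - \bar u \rVert_{L^r(\Omega)},
\end{equation*}
and the right-hand side vanishes as $\parambr\to(0,0)$ by Lemma~\ref{lem_Lsconvofregulcontrols}. The remaining case $r=1$ in the theorem follows by the continuous embedding $W^{2,r}(\Omega)\hookrightarrow W^{2,1}(\Omega)$ for any $r>1$.

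For the second convergence statement, the key intermediate step is to upgrade the previous convergence to $L^{r'}(\Omega)$ convergence of $\bar y_\param$ for every $r'\in[1,\infty)$. This follows from Sobolev embeddings of $W^{2,r}(\Omega)$: for $N=1$ the space $W^{2,r_N}(\Omega)$ with $r_N$ arbitrarily large embeds into $L^\infty(\Omega)$; for $N=2$, any $r\in(1,2)$ gives $W^{2,r}(\Omega)\hookrightarrow C(\bar\Omega)$ because $2-N/r>0$; for $N=3$ we use instead $W^{2,r}(\Omega)\hookrightarrow L^{\frac{Nr}{N-2r}}(\Omega)$ for $r$ close to $\frac{N}{N-1}=\frac{3}{2}$, and observe that the target exponent tends to $\infty$ as $r\nearrow \frac{3}{2}$, so for any prescribed $r'<\infty$ a suitable $r<\frac{N}{N-1}$ can be chosen. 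Combined with the $W^{2,r}$-convergence from the first step this yields $\lVert \bar y_\param - \bar y\rVert_{L^{r'}(\Omega)}\to 0$ for every $r'\in[1,\infty)$.

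Now fix $r'\in(1,\infty)$. Using $S^\ast=S$ (as observed below Theorem~\ref{thm_originalproblemhasuniqueglobalsolution}) together with Lemma~\ref{lem:solutionoperator} applied to $\bar p_\param - \bar p = S^\ast(\bar y_\param - \bar y)$, we obtain
\begin{equation*}
\lVert \bar p_\param - \bar p \rVert_{W^{2,r'}(\Omega)} \leq C \lVert \bar y_\param - \bar y \rVert_{L^{r'}(\Omega)} \xrightarrow{\parambr\to(0,0)} 0,
\end{equation*}
and again $r'=1$ is handled by the trivial embedding $W^{2,r'}(\Omega)\hookrightarrow W^{2,1}(\Omega)$. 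The only potential obstacle is the Sobolev bookkeeping in dimension $N=3$, where the native regularity $W^{2,r}$ of $\bar y_\param$ does not embed into $L^\infty(\Omega)$; but since $\frac{Nr}{N-2r}$ can be made arbitrarily large by choosing $r$ sufficiently close to $\frac{N}{N-1}$, this causes no real difficulty, and no new estimates beyond those already cited are required.
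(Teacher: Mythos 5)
Your proof is correct and follows essentially the same route as the paper: pass the $L^r$-convergence of the controls through the $L^q$-to-$W^{2,q}$ estimate of Lemma~\ref{lem:solutionoperator} to get the state convergence, then upgrade via Sobolev embedding and apply $S^\ast=S$ once more for the adjoints. Your additional bookkeeping (the case $r=1$ and the dimension-by-dimension Sobolev exponents) only spells out details the paper leaves implicit.
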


\begin{proof}
	The continuity of $S$ from $L^q$ to $W^{2,q}$ for any $q\in(1,\infty)$, 
	see Lemma~\ref{lem:solutionoperator}, 
	implies with Lemma~\ref{lem_Lsconvofregulcontrols} that $\lim_{\parambr\rightarrow (0,0)} \lVert \bar y_\param - \bar y \rVert_{W^{2,r}(\Omega)} = 0$ for any $r\in[1,\frac{N}{N-1})$.
	Since for any $r^\prime\in(1,\infty)$ there is $r\in [1,\frac{N}{N-1})$ such that $W^{2,r}(\Omega)\hookrightarrow L^{r^\prime}(\Omega)$ is satisfied, we can use the $L^{r^\prime}$-$W^{2,r^\prime}$ continuity of $S^\ast = S$ to find
	$\lim_{\parambr\rightarrow (0,0)} \lVert \bar p_\param - \bar p \rVert_{W^{2,r^\prime}(\Omega)}=\lim_{\parambr\rightarrow (0,0)} \lVert S^\ast(\bar y_\param-\bar y) \rVert_{W^{2,r^\prime}(\Omega)}=0$.	
	\qed 
\end{proof}

\begin{remark}
	The results of section~\ref{sec:origandregulproblems} can also be established for nonsmooth domains $\Omega$, but 
	$\bar y,\bar p,\bar y_\param,\bar p_\param$ may be less regular since $S$ may not provide the regularity stated in Lemma~\ref{lem:solutionoperator}. 
	A careful inspection reveals that only Theorem~\ref{thm:barybarpconv} has to be modified. 
	If, for instance, $\Omega\subset\R^N$, $N\in\{2,3\}$, is 
	a bounded Lipschitz domain, then \cite[Theorem~3]{Savare98} implies
	that Theorem~\ref{thm:barybarpconv} holds if $W^{2,r}$ and $W^{2,r^\prime}$ are both replaced by $H^r$, where $r\in[1,\frac32)$ is arbitrary.
	If $\Omega$ is convex, then \cite[Theorem~3.2.1.2]{grisvard} further yields that $W^{2,r^\prime}$ can be replaced by $H^2$.
\end{remark}

\section{Differentiability of the adjoint-to-control mapping}\label{sec_regularity}

The main goal of this section is to show that the PDE
\begin{equation} \label{eq:quasilinearpde}
\left\{
\begin{aligned}
-\divg\left(\left[\gamma+\frac{\beta}{\sqrt{\delta+|\nabla u|^2}}\right]\nabla u\right)+\gamma u & = p &&\text{in }\Omega,\\
\left( \left[ \gamma + \frac{\beta}{\sqrt{\delta + |\nabla u|^2}} \right] \nabla u, \nu \right) & = 0 &&\text{on } \partial\Omega
\end{aligned}
\right.
\end{equation}
has a unique weak solution $u=u(p)\in C^{1,\alpha}(\Omega)$ 
for every right-hand side $p\in L^\infty(\Omega)$, and that $p\mapsto u(p)$ is Lipschitz continuously \Frechet differentiable in any open ball, where the Lipschitz constant is independent of $\gamma$ and $\delta$ provided $\gamma>0$ and $\delta>0$ are bounded away from zero. This is accomplished in Theorem~\ref{thm:ullfd}. 
Note that we suppress the dependency on $\param$ in $u=u(p;\param)$.

\begin{assumption}\label{ass6}
	We are given constants $0 < \gamma_0 \leq \gamma^0$, $0 < \delta_0 \leq \delta^0$ and $b^0 > 0$. 
	We denote $I:=[\gamma_0,\gamma^0]\times [\delta_0, \delta^0]$ and write
	$\Ball\subset\Linf$ for the open ball of radius $b^0>0$ centered at the origin in $\Linf$.
\end{assumption}

We first show that $p\mapsto u(p)$ is well-defined and satisfies a Lipschitz estimate.

\begin{lemma}\label{lem_Hoeldercontinuityofregmeancurvatureequation}
	Let Assumption~\ref{ass6} hold.
	Then there exist $L>0$ and $\alpha\in (0,1)$ such that for each $(\gamma,\delta)\in I$ and all $p_1, p_2 \in \Ball$ the PDE \eqref{eq:quasilinearpde} has 	
	unique weak solutions $u_1=u_1(p_1)\in C^{1,\alpha}(\Omega)$
	and $u_2 = u_2(p_2)\in C^{1,\alpha}(\Omega)$ that satisfy
	\begin{equation*}
	\Norm[C^{1,\alpha}(\Omega)]{u_1-u_2} \leq L \Norm[\Linf]{p_1-p_2}.
	\end{equation*}
	In particular, we have the stability estimate
	\begin{equation*}
	\Norm[C^{1,\alpha}(\Omega)]{u_1} \leq L\Norm[\Linf]{p_1}.
	\end{equation*}
\end{lemma}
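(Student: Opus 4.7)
The plan is to first establish existence and uniqueness of weak solutions by a variational argument, then invoke the appendix H\"older regularity result for quasilinear PDEs to upgrade each $u(p)$ to $C^{1,\alpha}(\Omega)$ with a bound uniform in $(\gamma,\delta)\in I$ and $p\in\Ball$, and finally derive the Lipschitz estimate by rewriting $u_1-u_2$ as the solution of a linear Neumann problem and applying Schauder theory. Existence and uniqueness follow because \eqref{eq:quasilinearpde} is the Euler-Lagrange equation of the strictly convex, coercive, continuous functional
\begin{equation*}
F_{\param,p}(u)=\tfrac{\gamma}{2}\|u\|_{\Hone}^2+\beta\psi_\delta(u)-\int_\Omega p\,u\dx,\qquad u\in\Hone,
\end{equation*}
whose strict convexity and coercivity are secured by $\gamma\geq\gamma_0>0$; the direct method then supplies the unique minimiser $u(p)\in\Hone$, which is the unique weak solution. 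Writing $a_{\param}(\xi):=\bigl[\gamma+\beta/\sqrt{\delta+|\xi|^2}\bigr]\xi$, a direct computation shows that the Jacobian $D_\xi a_{\param}(\xi)$ has eigenvalues in $[\gamma_0,\gamma^0+\beta/\sqrt{\delta_0}]$, so \eqref{eq:quasilinearpde} is uniformly elliptic over $I$. Combined with $p\in\Ball\subset\Linf$ and $\partial\Omega\in C^{1,1}$, the appendix H\"older estimate then yields $u(p)\in C^{1,\alpha}(\Omega)$ together with $\|u(p)\|_{C^{1,\alpha}(\Omega)}\leq C_1$ for some $\alpha\in(0,1)$ and some $C_1>0$ independent of $(\gamma,\delta)\in I$ and $p\in\Ball$.

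For the Lipschitz estimate, I would subtract the equations for $u_1$ and $u_2$ and apply the fundamental theorem of calculus to observe that $w:=u_1-u_2$ solves the linear Neumann problem
\begin{equation*}
-\divg(M\nabla w)+\gamma w=p_1-p_2\text{ in }\Omega,\qquad M\nabla w\cdot\nu=0\text{ on }\partial\Omega,
\end{equation*}
with coefficient $M(x):=\int_0^1 D_\xi a_{\param}\bigl(t\nabla u_1(x)+(1-t)\nabla u_2(x)\bigr)\dt$. By the Jacobian bounds, $M$ is symmetric, uniformly elliptic with ellipticity constant $\geq\gamma_0$, and uniformly $\Linf$-bounded. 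Because $\delta\geq\delta_0>0$ keeps $D_\xi a_{\param}$ smooth on the compact set $\{|\xi|\leq 2C_1\}$ and $\nabla u_i\in C^{0,\alpha}(\Omega)$ with uniform bound $C_1$, chain-rule composition gives $M\in C^{0,\alpha}(\Omega)$ with a uniform H\"older norm. Schauder estimates for linear Neumann problems on the $C^{1,1}$ domain $\Omega$, together with an $\Linf$ stability bound coming from the positive zero-order term $\gamma w$ (via a maximum principle or direct energy testing using $\gamma\geq\gamma_0$), then produce
\begin{equation*}
\|u_1-u_2\|_{C^{1,\alpha}(\Omega)}\leq L\|p_1-p_2\|_{\Linf}
\end{equation*}
with $L$ uniform over $(\gamma,\delta)\in I$ and $p_1,p_2\in\Ball$. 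The stability bound for $u_1$ alone is then recovered as the special case $p_2\equiv 0$, $u_2\equiv 0$.

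The main obstacle is to secure the uniformity of $\alpha$, $C_1$, and $L$ with respect to $(\gamma,\delta)\in I$. Both lower bounds $\gamma_0>0$ and $\delta_0>0$ are indispensable: the former provides uniform ellipticity and coercivity despite the possible degeneration of $\beta/\sqrt{\delta+|\xi|^2}$ for large $|\xi|$, while the latter prevents $D_\xi a_{\param}$ from becoming singular at $\xi=0$, which would destroy the H\"older bound on $M$. The bulk of the technical work therefore consists in verifying that both the appendix H\"older estimate and the Schauder estimate are stated with constants depending only on the ellipticity bounds, the coefficient regularity, $\Omega$, $\beta$, and $b^0$, so that no constant inadvertently depends on $(\gamma,\delta)$ itself.
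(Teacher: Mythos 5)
Your strategy mirrors the paper's Theorem~\ref{thm:quasilinearls}: existence via minimization of the strongly convex functional, $C^{1,\alpha}(\Omega)$ regularity via the appendix quasilinear H\"older result, and the Lipschitz bound via linearization with coefficient $M(x)=\int_0^1 D_\xi a_{\param}\bigl(t\nabla u_1(x)+(1-t)\nabla u_2(x)\bigr)\dt$ and a linear Schauder-type estimate (the paper's Theorem~\ref{thm:linearregularity}). Your eigenvalue computation for $D_\xi a_{\param}$ is also correct, and your remark on why $\gamma_0,\delta_0>0$ are indispensable is accurate.

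There is, however, a genuine gap: before Theorem~\ref{thm:lieberalpha} can be applied, you must establish a uniform $L^\infty(\Omega)$ bound on $u(p)$. The constants in that theorem depend on a number $M$ with $|u|\leq M$, and the structure conditions \eqref{eq:Lk1}--\eqref{eq:Lk3} need only hold for $u\in[-M,M]$; so without a bound on $\|u(p)\|_{L^\infty(\Omega)}$ that is uniform in $(\gamma,\delta)\in I$ and $p\in\Ball$, your constant $C_1$ is not secured. Uniform ellipticity alone does not supply this: the variational argument yields only an $H^1(\Omega)$ bound, and $H^1(\Omega)\not\hookrightarrow L^\infty(\Omega)$ for $N\geq 2$. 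The paper devotes Part~2 of the proof of Theorem~\ref{thm:quasilinearls} to closing exactly this gap via a Moser-type iteration: testing \eqref{eq:quasilinearpde} with $u_{i,M}^{2q-1}$, where $u_{i,M}$ is a truncation of $u_i$, discarding the nonnegative elliptic contributions, and letting $q\to\infty$ to obtain $\|u(p)\|_{L^\infty(\Omega)}\leq\gamma_0^{-1}b^0$, a bound independent of $(\gamma,\delta)$ and $p$. Once you supply this step, the remaining verifications you describe (H\"older bounds for $M$ via compositions, and the stability estimate from $p_2\equiv 0$, which gives $u(p_2)=0$) go through as outlined and coincide with the paper's Parts~3 and~4.
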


\begin{proof}
	Unique existence and the first estimate are established in Theorem~\ref{thm:quasilinearls} in appendix~\ref{sec_HoeldercontinuityforquasilinPDEs}.
	The second estimate follows from the first for $p_2=0$. \qed 
\end{proof}

We introduce the function
\begin{equation*}
f:\R^N \rightarrow\R^N, \qquad f(v) := \beta \frac{v}{\sqrt{\delta+|v|^2}},
\end{equation*}
so that \eqref{eq:quasilinearpde} becomes  
\begin{equation} \label{eq:quasilinearpdewithf}
- \divg \Bigl( \gamma \nabla u + f(\nabla u) \Bigr) + \gamma u = p \quad\text{ in } H^1(\Omega)^\ast.
\end{equation}	

The following two results prove that the adjoint-to-control mapping is differentiable and has a locally Lipschitz continuous derivative whose Lipschitz constant is bounded on bounded sets uniformly in $I$.

\begin{theorem} \label{thm_PtoUfrechet}
	Let Assumption~\ref{ass6} hold and let $\alpha \in (0,1)$ be the constant from Lemma~\ref{lem_Hoeldercontinuityofregmeancurvatureequation}. 	
	For each $\parambr\in I$ the mapping $\Ball \ni p\mapsto u(p) \in C^{1,\alpha}(\Omega)$ is \Frechet differentiable. Its derivative $z=u'(p)d\in C^{1,\alpha}(\Omega)$ in direction $d \in L^\infty(\Omega)$ is the unique weak solution of the linear PDE
	\begin{equation} \label{eq:uprimedef}
	\left\{
	\begin{aligned}
	- \divg \Biggl( \Bigl[\gamma I+ f^{\prime}\bigl(\nabla u(p)\bigr)\Bigr] \nabla z \Biggr) + \gamma z &= d && \text{ in }\Omega, \\
	\Biggl( \Bigl[ \gamma I + f^\prime\bigl(\nabla u(p)\bigr) \Bigr] \nabla z, \nu \Biggr) & = 0 && \text{ on } \partial\Omega,
	\end{aligned}\right.
	\end{equation}
	and there exists $C>0$ such that for all $\parambr\in I$, all $p\in\Ball$, and all $d\in L^\infty(\Omega)$ we have 
	\begin{equation*}
		\Norm[C^{1,\alpha}(\Omega)]{z} \leq C\Norm[L^\infty(\Omega)]{d}.
	\end{equation*}
\end{theorem}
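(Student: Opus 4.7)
The plan is to define $z$ directly as the weak solution of the prescribed linearized Neumann problem \eqref{eq:uprimedef}, so that the formula for the derivative is built in, and then to verify that this $z$ really is the \Frechet derivative of $p\mapsto u(p)$ by controlling a Taylor remainder in the $C^{1,\alpha}$ norm. As a preparation I would first record the key properties of the matrix $A(p) := \gamma I + f^\prime(\nabla u(p))$. A direct computation gives $f^\prime(v) = \beta(\delta+|v|^2)^{-3/2}\bigl[(\delta+|v|^2)I - vv^T\bigr]$, which is symmetric and positive definite with eigenvalues $\beta\delta(\delta+|v|^2)^{-3/2}$ and $\beta(\delta+|v|^2)^{-1/2}$, so $A(p)$ is symmetric and uniformly elliptic with smallest eigenvalue at least $\gamma_0$. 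Since $f$ is smooth on $\R^N$ thanks to $\delta \geq \delta_0 > 0$, and since $\|\nabla u(p)\|_{C^{0,\alpha}}$ is bounded on $\Ball$ by Lemma~\ref{lem_Hoeldercontinuityofregmeancurvatureequation}, $A(p)\in C^{0,\alpha}(\overline\Omega)$ with a Hölder norm that is bounded uniformly in $p\in\Ball$ and $\parambr\in I$.

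Unique existence of $z\in H^1(\Omega)$ for any $d\in L^\infty(\Omega)$ then follows from Lax--Milgram, since the bilinear form $(w,v)\mapsto \int_\Omega A(p)\nabla w\cdot\nabla v + \gamma w v\dx$ is coercive with constant bounded below by $\gamma_0$. To promote $z$ to $C^{1,\alpha}(\overline\Omega)$ together with the claimed estimate $\|z\|_{C^{1,\alpha}}\leq C\|d\|_{L^\infty}$, I would invoke boundary Schauder regularity for divergence-form Neumann problems on the $C^{1,1}$ domain $\Omega$ with Hölder coefficients and $L^\infty$ right-hand side; the constant $C$ depends only on the uniform bounds from the previous paragraph and on $\Omega$, and is therefore independent of $(p,\parambr)\in\Ball\times I$.

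For \Frechet differentiability, fix $p\in\Ball$ and take $d\in L^\infty(\Omega)$ so small that $p+d\in\Ball$. Put $\Delta u:=u(p+d)-u(p)$ and $r:=\Delta u-z$. Expanding $f$ by Taylor's theorem around $\nabla u(p)$ yields
\begin{equation*}
f(\nabla u(p+d)) - f(\nabla u(p)) = f^\prime(\nabla u(p))\nabla \Delta u + R,
\end{equation*}
with remainder $R = \int_0^1 (1-t)\, f^{\prime\prime}(\nabla u(p)+t\nabla\Delta u)(\nabla\Delta u,\nabla\Delta u)\dt$. Subtracting the weak forms of \eqref{eq:quasilinearpdewithf} at $p+d$ and at $p$, and then subtracting the defining weak equation for $z$, shows that $r$ solves
\begin{equation*}
\int_\Omega A(p)\nabla r\cdot\nabla v + \gamma r v\dx = -\int_\Omega R\cdot\nabla v\dx \qquad \forall v\in H^1(\Omega).
\end{equation*}
Since $f^{\prime\prime}$ is bounded uniformly on the $C^0$-range of $\nabla u(q)$ for $q\in\Ball$, and since $\|\Delta u\|_{C^{1,\alpha}}\leq L\|d\|_{L^\infty}$ by Lemma~\ref{lem_Hoeldercontinuityofregmeancurvatureequation}, standard product and composition estimates in $C^{0,\alpha}$ yield $\|R\|_{C^{0,\alpha}}\leq C\|\nabla\Delta u\|_{C^{0,\alpha}}^2\leq CL^2\|d\|_{L^\infty}^2$. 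A uniform regularity estimate for the linearized Neumann problem with divergence-form source then produces $\|r\|_{C^{1,\alpha}}\leq C\|d\|_{L^\infty}^2 = o(\|d\|_{L^\infty})$, so $u^\prime(p)d = z$ and \Frechet differentiability is established; the uniform stability bound on $u^\prime(p)d$ is already part of the second paragraph.

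The main obstacle I anticipate is making the Schauder-type $C^{1,\alpha}$ estimate for the linearized Neumann problem work with \emph{divergence-form} source data while keeping the constants uniform over $(\gamma,\delta)\in I$ and $p\in\Ball$. The cleanest route seems to be to rewrite $r$'s equation as $-\divg(A(p)\nabla r+R)+\gamma r=0$ with inhomogeneous conormal boundary datum $A(p)\nabla r\cdot\nu=-R\cdot\nu$, and to appeal to boundary Schauder theory for Neumann problems with $C^{0,\alpha}$ data; alternatively, one can freeze coefficients and proceed perturbatively as in Lemma~\ref{lem_Hoeldercontinuityofregmeancurvatureequation}. Either way, one has to verify carefully that the resulting constants depend only on $(\gamma_0,\gamma^0,\delta_0,\delta^0,b^0,\beta,\Omega)$.
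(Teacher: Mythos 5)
Your proposal follows essentially the same route as the paper's proof: define $z$ as the weak solution of \eqref{eq:uprimedef}, show that the coefficient matrix $A(p)=\gamma I+f'(\nabla u(p))$ is uniformly elliptic and $C^{0,\alpha}$-bounded uniformly in $p\in\Ball$ and $\parambr\in I$ (the paper notes $f'$ is positive semi-definite as a Hessian of a convex function, while you compute the eigenvalues explicitly — both work), then write a Taylor remainder identity for $f(\nabla u(p+d))-f(\nabla u(p))-f'(\nabla u(p))\nabla\Delta u$ and bound the resulting divergence-form source term in $C^{0,\alpha}$ by $C\|d\|_{L^\infty}^2$ using the $C^{1,\alpha}$-Lipschitz continuity from Lemma~\ref{lem_Hoeldercontinuityofregmeancurvatureequation}. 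The one place you flag as an obstacle — a uniform $C^{1,\alpha}$ estimate for the linearized Neumann problem with divergence-form data — is precisely what the paper supplies as Theorem~\ref{thm:linearregularity}, proved via Campanato-space estimates from Troianiello rather than by freezing coefficients or invoking a textbook Schauder statement; the authors even remark they could not find that exact result in the literature, so your caution about it being nontrivial was well placed. With that lemma in hand your argument closes in the same way as the paper's.
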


\begin{proof}
	Let $p\in\Ball$ and $d\in \Linf$ be such that $p+d \in\Ball$. 
	From Lemma~\ref{lem_Hoeldercontinuityofregmeancurvatureequation} we obtain
	$u(p) \in C^{1,\alpha}(\Omega)$ and $\norm[C^{1,\alpha}(\Omega)]{u(p)}\leq C\norm[L^\infty(\Omega)]{p}$, where $C$ is independent of $\gamma,\delta,p$. Combining this with Lemma~\ref{lem_hoeldercompositions} implies 
	\begin{equation} \label{eq:proof:hoeldercontinuitybeforeAestimate}
	f^\prime(\nabla u(p)) = \frac{I}{\sqrt{\delta + |\nabla u(p)|^2}} - \frac{\nabla u(p) \nabla u(p)^T}{\bigl(\delta+|\nabla u(p)|^2\bigr)^\frac{3}{2}} \in C^{0,\alpha}(\Omega, \R^{N\times N})
	\end{equation}
	and the estimate 
	$\norm[C^{0,\alpha}(\Omega)]{A}\leq a^0$ for $A:=\gamma I+f^\prime(\nabla u(p))$ with a constant $a^0$ that does not depend on $\gamma,\delta,p$.
	Since $f'(v)$, $v\in\R^N$, is the Hessian of the convex function $v\mapsto\sqrt{\delta+\lvert v\rvert^2}$, it is positive semi-definite.  
	It follows that Theorem~\ref{thm:linearregularity} is applicable. Thus, the PDE \eqref{eq:uprimedef} has a unique weak solution $z \in C^{1,\alpha}(\Omega)$ that satisfies the claimed estimate. 
	Concerning the \Frechet differentiability we obtain for $r := u(p+d)-u(p)-z \in C^{1,\alpha}(\Omega)$ 
	\begin{equation*}
		\begin{split}
			- & \divg \Biggl( \Bigl[\gamma I + f^\prime\bigl( u(p) \bigr)\Bigr] \nabla r \Biggr) + \gamma r \\
			& = - \divg \Bigl( \gamma\nabla u(p+d) \Bigr) + \gamma u(p+d) + \divg \Bigl( \gamma \nabla u(p) \Bigr) - \gamma u(p) \\
			& \enspace\;\; + \divg \Biggl( \Bigl[\gamma I + f^\prime\bigl(\nabla u(p)\bigr) \Bigr]\nabla z \Biggr) - \gamma z - \divg \Bigl( f^\prime\bigl(\nabla u(p)\bigr) w \Bigr) \\
			& = \divg \Bigl( f\bigl(\nabla u(p+d) \bigr) - f\bigl(\nabla u(p)\bigr) - f^\prime\bigl(\nabla u(p)\bigr) w \Bigr),
		\end{split}
	\end{equation*}
	where we set $w:=w(p,d):=\nabla u(p+d)-\nabla u(p)$. Theorem~\ref{thm:linearregularity} implies that there is $C>0$, independent of $d$,
	such that 
	\begin{equation*}
	\Norm[C^{1,\alpha}(\Omega)]{r} \leq C \Norm[C^{0,\alpha}(\Omega)]{f\bigl(\nabla u(p+d) \bigr) - f\bigl(\nabla u(p)\bigr) - f^\prime\bigl(u(p)\bigr) w}.
	\end{equation*}
	The expression in the norm on the right-hand side satisfies the following identity pointwise in $\Omega$
	\begin{equation*}
		\begin{split}
			f & \bigl( \nabla u(p+d) \bigr) - f\bigl(\nabla u(p)\bigr) - f^\prime\bigl(\nabla u(p)\bigr) w \\
			& = \left( \int_0^1 f^\prime\bigl( \nabla u(p) + t w \bigr) - f^\prime\bigl(\nabla u(p)\bigr) \dt \right) w \\
			& = \left( \int_0^1 \int_0^1 f^{\prime\prime}\bigl( \nabla u(p) + \tau  t w\bigr) \dtau \, t \dt\right) [w,w].
		\end{split}
	\end{equation*}
	Lemma~\ref{lem_hoeldercompositions} yields 
	\begin{equation*}
	\Norm[C^{1,\alpha}(\Omega)]{r}\leq C \int_0^1\int_0^1 \Norm[C^{0,\alpha}(\Omega)]{f^{\prime\prime}\bigl( \nabla u(p) + \tau t w \bigr)} \dtau \dt \, 
	\Norm[C^{1,\alpha}(\Omega)]{u(p+d)-u(p)}^2.
	\end{equation*}
	As $f \in C^3(\R^N,\R^N)$ with bounded derivatives we have that $f^{\prime\prime}$ is Lipschitz continuous and bounded. 
	We infer from Lemma~\ref{lem_hoeldercompositions} and Lemma~\ref{lem_Hoeldercontinuityofregmeancurvatureequation} that
	\begin{equation*}
	\Norm[C^{1,\alpha}(\Omega)]{r}\leq C \Norm[L^\infty(\Omega)]{d}^2,
	\end{equation*}
	which shows $\norm[C^{1,\alpha}(\Omega)]{r} = o(\norm[L^\infty(\Omega)]{d})$ since $C$ is independent of $d$. 
	\qed 
\end{proof}

\begin{theorem} \label{thm:ullfd}
	Let Assumption~\ref{ass6} hold and let $\alpha \in (0,1)$ be the constant from Lemma~\ref{lem_Hoeldercontinuityofregmeancurvatureequation}. 	
	Then the mapping $u^\prime: \Ball \rightarrow \Lin(L^\infty(\Omega), C^{1,\alpha}(\Omega))$ is Lipschitz continuous and the Lipschitz constant does not depend on $\parambr$, but only on $\Omega$, $N$, $\gamma_0$, $\gamma^0$, $\delta_0$, $\delta^0$ and $b^0$.
\end{theorem}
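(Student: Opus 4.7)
The plan is to differentiate the defining PDE \eqref{eq:uprimedef} with respect to $p$ by a direct subtraction argument. Fix $p_1,p_2\in\Ball$ and $d\in\Linf$, and set $z_i:=u'(p_i)d\in C^{1,\alpha}(\Omega)$, $i=1,2$. Subtracting the two instances of \eqref{eq:uprimedef} I obtain the linear Neumann problem
\begin{equation*}
\left\{
\begin{aligned}
-\divg\Bigl(\bigl[\gamma I+f'(\nabla u(p_1))\bigr]\nabla(z_1-z_2)\Bigr)+\gamma(z_1-z_2)
&= \divg\Bigl(\bigl[f'(\nabla u(p_1))-f'(\nabla u(p_2))\bigr]\nabla z_2\Bigr) && \text{in }\Omega,\\
\Bigl(\bigl[\gamma I+f'(\nabla u(p_1))\bigr]\nabla(z_1-z_2)+\bigl[f'(\nabla u(p_1))-f'(\nabla u(p_2))\bigr]\nabla z_2,\nu\Bigr)&=0 && \text{on }\partial\Omega.
\end{aligned}\right.
\end{equation*}

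The coefficient matrix on the left-hand side is $A:=\gamma I+f'(\nabla u(p_1))$. By the argument already used in the proof of Theorem~\ref{thm_PtoUfrechet} (together with \eqref{eq:proof:hoeldercontinuitybeforeAestimate}), $A$ is positive semi-definite and satisfies $\norm[C^{0,\alpha}(\Omega)]{A}\le a^0$ with $a^0$ depending only on the data and on $b^0$, $\gamma^0$, $\delta_0$. Applying Theorem~\ref{thm:linearregularity} therefore yields a constant $C>0$, independent of $\parambr\in I$ and of $p_1\in\Ball$, such that
\begin{equation*}
\Norm[C^{1,\alpha}(\Omega)]{z_1-z_2}\le C\,\Norm[C^{0,\alpha}(\Omega)]{\bigl[f'(\nabla u(p_1))-f'(\nabla u(p_2))\bigr]\nabla z_2}.
\end{equation*}

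Next I estimate the right-hand side by Lemma~\ref{lem_hoeldercompositions} (algebra property of $C^{0,\alpha}$) as
\begin{equation*}
\Norm[C^{0,\alpha}(\Omega)]{\bigl[f'(\nabla u(p_1))-f'(\nabla u(p_2))\bigr]\nabla z_2}
\le C\,\Norm[C^{0,\alpha}(\Omega)]{f'(\nabla u(p_1))-f'(\nabla u(p_2))}\,\Norm[C^{1,\alpha}(\Omega)]{z_2}.
\end{equation*}
Since $f\in C^3(\R^N,\R^N)$ with derivatives bounded uniformly on $\R^N$ for $\delta\ge\delta_0$, the map $f'$ is Lipschitz with bounded first derivative $f''$, so a second use of Lemma~\ref{lem_hoeldercompositions} together with the Hölder estimate from Lemma~\ref{lem_Hoeldercontinuityofregmeancurvatureequation} gives
\begin{equation*}
\Norm[C^{0,\alpha}(\Omega)]{f'(\nabla u(p_1))-f'(\nabla u(p_2))}\le C\,\Norm[C^{1,\alpha}(\Omega)]{u(p_1)-u(p_2)}\le CL\,\Norm[\Linf]{p_1-p_2},
\end{equation*}
where all constants depend only on the data, $\gamma_0,\gamma^0,\delta_0,\delta^0,b^0$. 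Finally, Theorem~\ref{thm_PtoUfrechet} supplies $\Norm[C^{1,\alpha}(\Omega)]{z_2}\le C\Norm[\Linf]{d}$ with the same type of uniform constant. Chaining these bounds yields
\begin{equation*}
\Norm[C^{1,\alpha}(\Omega)]{u'(p_1)d-u'(p_2)d}\le C\,\Norm[\Linf]{p_1-p_2}\,\Norm[\Linf]{d}\qquad\forall d\in\Linf,
\end{equation*}
which is exactly the claimed Lipschitz estimate in $\Lin(\Linf,C^{1,\alpha}(\Omega))$.

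The main obstacle is purely bookkeeping: ensuring that every constant arising from the composition and linear regularity estimates is uniform in $\parambr\in I$ and in $p\in\Ball$. The delicate points are (i) that $f'$ and $f''$ stay bounded precisely because $\delta\ge\delta_0>0$, (ii) that the Hölder bound on $\nabla u(p)$ stays uniform thanks to Lemma~\ref{lem_Hoeldercontinuityofregmeancurvatureequation} and the restriction $p\in\Ball$, and (iii) that the ellipticity/continuity constants entering Theorem~\ref{thm:linearregularity} are controlled from below by $\gamma_0>0$ and from above by $a^0$. Once these uniformities are invoked, the above chain of inequalities directly delivers the desired Lipschitz constant depending only on $\Omega$, $N$, $\gamma_0$, $\gamma^0$, $\delta_0$, $\delta^0$ and $b^0$.
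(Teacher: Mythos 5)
Your proposal is correct and follows essentially the same route as the paper's own proof: subtract the two instances of \eqref{eq:uprimedef}, add and subtract $f'(\nabla u(p_1))\nabla z_2$ to move all of $z_1-z_2$ under a single elliptic operator $\gamma I+f'(\nabla u(p_1))$, invoke Theorem~\ref{thm:linearregularity}, and close the estimate with Lemma~\ref{lem_hoeldercompositions}, Theorem~\ref{thm_PtoUfrechet}, and Lemma~\ref{lem_Hoeldercontinuityofregmeancurvatureequation}. The only (harmless) imprecision is writing that the coefficient matrix $A$ is ``positive semi-definite'' — what is actually needed and true is that it is uniformly elliptic with constant $\gamma\ge\gamma_0$, which you correctly note in point (iii) of the closing discussion.
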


\begin{proof}
	Let $p, q \in \Ball$ and $d \in L^\infty(\Omega)$. Set
	$z_p:=\nabla \bigl(u'(p)d\bigr)$ and $z_q:=\nabla \bigl(u'(q)d\bigr)$. Then 
	\begin{equation*}
	- \divg \Bigl( \gamma \bigl[z_p - z_q \bigr] + f^\prime\bigl( \nabla u(p) \bigr) z_p - f^\prime\bigl(\nabla u(q)\bigr) z_q \Bigr) + \gamma \bigl[u^\prime(p)d - u^\prime(q) d\bigr] = 0
	\end{equation*}
	holds in $H^1(\Omega)^\ast$.
	Thus, the difference $r := u^\prime(p)d - u^\prime(q) d$ satisfies
	\begin{align*}
	- \divg \bigl( \gamma \nabla r \bigr) + \gamma r & = \divg \Bigl( f^\prime\bigl( \nabla u(p) \bigr) z_p - f^\prime\bigl(\nabla u(q)\bigr) z_q \Bigr) \\
	& = \divg \Bigl( f^\prime\bigl( \nabla u(p) \bigr) \nabla r \Bigl) + \divg \Bigl( \bigl[ f^\prime\bigl(\nabla u(p)\bigr) - f^\prime\bigl(\nabla u(q)\bigr) \bigr] z_q \Bigr),
	\end{align*}
	from which we infer that
	\begin{equation*}
	- \divg \Bigl( \bigl[\gamma I + f^\prime\bigl( \nabla u(p) \bigr) \bigr] \nabla r \Bigr) + \gamma r = \divg \Bigl( \bigl[ f^\prime\bigl(\nabla u(p)\bigr) - f^\prime\bigl(\nabla u(q)\bigr) \bigr] z_q \Bigr)
	\end{equation*}
	in $H^1(\Omega)^\ast$. By the same arguments as below \eqref{eq:proof:hoeldercontinuitybeforeAestimate}, 
	$A := \gamma I + f^\prime\bigl( \nabla u(p) \bigr)$ satisfies $\norm[C^{0,\alpha}(\Omega)]{A} \leq a^0$ with a constant $a^0$ that does not depend on $\gamma,\delta,p,q$. Moreover, $A$ is elliptic with constant $\gamma_0$.
	By Theorem~\ref{thm:linearregularity} this yields
	\begin{equation*}
	\Norm[C^{1,\alpha}(\Omega)]{r}  
	\leq C \Norm[C^{0,\alpha}(\Omega)]{\bigl[ f^\prime\bigl(\nabla u(p)\bigr) - f^\prime\bigl(\nabla u(q)\bigr) \bigr] z_q}.
	\end{equation*}
	Here, $C>0$ does not depend on $p,q$, but only on the desired quantities.
	From Lemma~\ref{lem_hoeldercompositions} and Theorem~\ref{thm_PtoUfrechet} we infer that
	\begin{equation*}
	\Norm[C^{1,\alpha}(\Omega)]{r} 
	\leq C \Norm[C^{0,\alpha}(\Omega)]{f^\prime\bigl(\nabla u(p)\bigr) - f^\prime\bigl(\nabla u(q)\bigr)} \Norm[L^\infty(\Omega)]{d}.
	\end{equation*}
	Lemma~\ref{lem_hoeldercompositions} and Lemma~\ref{lem_Hoeldercontinuityofregmeancurvatureequation} therefore imply
	\begin{equation*}
		\begin{split}
		 & \Norm[\Lin( L^\infty(\Omega), C^{1,\alpha}(\Omega) )]{u^\prime(p) - u^\prime(q)}\\ 
		& \leq C \Norm[C^{0,\alpha}(\Omega)]{\int_0^1 f^{\prime\prime}\Bigl( \nabla u(q) + t \bigl[ \nabla u(p) - \nabla u(q) \bigr]\Bigr) \dt}\Norm[C^{0,\alpha}(\Omega)]{\nabla u(p) - \nabla u(q)}\\
		& \leq C \int_0^1 \Norm[C^{0,\alpha}(\Omega)]{f^{\prime\prime}\Bigl( \nabla u(q) + t \bigl[ \nabla u(p) - \nabla u(q) \bigr] \Bigr)}\dt \,
		\Norm[L^\infty(\Omega)]{p-q}.
		\end{split}	
	\end{equation*}
	The first factor is bounded since $f''$ is bounded and Lipschitz. This demonstrates the asserted Lipschitz continuity. \qed 
\end{proof}

\begin{remark}
	Theorem~\ref{thm:ullfd} stays valid, for some different $\alpha$, if $\Omega$ is of class $C^{1,\alpha^\prime}$ for some $\alpha^\prime>0$. 
\end{remark}

\section{An inexact Newton method for the regularized problems} \label{sec:newton}

In this section we introduce the formulation of the optimality system of \eqref{eq:regProblem} on which our numerical method is based, and we show that the application of an inexact Newton method to this formulation is globally well-defined and locally convergent.
We use the following assumption.
\begin{assumption}\label{ass8}
	We are given constants $0 < \gamma_0 \leq \gamma^0$, $0 < \delta_0 \leq \delta^0$ and $b^0 \geq 0$.
	We denote $I:=[\gamma_0,\gamma^0]\times [\delta_0, \delta^0]$ and fix $\parambr\in I$.
\end{assumption}

The optimality conditions from Theorem~\ref{thm:optcond} can be cast as  $F(\bar y_\param,\bar p_\param)=0$ for
\begin{equation} \label{eq:Fdefinition}
F: \Y\times P \rightarrow \Y^\ast \times L^2(\Omega), \qquad 
F(y,p):=\begin{pmatrix}
Ay - u(-p) \\
y - y_\Omega - A^\ast p
\end{pmatrix},
\end{equation}
and the pair $(\bar y_\param,\bar p_\param)$ is the unique root of $F$. 
Note that we suppress the dependency of 
$u=u(p;\param)$ and $F=F(y,p; \param)$ on $\param$.  
By standard Sobolev embeddings we have $P\subset H^2(\Omega)\hookrightarrow L^\infty(\Omega)$,
hence $u(-p) \in C^{1,\alpha}(\Omega)$ for some $\alpha>0$ by Lemma~\ref{lem_Hoeldercontinuityofregmeancurvatureequation}, so
$F$ is well-defined. A Newton system with a somewhat similar structure is considered in \cite{Schiela_IPMefficient}. 

To find the root of $F$ we apply the inexact Newton method Algorithm~\ref{alg:newton}. 

\begin{algorithm2e}
	\DontPrintSemicolon
	\caption{An inexact Newton method for \eqref{eq:regProblem}}\label{alg:newton} 
	\KwIn{ $(y_0, p_0) \in \Y\times P$, $(\gamma,\delta) \in\Rpos^2$, $\eta\in[0,\infty)$ } 
	\For(){$k=0,1,2,\ldots$} 
	{	
		\lIf{$F(y_k,p_k)=0$}{set $(y^\ast,p^\ast):=(y_k,p_k)$; \textbf{stop}}
		Compute $(\delta y_k, \delta p_k)$ such that $\norm{F(y_k, p_k)+F^\prime( y_k, p_k ) (\delta y_k, \delta p_k)}\leq \eta_k\norm{F(y_k, p_k)}$, where $\eta_k\in[0,\eta]$\\
		Set $(y_{k+1}, p_{k+1}) = (y_k,p_k) + (\delta y_k, \delta p_k)$
	}
	\KwOut{ $(y^\ast,p^\ast)$}
\end{algorithm2e}

The remainder of this section is devoted to the convergence analysis for Algorithm~\ref{alg:newton}. A similar analysis can be carried out if the optimality system of \eqref{eq:regProblem} and the inexact Newton method are based on $u$ instead of $(y,p)$. However, in our numerical experiments the
path-following method based on \eqref{eq:Fdefinition}, cf. section~\ref{sec:numericalsolution},
was clearly superior to its counterpart based on $u$: The former could reduce $\param$ to much smaller values than the latter
and was also significantly more robust. Both observations are well in line with our previous experience \cite{kruse,Clason2018,Hafemeyer15,HafemeyerMaster,Hafemeyer2019} on PDE-constrained optimal control problems involving the TV seminorm.

Since the homotopy path $\parambr\mapsto(\bar u_{\param},\bar y_{\param},\bar p_{\param})$ is not affected by the formulation of the optimality system, 
we conjecture that the superior performance of path-following based on \eqref{eq:Fdefinition} is related to the fact that 
$(\bar y_\param,\bar p_\param)$ converges to $(\bar y,\bar p)$ in stronger norms than $\bar u_\param$ to $\bar u$, cf. Theorem~\ref{thm:barybarpconv}, respectively, Theorem~\ref{thm_convbaru}.

There are many works in which the control is considered as an implicit variable of some sort or avoided altogether, e.g., \cite{kruse,Clason2011,HintermKunisch,HintermStadler,Hinze05,HinzeTroe,NeitzelPruefertSlawig,PieperDiss,Schiela_IPMefficient,WeiserGaenzlerSchiela}. 
Concerning the optimal triple $(\bar y,\bar p,\bar u)$ for \eqref{eq:ocintro} 
we share with those works the idea to base the optimality system on the smoother variables $(y,p)$. In contrast to those works, however, \eqref{eq:Fdefinition} does neither improve the regularity of the controls that appear as iterates (in comparison to a formulation based on $u$) nor does it circumvent their use. 

The next two lemmas yield convergence of Algorithm~\ref{alg:newton}.

\begin{lemma} \label{lem:LipschitzcontOfF}
	Let Assumption~\ref{ass8} hold. 
	Then $F$ defined in \eqref{eq:Fdefinition} is locally Lipschitz continuously \Frechet differentiable. Its derivative at $(y,p)\in Y\times P$ is given by
	\begin{equation*}
	F^\prime(y,p) \colon  \Y\times P \rightarrow \Y^\ast\times L^2(\Omega), \qquad
	(\delta y,\delta p) \mapsto \begin{pmatrix}
	A & u^\prime(-p) \\
	I & -A^\ast 
	\end{pmatrix} \begin{pmatrix}
	\delta y\\ \delta p
	\end{pmatrix}.
	\end{equation*}
\end{lemma}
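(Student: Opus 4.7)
My plan is to verify the formula for $F'$ and its local Lipschitz continuity by treating the four blocks of the Jacobian separately. Three of the four entries come from affine operators: the map $y\mapsto Ay$ is continuous linear from $Y$ to $Y^\ast$ by Lemma~\ref{lem:solutionoperator}, the map $p\mapsto -A^\ast p$ is continuous linear from $P$ to $L^2(\Omega)$ since for $p\in P\subset H^2(\Omega)$ the expression $\calA p+c_0 p$ lies in $L^2(\Omega)$, and the map $y\mapsto y$ from $Y$ to $L^2(\Omega)$ (the block denoted $I$) is the continuous embedding. These three blocks are constant in $(y,p)$, so they contribute trivially to both differentiability and Lipschitz continuity.

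The nontrivial block is the upper right entry, arising from $p\mapsto -u(-p)$. I would apply the chain rule to the composition
\begin{equation*}
P \xrightarrow{-\mathrm{id}} L^\infty(\Omega) \xrightarrow{\;u\;} C^{1,\alpha}(\Omega) \xrightarrow{\hookrightarrow} Y^\ast,
\end{equation*}
where the first arrow is continuous linear by the Sobolev embedding $P\subset H^2(\Omega)\hookrightarrow L^\infty(\Omega)$ (valid for $N\in\{1,2,3\}$), the middle map is Fréchet differentiable on the open $L^\infty$-ball $\Ball$ of radius $b^0$ by Theorem~\ref{thm_PtoUfrechet}, and the last arrow factors continuously through $L^2(\Omega)$. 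For any prescribed base point $p_0\in P$, Assumption~\ref{ass6} may be invoked with $b^0>\Norm[\Linf]{p_0}$, so the composition is Fréchet differentiable in a $P$-neighborhood of $p_0$, with derivative $(-u'(-p))\circ(-\mathrm{id})=u'(-p)$. This yields exactly the matrix form claimed for $F'(y,p)$.

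The main step is the local Lipschitz continuity of $F'$, which reduces to bounding $\Norm[\Lin(P,Y^\ast)]{u'(-p_1)-u'(-p_2)}$ for $(y_1,p_1),(y_2,p_2)$ in a $P$-bounded set. Here I would invoke Theorem~\ref{thm:ullfd}: after enlarging $b^0$ so that $-p_1,-p_2\in\Ball$, the theorem provides a constant $L>0$, depending only on $\Omega,N,\gamma_0,\gamma^0,\delta_0,\delta^0,b^0$, such that
\begin{equation*}
\Norm[\Lin(\Linf,C^{1,\alpha}(\Omega))]{u'(-p_1)-u'(-p_2)} \leq L\Norm[\Linf]{p_1-p_2}.
\end{equation*}
Pre- and post-composing with the continuous embeddings $P\hookrightarrow\Linf$ and $C^{1,\alpha}(\Omega)\hookrightarrow L^2(\Omega)\hookrightarrow Y^\ast$ absorbs constants and produces the desired Lipschitz estimate in $\Lin(P,Y^\ast)$. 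The principal obstacle is merely bookkeeping of the embedding constants and of the neighborhood in $P$ on which all iterates satisfy $-p\in\Ball$; once this is arranged, the Lipschitz constant of $F'$ on that neighborhood depends only on the quantities listed in Theorem~\ref{thm:ullfd} together with the operator norms of the embeddings.
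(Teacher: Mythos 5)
Your proof is correct and takes essentially the same approach as the paper, which dispatches the lemma in one line by noting that only $p\mapsto u(-p)$ is nonlinear and invoking Theorem~\ref{thm:ullfd} (together, implicitly, with Theorem~\ref{thm_PtoUfrechet}). You simply fill in the natural block decomposition, the chain rule through $P\hookrightarrow L^\infty(\Omega)\xrightarrow{u}C^{1,\alpha}(\Omega)\hookrightarrow Y^\ast$, and the choice of $b^0$ needed to localize around a given base point in $P$.
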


\begin{proof}
	Only $p\mapsto u(-p)$ is nonlinear, so the claims follow from Theorem~\ref{thm:ullfd}. \qed
\end{proof}

\begin{lemma} \label{lem:Fprimeinvertible}
	Let Assumption~\ref{ass8} hold. Then $F^\prime(y,p)$ is invertible for all $(y,p) \in \Y\times P$.
\end{lemma}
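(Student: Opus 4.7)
My plan is to establish invertibility of $F'(y,p)\colon Y \times P \to Y^\ast \times L^2(\Omega)$ via the Fredholm alternative, which will reduce the claim to injectivity. I would split
\[
F'(y,p) = M + K, \qquad M := \begin{pmatrix} A & 0 \\ I & -A^\ast \end{pmatrix}, \qquad K := \begin{pmatrix} 0 & u'(-p) \\ 0 & 0 \end{pmatrix},
\]
both viewed as maps $Y \times P \to Y^\ast \times L^2(\Omega)$. The operator $M$ should be an isomorphism: inverting the first row gives $\delta y = A^{-1}f_1 \in Y$ by Lemma~\ref{lem:solutionoperator}, and the second row then reduces to $A^\ast \delta p = \delta y - f_2 \in L^2(\Omega)$, which has a unique solution $\delta p \in P$ by the elliptic regularity stated in the same lemma. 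Compactness of $K$ follows from the compact embedding $P \hookrightarrow H^2(\Omega) \hookrightarrow L^\infty(\Omega)$ (valid for $N \leq 3$), the continuity $u'(-p)\colon L^\infty(\Omega) \to C^{1,\alpha}(\Omega)$ supplied by Theorem~\ref{thm_PtoUfrechet}, and the continuous embedding $C^{1,\alpha}(\Omega) \hookrightarrow L^2(\Omega) \hookrightarrow Y^\ast$. Thus $F'(y,p)$ is a compact perturbation of an isomorphism, and invertibility is equivalent to injectivity.

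For the injectivity step I would assume $F'(y,p)(\delta y, \delta p) = 0$. The symmetry $a_{ij} = a_{ji}$ yields $A^\ast = A$, so the second component gives $\delta y = A\delta p$, whence $A\delta p \in H^1_0(\Omega)$. Substituting into the first component produces $A(A\delta p) + u'(-p)\delta p = 0$ in $Y^\ast$. Testing with $\delta p \in P \subset Y$ and exploiting $A\delta p \in H^1_0(\Omega)$ together with the symmetry of $A$, an integration by parts should give
\[
\langle A(A\delta p), \delta p\rangle_{Y^\ast, Y} = \|A\delta p\|_{L^2(\Omega)}^2.
\]
Reading off the defining PDE \eqref{eq:uprimedef}, one sees that $u'(-p)$ inverts the symmetric coercive bilinear form $(z,v) \mapsto \int_\Omega [\gamma I + f'(\nabla u(-p))]\nabla z \cdot \nabla v + \gamma zv\, dx$ on $H^1(\Omega)$ (coercivity follows from $\gamma > 0$ and the fact that $f'$ is positive semi-definite as a Hessian of a convex function), hence $u'(-p)$ extends to a self-adjoint non-negative operator on $L^2(\Omega)$ and in particular $(u'(-p)\delta p, \delta p)_{L^2(\Omega)} \geq 0$. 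The identity above then forces $A\delta p = 0$, which gives $\delta p = 0$ by bijectivity of $A\colon P \to L^2(\Omega)$, and in turn $\delta y = 0$.

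The step I expect to be most delicate is the integration-by-parts identity $\langle A(A\delta p), \delta p\rangle = \|A\delta p\|_{L^2(\Omega)}^2$: for $\delta p \in P$ alone one only has $A\delta p \in L^2(\Omega)$, so applying $A$ a second time is not a priori justified, and the argument crucially uses the implicit regularity boost $A\delta p = \delta y \in H^1_0(\Omega)$ furnished by the structure of the system. If this route becomes awkward, an alternative is to recast the system as a variational problem on $P$ and apply Lax--Milgram: using $A = A^\ast$ one derives the identity $b(\delta p, \varphi) = \ell(\varphi)$ for all $\varphi \in P$, where $b(\delta p, \varphi) := (A\delta p, A\varphi)_{L^2(\Omega)} + (u'(-p)\delta p, \varphi)_{L^2(\Omega)}$ and $\ell(\varphi) := \langle f_1, \varphi\rangle_{Y^\ast, Y} - (f_2, A\varphi)_{L^2(\Omega)}$; coercivity of $b$ on $P$ then follows from $\|A\delta p\|_{L^2(\Omega)} \geq c\|\delta p\|_{H^2(\Omega)}$ (by Lemma~\ref{lem:solutionoperator}) combined with positivity of $u'(-p)$, after which $\delta y$ is recovered from the second row.
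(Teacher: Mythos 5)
Your proposal is correct and follows the same overall strategy as the paper: prove injectivity, then upgrade to bijectivity by showing $F'(y,p)$ is a compact perturbation of an isomorphism (Fredholm of index zero). The injectivity argument is logically identical — both proofs reduce to the non-negativity of $(u'(-p)\delta p,\delta p)_{L^2}$, which comes from testing the weak form \eqref{eq:uprimedef} with $z$ and using positive semi-definiteness of $f'$; you substitute $\delta y = A\delta p$ and obtain $\|A\delta p\|_{L^2}^2 \le 0$, the paper keeps $\delta y$ and obtains $\|\delta y\|_{L^2}^2 \le 0$, which is the same number. The only real divergence is in the Fredholm decomposition: the paper splits $F'(y,p)$ as $\begin{pmatrix} A & 0 \\ 0 & -A^\ast\end{pmatrix} + \begin{pmatrix} 0 & u'(-p) \\ I & 0\end{pmatrix}$ and verifies that the second block is \emph{relatively} compact with respect to the first, whereas you absorb the $I$ into the isomorphism $M$ and show your $K = \begin{pmatrix} 0 & u'(-p) \\ 0 & 0\end{pmatrix}$ is genuinely compact, pushing the compactness onto the embedding $P\hookrightarrow H^2(\Omega)\hookrightarrow\hookrightarrow L^\infty(\Omega)$ rather than onto $C^{1,\alpha}(\Omega)\hookrightarrow\hookrightarrow Y^\ast$ as the paper does. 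Both routes are sound and of comparable length; yours has the small advantage of invoking the cleaner notion of ordinary compactness and sidestepping the relative-compactness machinery from Kato, while the paper's choice keeps the two diagonal operators decoupled so that invertibility of the "isomorphism" block is immediate from Lemma~\ref{lem:solutionoperator} without the extra triangular solve you perform.
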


\begin{proof}
	The proof consists of two parts. First we show that $F^\prime(y, p)$ is injective and second that it is a Fredholm operator of index $0$, see \cite[Chapter~IV, Section~5]{Kato1966}. These two facts imply the bijectivity of $F^\prime(y, p)$.
	For the injectivity let $(\delta y, \delta p) \in \Y\times P$ with $F^\prime(y,p)(\delta y, \delta p) = 0 \in \Y^\ast \times L^2(\Omega)$, i.e.
	\begin{align} \label{eq:proof:Fprimeinverse1}
	0 = A\delta y+ u^\prime(-p)\delta p \in \Y^\ast \qquad\text{ and }\qquad 0 = \delta y - A^\ast \delta p \in L^2(\Omega),
	\end{align}
	and therefore
	\begin{equation*}
	\Norm[L^2(\Omega)]{\delta y}^2 = (A^\ast \delta p, \delta y)_{L^2(\Omega)} = - (u^\prime(-p) \delta p, \delta p)_{L^2(\Omega)}.
	\end{equation*}
	The representation of $z:=u^\prime(-p) \delta p$ from Theorem~\ref{thm_PtoUfrechet} yields
	\begin{equation} \label{eq:proof:ellipofuprime}
	\begin{split}
	-\Norm[L^2(\Omega)]{\delta y}^2 
	& = \Biggl(\Bigl[ \gamma I + f^\prime\bigl( \nabla u(-p) \bigr) \Bigr] \nabla z, \nabla z \Biggr)_{L^2(\Omega)} 
	+ \gamma \bigl( z, z \bigr)_{L^2(\Omega)}\\
	& \geq \Bigl( f^\prime\bigl( \nabla u(-p) \bigr) \nabla z, \nabla z \Bigr)_{L^2(\Omega)}.
	\end{split}
	\end{equation}
	Since $f^{\prime}$ is positive semi-definite, we find $\norm[L^2(\Omega)]{\delta y}^2 \leq 0$. This shows $\delta y = 0$.
	By \eqref{eq:proof:Fprimeinverse1} this yields $A^\ast \delta p = 0$ in $L^2(\Omega)$, hence $\delta p = 0$, which proves the injectivity.\\
	To apply Fredholm theory we decompose $F^\prime(y,p)$ into the two operators
	\begin{equation*}
	F^\prime(y,p) = \begin{pmatrix}
	A & 0 \\ 0 & - A^\ast
	\end{pmatrix}
	+
	\begin{pmatrix}
	0 & u^\prime(-p) \\ I & 0 
	\end{pmatrix}.
	\end{equation*}
	We want to use \cite[Chapter~IV, Theorem~5.26]{Kato1966}, which states: If the first operator is a Fredholm operator of index $0$ and the second operator is compact with respect to the first operator (see \cite[Chapter~IV, Introduction to Section~3]{Kato1966}), then their sum $F^\prime(y,p)$ is also a Fredholm operator of index $0$. By the injectivity of $F^\prime(y,p)$ this implies its bijectivity.
	
	The operators $A:\Y\rightarrow \Y^\ast$ and $A^\ast: P \rightarrow L^2(\Omega)$ are invertible
	by Lemma~\ref{lem:solutionoperator}, and thus
	\begin{equation*}
	\Y\times P\rightarrow \Y^\ast \times L^2(\Omega), \qquad
	(\delta y, \delta p) \mapsto \begin{pmatrix} A & 0 \\ 0 & - A^\ast
	\end{pmatrix} \begin{pmatrix}
	\delta y\\ \delta p
	\end{pmatrix}
	\end{equation*}
	is invertible and in particular a Fredholm operator of index $0$.
	It remains to show that
	\begin{equation*}
	\Y\times P\rightarrow \Y^\ast \times L^2(\Omega), \qquad
	(\delta y, \delta p) \mapsto  \begin{pmatrix}
	0 & u^\prime(-p) \\ I & 0 
	\end{pmatrix} \begin{pmatrix}
	\delta y\\ \delta p
	\end{pmatrix}
	\end{equation*}
	is compact with respect to the first operator. 
	Thus, we have to establish that for any sequence $( (\delta y_n, \delta p_n) )_{n\in\N} \subset\Y\times P$ such that there exists a $C>0$ with 
	\begin{align} \label{eq:proof:Fprimeinverse2}
	\Bigl( \Norm[\Y]{\delta y_n} + \Norm[P]{\delta p_n} \Bigr) + \left( \Norm[\Y^\ast]{A \delta y_n} + \Norm[L^2(\Omega)]{A^\ast \delta p_n} \right) \leq C \qquad \forall n\in\N,
	\end{align}
	the sequence $\bigl( ( u^\prime(-p) \delta p_n, \delta y_n ) \bigr)_{n\in\N} \subset \Y^\ast \times L^2(\Omega)$ contains a convergent subsequence. By \eqref{eq:proof:Fprimeinverse2} we have that $( \norm[\Y]{\delta y_n} )_{n\in\N}$ is bounded. 
	The compact embedding $\Y \hookrightarrow\hookrightarrow L^2(\Omega)$ therefore implies the existence of a point $\hat y \in L^2(\Omega)$ and a subsequence, denoted in the same way, such that $\norm[L^2(\Omega)]{\delta y_n - \hat y}\to 0$.  
	We also have that $( \norm[P]{\delta p_n} )_{n\in\N}$ is bounded. In particular $\norm[L^\infty(\Omega)]{\delta p_n} \leq b^0$ for all $n\in\N$ for some $b^0>0$. By Theorem~\ref{thm_PtoUfrechet} this implies that $( u^\prime(-p) \delta p_n )_{n\in\N}$ is bounded in $C^{1,\alpha}(\Omega)$. 
	Since $C^{1,\alpha}(\Omega)\hookrightarrow\hookrightarrow \Y^\ast$, 
	the proof is complete. \qed  
\end{proof}

\begin{remark}
	Lemma~\ref{lem:Fprimeinvertible} implies that Algorithm~\ref{alg:newton} is globally well-defined.
\end{remark}	

It is well-known that the properties established in Lemma~\ref{lem:LipschitzcontOfF} and Lemma~\ref{lem:Fprimeinvertible} are sufficient for local linear/q-superlinear/q-quadratic convergence of the inexact Newton method if the residual in iteration $k$ is of appropriate order, e.g. \cite[Theorem~6.1.4]{Kelley_Itmethodseq}. Thus, we obtain the following result.

\begin{theorem}\label{thm_convinexNewtoninfdim}
	Let Assumption~\ref{ass8} hold. 
	If $(y_0,p_0)\in\Y\times P$ is sufficiently close to $(\bar y_{\param},\bar p_{\param})$, then Algorithm~\ref{alg:newton} either terminates after finitely many iterations with output $(y^\ast,p^\ast)=(\bar y_{\param},\bar p_{\param})$ or it generates a sequence $((y_k,p_k))_{k\in\N}$ that converges 
	to $(\bar y_{\param},\bar p_{\param})$. The convergence rate is
	r-linear if $\eta<1$, q-linear if $\eta$ is sufficiently small,
	q-superlinear if $\eta_k\to 0$, and of q-order $1+\omega$ if $\eta_k=O(\norm{F(y_k,p_k)}^\omega)$. Here, $\omega\in(0,1]$ is arbitrary; for $\omega=1$ this means q-quadratic convergence.
\end{theorem}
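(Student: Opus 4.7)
The plan is to reduce the statement to a direct application of the standard local convergence theorem for inexact Newton methods, e.g., \cite[Theorem~6.1.4]{Kelley_Itmethodseq}. That theorem requires three ingredients at the target root $(\bar y_\param,\bar p_\param)$: (i) $F$ is continuously \Frechet differentiable on a neighborhood of the root; (ii) $F^\prime$ is Lipschitz continuous on such a neighborhood; and (iii) $F^\prime(\bar y_\param,\bar p_\param)$ is a Banach-space isomorphism, so that by the Banach isomorphism theorem its inverse is bounded.

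First, I would observe that $Y\times P$ and $Y^\ast\times L^2(\Omega)$, equipped with the obvious product norms, are Banach spaces, so that the abstract machinery of \cite{Kelley_Itmethodseq} applies. Next, Lemma~\ref{lem:LipschitzcontOfF} furnishes both (i) and (ii): $F$ is locally Lipschitz continuously \Frechet differentiable, and in particular there exists a neighborhood $U$ of $(\bar y_\param,\bar p_\param)$ on which $F^\prime$ is Lipschitz. Lemma~\ref{lem:Fprimeinvertible} gives the bijectivity of $F^\prime$ at every point of $Y\times P$, in particular at the root, so that by the open mapping/inverse mapping theorem $F^\prime(\bar y_\param,\bar p_\param)^{-1}\in \Lin(Y^\ast\times L^2(\Omega),Y\times P)$ is bounded. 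Thus (iii) holds.

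With these three ingredients in place, the classical inexact Newton theorem applied to the residual condition $\Norm{F(y_k,p_k)+F^\prime(y_k,p_k)(\delta y_k,\delta p_k)}\leq \eta_k\Norm{F(y_k,p_k)}$ in Algorithm~\ref{alg:newton} delivers, for $(y_0,p_0)$ sufficiently close to $(\bar y_\param,\bar p_\param)$, either finite termination at the root or an infinite sequence converging to it. The quantitative rates then read off from the choice of $\eta_k$: r-linear convergence whenever $\eta<1$, q-linear if $\eta$ is small enough (depending on the operator norms of $F^\prime(\bar y_\param,\bar p_\param)^{-1}$ and of the Lipschitz constant from Lemma~\ref{lem:LipschitzcontOfF}), q-superlinear if $\eta_k\to 0$, and q-order $1+\omega$ if $\eta_k=O(\Norm{F(y_k,p_k)}^\omega)$; the case $\omega=1$ yields q-quadratic convergence.

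The only step that is not entirely routine is verifying (iii), but this has already been done in Lemma~\ref{lem:Fprimeinvertible} via a Fredholm-index argument combined with injectivity; the remaining content of the proof is essentially bookkeeping, matching the hypotheses of \cite[Theorem~6.1.4]{Kelley_Itmethodseq} with Lemmas~\ref{lem:LipschitzcontOfF} and~\ref{lem:Fprimeinvertible}. No obstacle of substance arises at this stage, since the heavy analytic lifting (Hölder regularity of $u(p)$, differentiability of $p\mapsto u(p)$, and its Lipschitz derivative) has been carried out in Section~\ref{sec_regularity}.
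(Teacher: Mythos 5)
Your proposal is correct and takes essentially the same route as the paper: the paper's argument is exactly the remark preceding the theorem, which cites \cite[Theorem~6.1.4]{Kelley_Itmethodseq} and observes that Lemma~\ref{lem:LipschitzcontOfF} and Lemma~\ref{lem:Fprimeinvertible} supply the required hypotheses. Your write-up just makes explicit the routine bookkeeping that the paper leaves implicit.
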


\begin{remark}
	Lemma~\ref{lem_Hoeldercontinuityofregmeancurvatureequation} shows that 
	convergence of $(p_k)_{k\in\N}$ (with a certain rate) implies convergence of $(u(p_k))_{k\in\N}$ in $C^{1,\alpha}(\Omega)$ (with a related rate).
\end{remark}

\section{Finite Element approximation} \label{sec:FEapproximation}

In this section we provide a discretization scheme for \eqref{eq:regProblem} 
and prove its convergence. Throughout, we work with a fixed pair $\parambr\in\Rpos^2$. 

\subsection{Discretization} \label{sec:femdiscr}

We use Finite Elements for the discretization of \eqref{eq:regProblem}. Control, state and adjoint state are discretized by piecewise linear and globally continuous elements on a triangular grid. We point out that discretizing the control by piecewise constant Finite Elements 
will not ensure convergence to the optimal control $\bar u_{\param}$, in general; cf. \cite[Section~4]{Bartels2012}.

For all $h\in(0,h_0]$ and a suitable $h_0>0$ let $\mathcal{T}_h$ denote a collection of open triangular cells $T \subset\Omega$ with $h = \max_{T\in\mathcal{T}_h} \operatorname{diam}(T)$. We write $\Omega_h := \operatorname{int}( \cup_{T\in\mathcal{T}_h} \bar T )$.
We assume that there are constants $C>0$ and $c > \frac{1}{2}$ such that
\begin{equation} \label{eq:boundarydist}
\max_{x\in\partial\Omega_h}\operatorname{dist}( x, \partial\Omega ) \leq Ch^c, \qquad |\Omega\setminus\Omega_h| \xrightarrow{h\rightarrow 0} 0, \qquad |\partial\Omega_h| \leq C
\end{equation}
for all $h\in(0,h_0]$. 
We further assume $(\mathcal{T}_h)_{h\in(0,h_0]}$ to be quasi-uniform and $\Omega_h \subset \Omega_{h^\prime}$ for $h^\prime \leq h$. The assumptions in \eqref{eq:boundarydist} are rather mild and in part implied if, for example, $\Omega$ and $(\Omega_h)_{h>0}$ are a family of uniform Lipschitz domains, cf. \cite[Sections 4.1.2 \& 4.1.3]{Hafemeyer2020}.
We utilize the function spaces
\begin{equation*}
V_h := \left\lbrace v_h\in C(\bar\Omega_h): v_h|_T \text{ is affine linear } \forall \,T\in\mathcal{T}_h \right\rbrace, \quad \VhO := V_h \cap H^1_0(\Omega_h).
\end{equation*}
Because $V_h\hookrightarrow H^1(\Omega_h)$ it follows that $\VhO$ contains precisely those functions of $V_h$ that vanish on $\partial\Omega_h$. 
We use the standard nodal basis $\varphi_1, \varphi_2, \dots, \varphi_{\dim(\Vh)}$ in $\Vh$ and assume that it is ordered in such a way that $\varphi_1, \varphi_2, \dots, \varphi_{\dim( \VhO )}$ is a basis of $\VhO$.
For every $u\in L^2(\Omega_h)$ there is a unique $y_h\in\VhO$ that satisfies
\begin{equation*}
\int_{\Omega_h} \left(\sum_{i,j=1}^N a_{ij}\partial_i y_h \partial_{j}\varphi_h\right) + c_0 y_h \varphi_h \dx 
=  \int_{\Omega_h} u \varphi_h \dx \qquad\forall\varphi_h\in\VhO
\end{equation*}
and by defining $S_h u:=y_h$ we obtain the discrete solution operator
$S_h: L^2(\Omega_h) \rightarrow \VhO$ to the PDE in \eqref{eq:ocintro}. 
The discretized version of \eqref{eq:regProblem} is given by 
\begin{equation*} \tag{\mbox{ROC$_{\gamma,\delta,h}$}} \label{eq:regProblemdisc}
\min_{u\in \Vh} \; \underbrace{\frac{1}{2} \Norm[L^2(\Omega_h)]{S_h u-y_{\Omega}}^2 + \beta \psi_{\delta,h}(u) + \frac{\gamma}{2}\Norm[H^1(\Omega_h)]{u}^2}_{=:j_\paramh(u)},
\end{equation*}
where $\psi_{\delta,h}:H^1(\Omega_h)\rightarrow\R$ is defined in the same way as $\psi_\delta$, but with $\Omega$ replaced by $\Omega_h$.
By standard arguments this problem has a unique optimal solution $\bar u_\paramh$. 
Based on $\bar u_\paramh$ we define 
$\bar y_\paramh:=S_h\bar u_\paramh$ and $\bar p_\paramh:=S_h^\ast(S_h\bar u_\paramh-{y_{\Omega}})$.
For $h\to 0$ the triple $(\bar u_\paramh,\bar y_\paramh, \bar p_\paramh)$ converges to the continuous optimal triple $(\bar u_\param,\bar y_\param,\bar p_\param)$ in an appropriate sense, as we show next.

\subsection{Convergence} \label{sec:disconv}

In this section we prove convergence of the Finite Element approximation. 
We will tacitly use that extension-by-zero yields for each $v\in\VhO \subset H^1_0(\Omega_h)$ a function in $H^1_0(\Omega)$. 
Also, we need the following density result. 
\begin{lemma} \label{lem:pseudointerpol}
	Let \eqref{eq:boundarydist} hold. 
	For each $\varphi\in C_0^\infty(\Omega)$ there exists a sequence $(\varphi_h)_{h > 0}$ with $\varphi_h\in Y_h$ for all $h$ such that $\lim_{h\to 0^+} \norm[H^1(\Omega_h)]{\varphi_h-\varphi} = 0$.
\end{lemma}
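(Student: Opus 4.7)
The plan is to define $\varphi_h$ as the standard nodal (Lagrange) interpolant of $\varphi$ on the triangulation $\mathcal{T}_h$ and then verify two things: that $\varphi_h \in \VhO$ for $h$ sufficiently small, and that $\varphi_h \to \varphi$ in $H^1(\Omega_h)$.

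First, since $\varphi \in C_0^\infty(\Omega)$, its support $K := \operatorname{supp}(\varphi)$ is a compact subset of $\Omega$ with $\rho := \operatorname{dist}(K,\partial\Omega) > 0$. Extending $\varphi$ by zero produces a function in $C_0^\infty(\R^N)$ that I continue to call $\varphi$, so that the nodal interpolant $\varphi_h := I_h \varphi \in V_h$ is well-defined on every $\Omega_h$. The first condition in \eqref{eq:boundarydist} ensures that every $x \in \partial\Omega_h$ satisfies $\operatorname{dist}(x,\partial\Omega) \leq Ch^c$, hence, by the triangle inequality, $\operatorname{dist}(x,K) \geq \rho - Ch^c$. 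For $h$ small enough that $Ch^c < \rho$, every nodal point of $\mathcal{T}_h$ lying on $\partial\Omega_h$ is outside $K$, so $\varphi$ vanishes there and so does $\varphi_h$; by the piecewise linearity of $\varphi_h$, this yields $\varphi_h|_{\partial\Omega_h} = 0$, i.e.\ $\varphi_h \in \VhO$.

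Second, for the convergence I would invoke standard Lagrange interpolation estimates on the shape-regular (in fact quasi-uniform) family $(\mathcal{T}_h)_{h>0}$: on each cell $T$ one has $\Norm[H^1(T)]{\varphi - I_h \varphi} \leq C h_T \Norm[H^2(T)]{\varphi}$, with $C$ depending only on the shape regularity. Squaring and summing over $T \in \mathcal{T}_h$ gives
\begin{equation*}
\Norm[H^1(\Omega_h)]{\varphi - \varphi_h} \leq C h \Norm[H^2(\Omega_h)]{\varphi} \leq C h \Norm[H^2(\R^N)]{\varphi} \xrightarrow{h \to 0^+} 0,
\end{equation*}
where the last step uses that $\varphi$ (extended by zero) belongs to $C_0^\infty(\R^N)$, so its $H^2$ norm on $\Omega_h$ is bounded uniformly in $h$.

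The main subtlety is the homogeneous boundary condition: since the triangulation is not required to fit $\partial\Omega$ exactly, one cannot simply appeal to $\varphi|_{\partial\Omega} = 0$ to conclude $\varphi_h|_{\partial\Omega_h} = 0$. This is resolved by the first estimate in \eqref{eq:boundarydist}, which forces $\partial\Omega_h$ into a thin tubular neighborhood of $\partial\Omega$ and therefore, for small $h$, entirely away from $K$. All remaining steps are textbook Finite Element interpolation theory and introduce no further complications.
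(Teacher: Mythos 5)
Your proof is correct and follows the same route as the paper: take the nodal interpolant, use the first condition in \eqref{eq:boundarydist} to place $\operatorname{supp}(\varphi)$ away from $\partial\Omega_h$ so that the interpolant vanishes on the boundary nodes, and then invoke the standard $H^1$ interpolation estimate on the quasi-uniform family. You spell out the boundary-node argument in more detail than the paper (which only says $\operatorname{supp}(\varphi)\subset\Omega_h$ for small $h$), but the mechanism is identical.
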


\begin{proof}
	Let $\varphi\in C_0^\infty(\Omega)$. 
	Due to \eqref{eq:boundarydist} we have $\operatorname{supp}(\varphi)\subset \Omega_h$ for all sufficiently small $h$.
	The claim then follows by choosing for $\varphi_h$ the nodal interpolant of $\varphi$ since $\lim_{h\to 0^+}\norm[H^1(\Omega_h)]{\varphi_h-\varphi}=0$ for this choice, see \cite[Theorem 1.103]{Ern2004}. 
	\qed 
\end{proof}

\begin{theorem}\label{thm_FEMconvergenceinLtwo}
	Let \eqref{eq:boundarydist} hold. 
	We have 
	\begin{equation*}
		\lim_{h\to 0^+}\Norm[L^2(\Omega)^3]{\left(\bar u_\paramh,\bar y_\paramh,\bar p_\paramh\right) - \left(\bar u_\param,\bar y_\param,\bar p_\param\right)} = 0,
	\end{equation*}
	where $\bar u_\paramh$, $\bar y_\paramh$ and $\bar p_\paramh$ are extended by zero to $\Omega$.
\end{theorem}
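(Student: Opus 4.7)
The plan is to follow the classical variational strategy: uniform boundedness, weak compactness, identification of the limit via lower semicontinuity plus a recovery sequence, and upgrade to strong convergence using the strict convexity of $j_\param$. The $H^1$ regularization term with the fixed, strictly positive parameter $\gamma$ is what makes this work.

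First I would establish uniform boundedness. Testing the optimality of $\bar u_\paramh$ against $u=0\in V_h$ gives
\begin{equation*}
\tfrac{\gamma}{2}\Norm[H^1(\Omega_h)]{\bar u_\paramh}^2 \le j_\paramh(\bar u_\paramh) \le j_\paramh(0) = \tfrac12\Norm[L^2(\Omega_h)]{y_\Omega}^2 + \beta\sqrt{\delta}\,|\Omega_h|,
\end{equation*}
so $\Norm[H^1(\Omega_h)]{\bar u_\paramh}$ is bounded uniformly in $h$. Extending by zero, the sequence is bounded in $L^2(\Omega)$ and, by restricting to any compact $K\subset\Omega$ (which lies inside $\Omega_h$ for small $h$ thanks to \eqref{eq:boundarydist}), it is bounded in $H^1(K)$. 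A standard diagonal/Rellich argument along an exhaustion of $\Omega$ then yields a subsequence and a limit $u^\ast\in H^1(\Omega)$ with $\bar u_\paramh\rightharpoonup u^\ast$ in $L^2(\Omega)$ and strongly in $L^2_{\mathrm{loc}}(\Omega)$; the fact that $|\Omega\setminus\Omega_h|\to 0$ together with the uniform $L^2$ bound promotes this to strong $L^2(\Omega)$ convergence.

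Next I would construct a recovery sequence. Since $\bar u_\param\in C^{1,\alpha}(\Omega)$ by Lemma~\ref{lem_Hoeldercontinuityofregmeancurvatureequation}, its nodal interpolant $I_h\bar u_\param\in V_h$ is well-defined and satisfies $\Norm[H^1(\Omega_h)]{I_h\bar u_\param-\bar u_\param}\to 0$ by standard FE estimates; combined with $|\Omega\setminus\Omega_h|\to 0$ this yields $\Norm[H^1(\Omega_h)]{I_h\bar u_\param}^2\to\Norm[H^1(\Omega)]{\bar u_\param}^2$. Using the $H^1$-Lipschitz continuity of $\psi_\delta$ (Lemma~\ref{lem:psidelta}) and a routine argument for $S_h$, I would obtain $j_\paramh(I_h\bar u_\param)\to j_\param(\bar u_\param)$, whence optimality gives $\limsup_h j_\paramh(\bar u_\paramh)\le j_\param(\bar u_\param)$. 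Conversely, the weak $L^2$ convergence $\bar u_\paramh\rightharpoonup u^\ast$, together with the weak $H^1$ lower semicontinuity of $u\mapsto\Norm[H^1]{u}^2$, the $L^1$-lower semicontinuity of $\psi_\delta$, and continuity of $u\mapsto\Norm[L^2(\Omega)]{Su-y_\Omega}^2$ through the $L^2$-$L^2$ continuity of $S$ and $S_h$, yields $j_\param(u^\ast)\le\liminf_h j_\paramh(\bar u_\paramh)$. Chaining the two bounds gives $j_\param(u^\ast)\le j_\param(\bar u_\param)$, so by the uniqueness in Theorem~\ref{thm:solutions} we conclude $u^\ast=\bar u_\param$, and a subsequence-subsequence argument shows that the full sequence converges.

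To upgrade to strong $L^2$ (in fact $H^1$) convergence, I would exploit that we simultaneously have $j_\paramh(\bar u_\paramh)\to j_\param(\bar u_\param)$ and weak $L^2$ convergence. The strong convexity of the $\gamma$-term combined with the already-established convergence of the tracking and $\psi_\delta$ parts forces $\Norm[H^1(\Omega_h)]{\bar u_\paramh}\to\Norm[H^1(\Omega)]{\bar u_\param}$; together with the weak convergence this yields strong $H^1$, hence strong $L^2(\Omega)$, convergence of the extended controls. Finally, for the state and adjoint I would write
\begin{equation*}
\bar y_\paramh-\bar y_\param = (S_h-S)\bar u_\param + S_h(\bar u_\paramh-\bar u_\param),
\end{equation*}
use the uniform $L^2$-stability of $S_h$ together with the strong $L^2$ convergence of the controls for the second term, and standard FE convergence $S_h u\to Su$ in $L^2$ for fixed $u$ for the first; the adjoint is handled analogously through $\bar p_\paramh = S_h^\ast(\bar y_\paramh-y_\Omega)$. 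The main obstacle is the bookkeeping between $\Omega$ and $\Omega_h$, in particular justifying strong $L^2(\Omega)$ convergence of the zero-extensions (where \eqref{eq:boundarydist} is essential) and ensuring that the FE convergence results for $S_h$ carry over uniformly on the varying domains.
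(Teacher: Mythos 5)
Your overall variational strategy — uniform $H^1(\Omega_h)$ bound by comparing against $u=0$, weak compactness, identification of the limit via a recovery sequence and lower semicontinuity, then an upgrade to strong convergence and finally state/adjoint convergence through $S_h$ — is the same approach the paper takes. However, precisely at the place you flag as ``the main obstacle'' (the bookkeeping between $\Omega$ and $\Omega_h$), your sketch leaves out the argument the paper actually needs, and two of your intermediate claims have genuine gaps.

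First, you assert that strong $L^2_{\mathrm{loc}}(\Omega)$ convergence together with the uniform $L^2(\Omega)$ bound and $|\Omega\setminus\Omega_h|\to 0$ already yields strong $L^2(\Omega)$ convergence. That is not immediate: $|\Omega\setminus\Omega_h|\to 0$ controls the region where the zero-extension kills $\bar u_\paramh$, but it does nothing against possible concentration of $\bar u_\paramh$ on the thin collar $\Omega_h\cap(\Omega\setminus K)$ near $\partial\Omega$, and a uniform $L^2$-bound alone cannot rule that out. Making this rigorous needs either higher uniform integrability (which would require uniform Sobolev constants across the family $\Omega_h$, an assumption not made in the paper) or the device the paper actually uses: for each fixed $\hat n$ the nesting $\Omega_{h_{\hat n}}\subset\Omega_{h_n}$ gives a uniform $H^1(\Omega_{h_{\hat n}})$-bound on a \emph{fixed} domain, Rellich there gives strong $L^2(\Omega_{h_{\hat n}})$ convergence, and only at the very end is $\hat n\to\infty$ taken with dominated convergence using $|\Omega\setminus\Omega_{h_{\hat n}}|\to 0$.

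Second, your lower semicontinuity step ``$j_\param(u^\ast)\le\liminf_h j_\paramh(\bar u_\paramh)$'' is not justified as stated. The terms of $j_\paramh$ are integrals over $\Omega_h$, the zero-extended $\bar u_\paramh$ is in general \emph{not} in $H^1(\Omega)$ (it jumps across $\partial\Omega_h$), so weak $H^1(\Omega)$ lower semicontinuity of $\|\cdot\|_{H^1}^2$ does not apply, and the tracking term involves $S_h$ on $\Omega_h$ rather than $S$ on $\Omega$. The paper handles this by introducing the auxiliary functional $J_{\paramnhat}$ on the fixed domain $\Omega_{h_{\hat n}}$, using $J_{\paramnhat}(\bar y_\paramn,\bar u_\paramn)\le j_\paramn(\bar u_\paramn)$ from the inclusion $\Omega_{h_{\hat n}}\subset\Omega_{h_n}$, applying lower semicontinuity on the fixed domain, and letting $\hat n\to\infty$. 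In addition, identifying the limit of the tracking term requires establishing $\hat y = S\hat u$ for the weak limits, which the paper proves via the discrete density Lemma~\ref{lem:pseudointerpol} (testing the discrete state equation against interpolants of $C_0^\infty$ functions); your sketch does not address this identification at all. These are not cosmetic issues: without them neither the identification $u^\ast=\bar u_\param$ nor the claimed convergence follows.
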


\begin{proof}
	For ease of notation we do not change indices in this proof when passing to subsequences.
	Let $(h_n)_{n\in\N}$ be a zero sequence, without loss of generality monotonically decreasing.
	From $j_\paramn(\bar u_\paramn)\leq j_\paramn(0)\leq j_\param(0)$ it follows that there is a constant $C>0$, independent of $n$, such that $\lVert \bar u_\paramn \rVert_{H^1(\Omega_{h_n})} \leq C$. This implies
	$\lVert \bar y_\paramn \rVert_{H^1(\Omega_{h_n})} \leq C$.
	Using extension by zero we find that $\lVert \bar y_\paramn \rVert_{H^1(\Omega)} \leq C$  for some $C$ that is still independent of $n$.
	From the compact embedding of $H^1_0(\Omega)$ into $L^2(\Omega)$ and the reflexivity of $H^1_0(\Omega)$ we obtain a subsequence and a $\hat y \in H^1_0(\Omega)$ such that $\bar y_\paramn \xrightarrow{n\rightarrow\infty} \hat y$ strongly in $L^2(\Omega)$ and weakly in $H_0^1(\Omega)$.
	Extending $\bar u_\paramn$ by $0$ to $\Omega$ and using the reflexivity of $L^2(\Omega)$ we obtain on a subsequence that $\bar u_\paramn 
	\xrightarrow{n\rightarrow\infty} \hat u$ weakly in $L^2(\Omega)$ for some $\hat u\in L^2(\Omega)$.
	Let $\varphi\in C_0^\infty(\Omega)$ and $(\varphi_{h_n})_{n\in\mathbb{N}}$ be defined as in Lemma~\ref{lem:pseudointerpol}. Extending $\varphi_{h_n}$ by zero we have
	\begin{equation*}
		0 = A( \bar y_\paramn) \varphi_{h_n} - (\bar u_\paramn, \varphi_{h_n} )_{L^2(\Omega_{h_n})} \xrightarrow{n\rightarrow\infty} A( \hat y ) \varphi  - (\hat u, \varphi )_{L^2(\Omega)}.
	\end{equation*}
	Thus $\hat y = S\hat u$ by the density of $C_0^\infty(\Omega)$ in $H^1_0(\Omega)$. Analogous arguments show that the adjoints $\bar p_\paramn$ converge in the same way to some $\hat p\in H_0^1(\Omega)$ with $\hat p = S^\ast(\hat y-y_\Omega)$.
	It therefore suffices to prove that $\hat u=\bar u_\param$, 
	i.e., that $\hat u$ minimizes $j_\param$, and that $\bar u_\paramn \xrightarrow{n\rightarrow\infty} \hat u$ strongly in $L^2(\Omega)$. 
	Let $u\in H^1(\Omega) \cap C^\infty(\Omega)$ and denote by $I_h u \in H^1(\Omega_h)$ the usual nodal interpolant. Then it is well-known, e.g. \cite[Theorem 1.103]{Ern2004}, that $\lVert u-I_{h_n}u \rVert_{H^1(\Omega_{h_n})} \xrightarrow{n\rightarrow\infty} 0$.
	Let $\hat n\in\N$ and $n\geq \hat n$. Using $\Omega_{h_{\hat n}} \subset \Omega_{h_n}$ and the optimality of $\bar u_\paramn$ we find 
	\begin{equation*}
		 J_{\paramnhat}(\bar y_\paramn,\bar u_\paramn)\leq j_\paramn(\bar u_\paramn) \leq j_\paramn ( I_{h_n} u ),
	\end{equation*}
	where $J_{\paramnhat}:L^2(\Omega_{h_{\hat n}})\times H^1(\Omega_{h_{\hat n}})\rightarrow\R$ is given by
	\begin{equation*}
	 	J_{\paramnhat}(v,w) := \frac{1}{2} \norm[L^2(\Omega_{h_{\hat n}})]{v-y_\Omega}^2 + \beta \psi_{\delta,h_{\hat n}}(w) + \frac{\gamma}{2}\Norm[H^1(\Omega_{h_{\hat n}})]{w}^2.
	\end{equation*}	
	By $\lVert u-I_{h_n}u \rVert_{H^1(\Omega_{h_n})} \xrightarrow{n\rightarrow\infty} 0$
	we obtain
	\begin{equation*}
		\limsup_{n\rightarrow\infty} \, J_\paramnhat(\bar y_\paramn,\bar u_\paramn) \leq j_\param(u).
	\end{equation*}
	From $\lVert \bar u_\paramn \rVert_{H^1(\Omega_{h_{\hat n}})}
	\leq \lVert \bar u_\paramn \rVert_{H^1(\Omega_{h_n})} \leq C$ we infer that there exists $\tilde u\in H^1(\Omega_{h_{\hat n}})$ such that $\bar u_\paramn \xrightarrow{n\rightarrow\infty} \tilde u$ weakly in $H^1(\Omega_{h_{\hat n}})$ and strongly in $L^2(\Omega_{h_{\hat n}})$.
	On the other hand, we have $\bar u_\paramn \xrightarrow{n\rightarrow\infty} \hat u$ weakly in $L^2(\Omega_{h_{\hat n}})$, so $\hat u\in H^1(\Omega_{h_{\hat n}})$ and the convergence $\bar u_\paramn \xrightarrow{n\rightarrow\infty} \hat u$ holds strongly in $L^2(\Omega_{h_{\hat n}})$ and weakly in 
	$H^1(\Omega_{h_{\hat n}})$. 
	The semi-continuity properties of $\psi_{\delta,h_{\hat n}}$ and $\norm[H^1(\Omega_{h_{\hat n}})]{\cdot}$ together with 
	the fact that $\bar y_\paramn \xrightarrow{n\rightarrow\infty} \hat y$ strongly in $L^2(\Omega_{h_{\hat n}})$
	imply that
	\begin{equation*}
		J_{\paramnhat}(\hat y,\hat u)
		\leq \liminf_{n\rightarrow\infty} \, J_{\paramnhat}(\bar y_\paramn,\bar u_\paramn)
		\leq j_\param(u).
	\end{equation*}
	Because of $|\Omega\setminus\Omega_{h_{\hat n}}| \xrightarrow{\hat n\rightarrow\infty} 0$ we can infer by dominated convergence for $\hat n\rightarrow\infty$ that 
	$\bar u_\paramn \xrightarrow{n\rightarrow\infty} \hat u$ strongly in $L^2(\Omega)$
	and that $j_\param(\hat u)\leq j_\param(u)$ for all $u\in H^1(\Omega) \cap C^\infty(\Omega)$. 
	By density, this implies that $\hat u$ is the minimizer of $j_\param$, thereby concluding the proof. \qed 
\end{proof}

\begin{corollary}
	Let \eqref{eq:boundarydist} hold. We have
	\begin{equation*}
		\lim_{h\to 0^+}\Norm[H^1(\Omega)^2]{\left(\bar y_\paramh,\bar p_\paramh\right) - \left(\bar y_\param,\bar p_\param\right)} = 0,
	\end{equation*}
	where $\bar y_\paramh$ and $\bar p_\paramh$ are extended by zero to $\Omega$.
\end{corollary}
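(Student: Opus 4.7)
The strategy is to combine the weak $H^1_0(\Omega)$ convergence that is implicit in the proof of Theorem~\ref{thm_FEMconvergenceinLtwo} with convergence of the energy norms of $\bar y_\paramh$ and $\bar p_\paramh$; the Radon--Riesz property in Hilbert space then upgrades this to strong $H^1_0(\Omega)$ convergence. Throughout, we exploit that $\bar y_\paramh,\bar p_\paramh\in Y_{h,0}$, so their extensions by zero belong to $H^1_0(\Omega)$, and that the bilinear form $(y,v)\mapsto \mathcal{A}(y)(v) + (c_0 y, v)_{L^2(\Omega)}$ is an inner product on $H^1_0(\Omega)$ whose induced norm is equivalent to $\Norm[H^1(\Omega)]{\cdot}$ by ellipticity of $\mathcal{A}$ and $c_0\geq 0$.

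First I would extract from the proof of Theorem~\ref{thm_FEMconvergenceinLtwo} the uniform bound $\Norm[H^1(\Omega)]{\bar y_\paramh}\leq C$ and the fact that, along every subsequence, $\bar y_\paramh\rightharpoonup\bar y_\param$ weakly in $H^1_0(\Omega)$ (the limit being identified uniquely via the test-function argument built on Lemma~\ref{lem:pseudointerpol}); a standard subsequence-of-subsequence argument then yields weak convergence of the full sequence. The analogous reasoning gives $\bar p_\paramh\rightharpoonup\bar p_\param$ weakly in $H^1_0(\Omega)$.

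Next I would derive the energy identity for $\bar y_\paramh$ by testing the discrete state equation with $\bar y_\paramh$ itself. Since $\bar y_\paramh$ vanishes outside $\Omega_h$, the resulting identity reads
\begin{equation*}
\mathcal{A}(\bar y_\paramh)(\bar y_\paramh) + \int_\Omega c_0 \bar y_\paramh^2 \dx = \int_\Omega \bar u_\paramh \bar y_\paramh \dx.
\end{equation*}
By the strong $L^2(\Omega)$ convergences $\bar u_\paramh\to\bar u_\param$ and $\bar y_\paramh\to\bar y_\param$ from Theorem~\ref{thm_FEMconvergenceinLtwo}, the right-hand side converges to $\int_\Omega \bar u_\param \bar y_\param \dx$, which by the continuous state equation equals $\mathcal{A}(\bar y_\param)(\bar y_\param) + \int_\Omega c_0 \bar y_\param^2 \dx$. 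Hence the energy norms converge; combined with weak convergence, the Radon--Riesz property in the Hilbert space $H^1_0(\Omega)$ endowed with this equivalent inner product gives strong convergence $\bar y_\paramh\to\bar y_\param$ in $H^1(\Omega)$.

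The argument for $\bar p_\paramh$ is entirely analogous: its discrete defining equation $\mathcal{A}^\ast \bar p_\paramh + c_0 \bar p_\paramh = \bar y_\paramh - y_\Omega$ on $\Omega_h$, tested with $\bar p_\paramh$, yields an energy identity whose right-hand side $\int_\Omega (\bar y_\paramh - y_\Omega)\bar p_\paramh \dx$ converges to $\int_\Omega (\bar y_\param - y_\Omega)\bar p_\param \dx$ because we have just established strong $L^2(\Omega)$ convergence of both $\bar y_\paramh$ and $\bar p_\paramh$. The only subtlety is to verify that the $\mathcal{A}$-based energy is insensitive to replacing $\Omega_h$ by $\Omega$ after extension by zero, but this is immediate since $\nabla \bar y_\paramh$ and $\nabla \bar p_\paramh$ vanish on $\Omega\setminus\Omega_h$; no further obstacle is expected.
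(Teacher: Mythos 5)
Your proof is correct, but it takes a genuinely different route from the paper's. The paper introduces the Ritz projection $R_h\bar y_\param$ and splits $\lVert\bar y_\paramh-\bar y_\param\rVert_{H^1(\Omega)}$ via the triangle inequality; the error equation $A(\bar y_\paramh-R_h\bar y_\param)(\varphi_h)=(\bar u_\paramh-\bar u_\param,\varphi_h)_{L^2(\Omega_h)}$ is tested with itself, and coercivity yields $\lVert\bar y_\paramh-R_h\bar y_\param\rVert_{H^1(\Omega)}\leq C\lVert\bar u_\paramh-\bar u_\param\rVert_{L^2(\Omega)}\to 0$, which together with $R_h\bar y_\param\to\bar y_\param$ in $\Y$ gives the claim. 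You instead argue via weak $H^1$ convergence (extracted from the proof of Theorem~\ref{thm_FEMconvergenceinLtwo}) plus convergence of the energy norms obtained from testing the discrete equations with the discrete solutions themselves, and then invoke the Radon--Riesz property. Both routes rely on the strong $L^2(\Omega)$ convergence supplied by Theorem~\ref{thm_FEMconvergenceinLtwo} and on coercivity of the bilinear form on $H^1_0(\Omega)$ (via Poincar\'e, $c_0\geq 0$ and uniform ellipticity). The paper's argument is quantitative and only uses the \emph{statement} of Theorem~\ref{thm_FEMconvergenceinLtwo}, whereas your argument is qualitative, needs to reach into the \emph{proof} of that theorem for the weak $H^1$ convergence and the subsequence-of-subsequence step, but in exchange avoids the Ritz projection and any appeal to its $H^1$ approximation property on the family $\Omega_h\subset\Omega$. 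Your handling of the adjoint is also sound: the extensions by zero have vanishing gradients on $\Omega\setminus\Omega_h$, so the energy identities are unchanged, and the term $\int_{\Omega_h}y_\Omega\bar p_\paramh\dx=\int_\Omega y_\Omega\bar p_\paramh\dx$ passes to the limit by the strong $L^2$ convergence of $\bar p_\paramh$.
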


\begin{proof}
	Let $R_h \bar y_\param \in Y_h$ denote the Ritz projection with respect to $A$. Extending $\bar y_\paramh\in Y_h$ and $R_h \bar y_\param$ by zero to $\Omega$ we clearly have
	\begin{equation*}
		\lVert \bar y_\paramh - \bar y_\param \rVert_{H^1(\Omega)} \leq \lVert \bar y_\paramh - R_h \bar y_\param \rVert_{H^1(\Omega_h)} + \lVert R_h \bar y_\param - \bar y_\param \rVert_{H^1(\Omega)}.
	\end{equation*}
	By definition, $\bar y_\paramh-R_h \bar y_\param$ satisfies 
	\begin{equation*}
		A(\bar y_\paramh-R_h \bar y_\param) (\varphi_h) = (\bar u_\paramh-\bar u_\param, \varphi_h)_{L^2(\Omega_h)} \qquad \forall\varphi_h\in \VhO.
	\end{equation*}
	Thus, choosing $\varphi_h = \bar y_\paramh-R_h \bar y_\param$ and using the ellipticity of $\calA$ and $c_0\geq 0$ in $\Omega$ together with the Poincar\'e inequality in $\Omega$ yields a constant $C>0$, independent of $h$, such that
	$\lVert \bar y_\paramh-R_h \bar y_\param\rVert_{H^1(\Omega)} \leq C \lVert \bar u_\paramh-\bar u_\param \rVert_{L^2(\Omega)} \xrightarrow{h\rightarrow 0^+} 0$, where we also used extension by zero and Theorem~\ref{thm_FEMconvergenceinLtwo}. Since $R_h \bar y_\param \xrightarrow{h\rightarrow 0^+} \bar y_\param$ in $\VO$, the $H^1(\Omega)$ convergence $\bar y_\paramh\xrightarrow{h\rightarrow 0^+}\bar y_\param$ follows. The proof for $\bar p_\paramh-\bar p_\param$ is analogous. \qed 
\end{proof}

\section{Numerical solution} \label{sec:numericalsolution}

Based on the Finite Element approximation from section~\ref{sec:FEapproximation} we now study an inexact Newton method to compute the discrete solution $(\bar y_{\paramh},\bar p_{\paramh},\bar u_{\paramh})$ and we embed it into a path-following method. 

\subsection{A preconditioned inexact Newton method for the discrete problems}

We prove local convergence of an inexact Newton method when applied to a discretized version of \eqref{eq:Fdefinition} for fixed $\parambr\in\Rpos^2$. To this end, let us introduce the discrete adjoint-to-control mapping $u_h$. 
We recall that the constant $h_0>0$ is introduced at the beginning of section~\ref{sec:femdiscr}.
The proof of the following result is similar to the continuous case in Theorems~\ref{thm_PtoUfrechet}, \ref{thm:ullfd} and \ref{thm:quasilinearls}, so we omit it.

\begin{lemma}\label{lem_Lipdiffofuh}
	Let $h\in(0,h_0]$. 
	For every $p\in L^2(\Omega_h)$ there exists a unique $u_h=u_h(p)\in\Vh$ that satisfies the following discrete version of \eqref{eq:quasilinearpde}
	\begin{equation} \label{eq:discretemeancurve}
	\begin{split}
	\Bigl( \gamma \nabla u_h + f\bigl(\nabla u_h\bigr), \nabla \varphi_h \Bigr)_{L^2(\Omega_h)} + \gamma\bigl( u_h, \varphi_h \bigr)_{L^2(\Omega_h)} = (p, \varphi_h)_{L^2(\Omega_h)}
	 \quad \forall \varphi_h\in\Vh.
	 \end{split}
	\end{equation}
	The associated solution operator $u_h: L^2(\Omega_h) \rightarrow \Vh$ 
	is Lipschitz continuously \Frechet differentiable. Its derivative $u_h^\prime(p)\in\Lin(L^2(\Omega_h),\Vh)$ at $p\in L^2(\Omega_h)$ in direction $d\in L^2(\Omega_h)$ is given by $z_h = u_h^\prime(p) d \in \Vh$, where $z_h$ is the unique solution to
	\begin{equation}\label{eq:discretemeancurveder}
	\begin{split}
	\Biggl( \Bigl[\gamma I + f^\prime\bigl(\nabla u_h(p)\bigr)\Bigr] \nabla z_h, \nabla \varphi_h \Biggr)_{L^2(\Omega_h)} + \gamma \bigl(z_h, \varphi_h\bigr)_{L^2(\Omega_h)} & = (d, \varphi_h)_{L^2(\Omega_h)} \\ &\qquad \forall \varphi_h\in\Vh.
	 \end{split}
	\end{equation}
\end{lemma}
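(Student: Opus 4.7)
The plan is to mirror the strategy of Theorems~\ref{thm_PtoUfrechet} and \ref{thm:ullfd}, with the simplification that the finite-dimensional setting is considerably more benign: all norms on $\Vh$ are equivalent, the linearized operator has a uniformly elliptic principal part with constant $\gamma$, and no H\"older regularity is required at any stage.

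First I would establish existence, uniqueness and a Lipschitz estimate for $u_h$. Equation \eqref{eq:discretemeancurve} is the first-order optimality condition for
\begin{equation*}
\min_{v\in\Vh} \; \frac{\gamma}{2}\lVert v\rVert_{H^1(\Omega_h)}^2 + \beta\psi_{\delta,h}(v) - (p, v)_{L^2(\Omega_h)},
\end{equation*}
and this functional is continuous, coercive and, since $\gamma>0$, strongly convex on the finite-dimensional space $\Vh$, hence admits a unique minimizer $u_h(p)$. Testing the difference of \eqref{eq:discretemeancurve} at $p_1, p_2$ with $\varphi_h = u_h(p_1) - u_h(p_2)$ and exploiting monotonicity of $v\mapsto \gamma v + f(v)$, which follows from convexity of $v\mapsto\sqrt{\delta+|v|^2}$, yields $\gamma\lVert u_h(p_1)-u_h(p_2)\rVert_{H^1(\Omega_h)}\leq \lVert p_1-p_2\rVert_{L^2(\Omega_h)}$.

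Next I would treat Fr\'echet differentiability. The linearized problem \eqref{eq:discretemeancurveder} is uniquely solvable by Lax--Milgram, since its bilinear form is bounded (because $f^\prime$ is bounded) and coercive with constant $\gamma$ (because $f^\prime$ is positive semi-definite and the zero-order term contributes $\gamma$). Setting $r_h := u_h(p+d) - u_h(p) - u_h^\prime(p) d \in\Vh$ and subtracting the equations for $u_h(p+d)$, $u_h(p)$ and $u_h^\prime(p) d$, together with the same integral representation of the Taylor remainder of $f$ as in the proof of Theorem~\ref{thm_PtoUfrechet}, produces a linear elliptic equation for $r_h$ whose right-hand side is quadratic in $\nabla u_h(p+d)-\nabla u_h(p)$ and involves only the bounded quantity $f^{\prime\prime}$. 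Testing with $\varphi_h = r_h$, invoking coercivity of the linearized operator, and using the Lipschitz bound for $u_h$ from the previous step gives $\lVert r_h\rVert_{H^1(\Omega_h)} = O(\lVert d\rVert_{L^2(\Omega_h)}^2)$, which confirms differentiability with the claimed derivative.

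Finally, Lipschitz continuity of $u_h^\prime$ would follow along the lines of Theorem~\ref{thm:ullfd}: for $p, q \in L^2(\Omega_h)$ and $d\in L^2(\Omega_h)$, the difference $u_h^\prime(p) d - u_h^\prime(q) d\in\Vh$ satisfies an elliptic equation whose right-hand side is controlled by $\lVert f^\prime(\nabla u_h(p)) - f^\prime(\nabla u_h(q))\rVert_{L^\infty(\Omega_h)}\, \lVert u_h^\prime(q) d\rVert_{H^1(\Omega_h)}$, and the mean-value theorem combined with global boundedness of $f^{\prime\prime}$ and the Lipschitz estimate already obtained for $u_h$ provides the required bound in operator norm. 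The main obstacle in the continuous counterpart --- establishing uniform H\"older regularity of $\nabla u(p)$ via Theorem~\ref{thm:quasilinearls} --- does not arise here, because $\Vh$ is finite-dimensional and $f$ together with its derivatives is globally bounded on $\R^N$; this is precisely what makes the discrete result an essentially routine adaptation.
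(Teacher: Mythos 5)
Your proposal is correct, and it implements the structural outline the paper gestures at (the paper omits the proof, remarking only that it is ``similar to'' Theorems~\ref{thm_PtoUfrechet}, \ref{thm:ullfd} and \ref{thm:quasilinearls}). The genuinely non-automatic part of the adaptation is exactly where you put your finger: the continuous proof cannot be carried over verbatim, since $\Omega_h$ is polygonal and the uniform $C^{1,\alpha}$ bounds from Theorem~\ref{thm:quasilinearls} and Theorem~\ref{thm:linearregularity} rely on $C^{1,\alpha'}$ boundary regularity; you correctly substitute finite-dimensional norm equivalence on $\Vh$ (equivalently, inverse estimates for piecewise-constant gradients) for the Hölder machinery, which is what makes the $L^\infty$ control of $\nabla u_h$ available for free and renders the rest of the argument elementary. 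One small place to make this explicit rather than implicit: in the differentiability step, the coercivity estimate gives $\gamma\lVert r_h\rVert_{H^1(\Omega_h)}\leq C\lVert\,|w|^2\rVert_{L^2(\Omega_h)}$ with $w=\nabla u_h(p+d)-\nabla u_h(p)$ and $C$ controlled by $\sup|f''|$, and the passage from the Lipschitz bound $\lVert w\rVert_{L^2(\Omega_h)}=O(\lVert d\rVert_{L^2(\Omega_h)})$ to $\lVert\,|w|^2\rVert_{L^2(\Omega_h)}=O(\lVert d\rVert_{L^2(\Omega_h)}^2)$ needs the $h$-dependent inverse inequality $\lVert\nabla v_h\rVert_{L^\infty(\Omega_h)}\leq C(h)\lVert\nabla v_h\rVert_{L^2(\Omega_h)}$ for $v_h\in\Vh$; the same remark applies to bounding $\lVert f'(\nabla u_h(p))-f'(\nabla u_h(q))\rVert_{L^\infty(\Omega_h)}$ in the final step. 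Since the lemma is stated for a fixed $h$, the $h$-dependence of these constants is harmless.
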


With $u_h$ at hand we can discretize \eqref{eq:Fdefinition} by 
\begin{equation*}
F_h : \VhO\times \VhO \rightarrow \VhOast \times \VhOast, \qquad F_h(y, p) := \begin{pmatrix}
A y - u_h(-p) \\
y-y_\Omega - A^\ast p
\end{pmatrix}.
\end{equation*}
The same $F_h$ is obtained if we consider the optimality conditions of \eqref{eq:regProblemdisc} and express them in terms of $(y,p)$. Moreover, $(\bar y_\paramh,\bar p_\paramh)$ is the unique root of $F_h$ and 
the properties of $F$ from Lemma~\ref{lem:LipschitzcontOfF} and Lemma~\ref{lem:Fprimeinvertible} carry over to $F_h$.
\begin{lemma} \label{lem:discpropofFprime}
	Let $h\in(0,h_0]$. 
	The map $F_h: \VhO \times \VhO \rightarrow \VhOast \times \VhOast$ is Lipschitz continuously \Frechet differentiable. Its derivative at $(y,p) \in \VhO \times \VhO$ is given by
	\begin{equation*}
	F_h^\prime(y,p) \colon \VhO \times \VhO \rightarrow \VhOast \times \VhOast, \qquad
	(\delta y,\delta p) \mapsto \begin{pmatrix}
	A & u_h^\prime(-p) \\
	I & -A^\ast 
	\end{pmatrix} \begin{pmatrix}
	\delta y\\ \delta p
	\end{pmatrix}.
	\end{equation*}
	Moreover, $F_h^\prime(y, p)$ is invertible for every $(y,p)\in\VhO\times\VhO$.
\end{lemma}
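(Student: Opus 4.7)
The plan is to mirror the proofs of Lemma~\ref{lem:LipschitzcontOfF} and Lemma~\ref{lem:Fprimeinvertible} in the discrete setting. The nonlinearity enters $F_h$ only through the map $p\mapsto u_h(-p)$; the remaining terms $Ay$, $y-y_\Omega$ and $-A^*p$ are affine in $(y,p)$. By Lemma~\ref{lem_Lipdiffofuh} the composition $p\mapsto u_h(-p)$ is Lipschitz continuously \Frechet differentiable with derivative $d\mapsto -u_h'(-p)d$, so the chain rule immediately yields Lipschitz continuous \Frechet differentiability of $F_h$ together with the claimed matrix representation of $F_h'(y,p)$.

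For invertibility, I would exploit that $\VhO\times\VhO$ is finite-dimensional and therefore it suffices to establish injectivity. Suppose $(\delta y,\delta p)\in \VhO\times\VhO$ satisfies $F_h'(y,p)(\delta y,\delta p)=0$, i.e.\
\begin{equation*}
A\delta y + u_h'(-p)\delta p = 0 \text{ in } \VhOast, \qquad \delta y - A^*\delta p = 0 \text{ in } \VhOast,
\end{equation*}
where $\delta y\in\VhO$ and $u_h'(-p)\delta p\in\Vh$ are identified with elements of $\VhOast$ via the $L^2(\Omega_h)$ inner product. Testing the second equation with $\delta y$ and then using the first equation gives
\begin{equation*}
\Norm[L^2(\Omega_h)]{\delta y}^2 = \langle A^*\delta p,\delta y\rangle = \langle A\delta y,\delta p\rangle = -\langle u_h'(-p)\delta p,\delta p\rangle.
\end{equation*}
Setting $z_h := u_h'(-p)\delta p$ and testing \eqref{eq:discretemeancurveder} with $\varphi_h = z_h$ yields, as in \eqref{eq:proof:ellipofuprime}, the bound $\langle u_h'(-p)\delta p,\delta p\rangle\geq 0$ thanks to the positive semi-definiteness of $f'$ (it is the Hessian of the convex map $v\mapsto\sqrt{\delta+|v|^2}$) together with $\gamma>0$. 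Hence $\Norm[L^2(\Omega_h)]{\delta y}^2\leq 0$, forcing $\delta y=0$. The second equation then reads $A^*\delta p = 0$ in $\VhOast$; since the discrete operator $A^*\colon\VhO\to\VhOast$ inherits invertibility on $\VhO$ from the coercivity/boundedness of the bilinear form underlying $A$ (cf.\ Lemma~\ref{lem:solutionoperator}), we conclude $\delta p=0$.

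The only real subtlety, not a genuine obstacle, is the consistent handling of duality pairings and the identification of $\VhO$ with a subspace of $\VhOast$ via the $L^2(\Omega_h)$ inner product. Once this is in place, the argument is a finite-dimensional shadow of the proof of Lemma~\ref{lem:Fprimeinvertible} and in particular circumvents the Fredholm-theoretic step that was needed there, since injectivity and bijectivity coincide on finite-dimensional spaces.
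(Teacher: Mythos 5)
Your proposal is correct and takes essentially the same approach as the paper: differentiability is deduced from Lemma~\ref{lem_Lipdiffofuh} via the chain rule, invertibility is reduced to injectivity by finite-dimensionality, and injectivity is established by the same testing argument as in Lemma~\ref{lem:Fprimeinvertible}, using the positive semi-definiteness of $f'$ to force $\delta y=0$ and then the injectivity of the discrete $A^\ast$ on $\VhO$ to conclude $\delta p=0$. The paper's proof is terse (it simply invokes Lemma~\ref{lem:Fprimeinvertible}), so you have merely spelled out the same argument, correctly noting that the Fredholm step is superfluous in the finite-dimensional setting.
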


\begin{proof}
	The regularity follows from Lemma~\ref{lem_Lipdiffofuh}.
	Since $\dim( \VhO \times \VhO ) = \dim( \VhOast \times \VhOast )$, it is sufficient to show that $F_h^\prime(y,p)$ is injective. This can be done exactly as in Lemma~\ref{lem:Fprimeinvertible}. \qed
\end{proof}

Similar to Theorem~\ref{thm_convinexNewtoninfdim} we have the following result. 

\begin{theorem} \label{thm:discnewton}
	Let $h\in(0,h_0]$ and $\eta\in[0,\infty)$. 
	Then there is a neighborhood $N\subset \VhO\times\VhO$ of $(\bar y_{\paramh},\bar p_{\paramh})$ 
	such that for any $(y_0,p_0)\in N$ any sequence $((y_k,p_k))_{k\in\N}$ that is generated according to
	$(y_{k+1},p_{k+1})=(y_k,p_k) + (\delta y_k,\delta p_k)$, where
	$(\delta y_k,\delta p_k)\in\VhO\times \VhO$ satisfies for all $k\geq 0$
	\begin{equation*}
		\Norm{F_h (y_k,p_k) + F_h^\prime(y_k,p_k)(\delta y_k,\delta p_k) }\leq \eta_k \Norm{F_h (y_k,p_k)} 
	\end{equation*}
	with $(\eta_k)\subset[0,\eta]$, converges to $(\bar y_{\paramh},\bar p_{\paramh})$. The convergence is 
	r-linear if $\eta<1$, q-linear if $\eta$ is sufficiently small, 
	q-superlinear if $\eta_k\to 0$, and of q-order $1+\omega$ if 
	$\eta_k=O(\norm{F_h(y_k,p_k)}^\omega)$.
	Here, $\omega\in(0,1]$ is arbitrary.
\end{theorem}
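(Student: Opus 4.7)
The plan is to derive the result from the standard local convergence theory of inexact Newton methods (e.g.\ \cite[Theorem~6.1.4]{Kelley_Itmethodseq}), mirroring the argument used in the infinite-dimensional setting of Theorem~\ref{thm_convinexNewtoninfdim}. The two ingredients required for that framework to apply are: (i) $F_h$ is Lipschitz continuously \Frechet differentiable in a neighborhood of its root, and (ii) $F_h'$ is invertible at the root, with the inverse locally bounded.

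Both ingredients are already furnished by Lemma~\ref{lem:discpropofFprime}: $F_h$ is Lipschitz continuously \Frechet differentiable on all of $\VhO\times\VhO$, and $F_h^\prime(y,p)$ is invertible at every point. In particular $F_h^\prime(\bar y_{\paramh},\bar p_{\paramh})$ is invertible. Since $\VhO\times\VhO$ is finite-dimensional, the map $(y,p)\mapsto F_h^\prime(y,p)^{-1}$ is continuous, so $\Norm{F_h^\prime(y,p)^{-1}}$ is uniformly bounded on a sufficiently small closed ball around $(\bar y_{\paramh},\bar p_{\paramh})$. Furthermore, as noted prior to Lemma~\ref{lem:discpropofFprime}, $(\bar y_{\paramh},\bar p_{\paramh})$ is the unique root of $F_h$, so there are no ambiguities in identifying the limit.

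With these ingredients the well-known contraction estimate for inexact Newton steps, applied to the residual inequality
\[
	\Norm{F_h(y_k,p_k)+F_h^\prime(y_k,p_k)(\delta y_k,\delta p_k)}\leq \eta_k \Norm{F_h(y_k,p_k)},
\]
yields in a sufficiently small neighborhood $N$ of $(\bar y_{\paramh},\bar p_{\paramh})$ a bound of the form
$\Norm{(y_{k+1},p_{k+1})-(\bar y_{\paramh},\bar p_{\paramh})} \leq (C\eta_k+\rho_k) \Norm{(y_k,p_k)-(\bar y_{\paramh},\bar p_{\paramh})}$,
where $\rho_k\to 0$ as the iterates approach the root. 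Shrinking $N$ if necessary, this gives r-linear convergence when $\eta<1$, q-linear convergence when $\eta$ is sufficiently small, and q-superlinear convergence when $\eta_k\to 0$. For the q-order $1+\omega$ claim one additionally uses the Lipschitz continuity of $F_h^\prime$ together with the two-sided estimate $\Norm{F_h(y_k,p_k)}\asymp \Norm{(y_k,p_k)-(\bar y_{\paramh},\bar p_{\paramh})}$ (valid near the root via the invertibility of $F_h^\prime$ at that point) to bound the quadratic remainder term against $\Norm{(y_k,p_k)-(\bar y_{\paramh},\bar p_{\paramh})}^2$.

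The argument is essentially routine; the finite-dimensional setting of $\VhO\times\VhO$ removes the compactness and Fredholm-type subtleties that appeared in Lemma~\ref{lem:Fprimeinvertible} for the continuous problem. The only point needing mild care is verifying that the hypotheses of the invoked inexact Newton theorem match exactly what Lemma~\ref{lem:discpropofFprime} provides, after which all four convergence rates follow in a single sweep.
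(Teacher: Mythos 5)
Your proposal is correct and takes essentially the same approach as the paper: the paper also derives Theorem~\ref{thm:discnewton} by invoking the standard local convergence theory of inexact Newton methods (citing \cite[Theorem~6.1.4]{Kelley_Itmethodseq}), with the hypotheses supplied by Lemma~\ref{lem:discpropofFprime}, exactly as you do. Your added remarks on finite-dimensionality simplifying the invertibility argument and on the two-sided equivalence $\Norm{F_h(y_k,p_k)}\asymp \Norm{(y_k,p_k)-(\bar y_{\paramh},\bar p_{\paramh})}$ near the root are accurate but not strictly needed beyond what the cited theorem already gives.
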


As a preconditioner for the fully discrete Newton system based on
\begin{equation}\label{eq_defPrecond}
	F_h^\prime(y,p)
	= \begin{pmatrix}
		\mathbf{A} & \mathbf{u_h^\prime(-p)} \\ \mathbf{M} & -\mathbf{A}^T
	\end{pmatrix} \quad\text{ we use }\quad
	\calP^{-1} :=
	\begin{pmatrix}
		\mathbf{B}&\mathbf{0}\\ \mathbf{B}^T \mathbf{M} \mathbf{B} & -\mathbf{B}^T
	\end{pmatrix}, 
\end{equation}
where $\mathbf{B}=\mathbf{A}^{-1}$ is the inverse of the stiffness matrix $\mathbf{A}$, and $\mathbf{M}$ is the mass matrix.
%As the Newton system is solved iteratively, this preconditioner does not require to invert $\mathbf{A}$. 
The preconditioner would agree with $F_h^\prime(y,p)^{-1}$ if $\mathbf{u_h^\prime(-p)}$ were zero.
%In the numerical experiments we use $\mathbf{B}=\mathbf{A}^{-1}$. 

\subsection{A practical path-following method} \label{sec:fullydiscalg}

The following Algorithm~\ref{alg:pfdisc} is a practical path-following inexact Newton method to solve \eqref{eq:regProblemdisc}. 
We expect that its global convergence can be shown if $\rho(\gamma_i,\delta_i)$ and $\eta_k$ are chosen sufficiently small, 
but this topic is left for future research. For the choices detailed below, global convergence holds in practice.

\begin{algorithm2e} 
	\DontPrintSemicolon
	\caption{Inexact path-following inexact Newton method \label{alg:pfdisc}} 
	\KwIn{ %mesh $\mathcal{T}_h$, 
		$(\hat y_0, \hat p_0) \in \VhO\times\VhO$, $(\gamma_0,\delta_0) \in \Rpos^2$, $\kappa>0$ }
	\For(){$i=0,1,2,\ldots$}
	{	
		set $(y_0, p_0) := (\hat y_i, \hat p_i)$ \\
		\For(\label{line_forloopNewton}){$k=0,1,2,\ldots$}
		{	
			\If(\label{line_Newtontermination}){$\norm{F_h(y_k,p_k)}\leq \rho(\gamma_i,\delta_i)$}
			{	
				set $(\hat y_{i+1}, \hat p_{i+1}):=(y_k,p_k)$\\
				\textbf{go to }line~\ref{line_endofl} }\label{line_endoft} 
			choose $\eta_k>0$ and use preconditioned \textsc{gmres} to determine $(\delta y_k, \delta p_k)$ such that 
			$\norm{F_h(y_k, p_k) + F_h^\prime( y_k, p_k ) (\delta y_k, \delta p_k)}\leq \eta_k\norm{F_h(y_k,p_k)}$
			\label{line_inexacttermination} 
			\\
			\textbf{call}\, Algorithm~\ref{alg:ls}, input $w_k:=(y_k, p_k)$, $\delta w_k:=(\delta y_k,\delta p_k)$; 
			output: $\lambda_k$\\				
			set $(y_{k+1}, p_{k+1}) := (y_k,p_k) + \lambda_k(\delta y_k, \delta p_k)$	
		}\label{line_endofk}
		select $\sigma_i\in(0,1)$\label{line_endofl}\\
		\lIf{$\norm[H^1]{(\hat y_{j+1},\beta^{-1}\hat p_{j+1})-(\hat y_j,\beta^{-1}\hat p_j)}\leq (1-\sigma_j)\kappa\norm[H^1]{(\hat y_{i+1},\beta^{-1}\hat p_{i+1})}$ \rm{for} $j=i$ and $j=i-1$}{terminate with output $(y^\ast,p^\ast):=(\hat y_{i+1},\hat p_{i+1})$ \label{line_pftermination}}
		\lElse{set $(\gamma_{i+1},\delta_{i+1}):= (\sigma_i\gamma_i,\sigma_i \delta_i)$\label{line_paramupdate}}
	}
	\KwOut{$(y^\ast,p^\ast)$}
\end{algorithm2e}

The inner loop in lines~\ref{line_forloopNewton}--\ref{line_endofk} of Algorithm~\ref{alg:pfdisc}
uses an inexact Newton method to compute an approximation $(\hat y_{i+1}, \hat p_{i+1})$ of the root of $F_h$ for fixed $(\gamma_i,\delta_i)$
satisfying $\norm{F_h(\hat y_{i+1},\hat p_{i+1})}\leq \rho(\gamma_i,\delta_i)$, where $\rho:\Rpos^2\rightarrow\Rpos$. 
In the implementation we use $\rho(\gamma,\delta) = \max\{10^{-6},\gamma\}$, which may be viewed as inexact path-following. For the forcing term $\eta_k$ we use the two choices $\eta_k=\bar\eta_k:=10^{-6}$ and $\eta_k=\hat\eta_k:=\max\{10^{-6},\min\{10^{-k-1},\sqrt{\delta_i}\}\}$, where $k=k(i)$. For $\bar\eta_k$ we have $\bar\eta_k\leq \norm{F_h(y_k,p_k)}$ since we terminate the inner loop if $\norm{F_h(y_k,p_k)} < 10^{-6}$.
The choice $\eta_k=\bar\eta_k$ is related to quadratic convergence, while $\eta_k=\hat \eta_k$ corresponds to superlinear convergence, cf. Theorem~\ref{thm:discnewton}.
We also terminate \gmres if the Euclidean norm of $F_h(y_k, p_k) + F_h^\prime( y_k, p_k ) (\delta y_k, \delta p_k)$ drops below $\eta_k$ 
since this seemed beneficial in the numerical experiments. 

To compute the control $u_h(-p_k)$ that satisfies \eqref{eq:discretemeancurve} we use a globalized Newton method
that can be shown to converge q-quadratically for arbitrary starting points $u^0\in\Vh$. 
In fact, since \eqref{eq:discretemeancurve} is equivalent to $u_h(p)$ being the minimizer of the smooth and strongly convex problem
\begin{equation*}
	\min_{v_h\in V_h} \, \beta\psi_{\delta,h}(v_h)+\frac{\gamma}{2}\lVert v_h\rVert_{H^1(\Omega_h)}^2 - \left(p,v_h\right)_{L^2(\Omega_h)},
\end{equation*}
standard globalization techniques for Newton's method will ensure these convergence properties (e.g., Newton's method combined with an Armijo line search \cite[Section~9.5.3]{BoydVandenberghe}).
The method terminates when the Newton residual falls below a threshold that decreases with $(\gamma_i,\delta_i)$. The linear systems are solved using SciPy's sparse direct solver \texttt{spsolve}. As an alternative we tested a preconditioned conjugate gradients method (PCG). The results were mixed: The use of PCG diminished the total runtime of Algorithm~\ref{alg:pfdisc} if all went well, but broke down on several occasions for smaller values of $(\gamma_i,\delta_i)$.

In lines~\ref{line_endofl}--\ref{line_paramupdate} it is decided whether to accept $(\hat y_{i+1},\hat p_{i+1})$ as a solution and terminate the algorithm; if not, then we continue the path-following by updating $(\gamma_i,\delta_i)$ with the factor $\sigma_i$. 
We select $\sigma_i$ based on the number of Newton steps that are needed to compute the implicit controls $\{u_h(-p_k)\}_k$ in outer iteration $i$. If this number surpasses a predefined $m\in\N$, then we choose $\sigma_i>\sigma_{i-1}$. If it belongs to $[0,0.75 m]$, then we choose $\sigma_i<\sigma_{i-1}$. Otherwise, we let $\sigma_i=\sigma_{i-1}$. In addition, we enforce $\sigma_i\geq 0.25$ for all $i$ since we found in the numerical experiments
that choosing $\sigma_i$ too small can slow down or prevent convergence in some cases once $(\gamma_i,\delta_i)$ is very small, cf. Table~\ref{tab_ex1_sigma} below.
The weighing $1/\beta$ in the termination criterion is made since the amplitude of the adjoint state is roughly of order $\beta$ in comparison to the state.
In all experiments we use $\kappa=10^{-3}$.

Algorithm~\ref{alg:ls} augments the inexact Newton method in lines~\ref{line_forloopNewton}--\ref{line_endofk} of Algorithm~\ref{alg:pfdisc} by a non-monotone line search globalization introduced in \cite{LiFukushima2000} for Broyden's method. The non-monotonicity allows to always accept the inexact Newton step and yields potentially larger step sizes than descent-based strategies. The intention is to keep the number of trial step sizes low since every trial step size requires the evaluation of $F_h$ and hence a recomputation of $u_h(-p_k)$. In the numerical experiments we use $\tau=10^{-4}$ and we observe that in the vast majority of iterations full steps are taken, i.e., $\lambda_k=1$. 
To briefly discuss convergence properties of the globalized inexact Newton method, let us assume for simplicity that $u_h(-p_k)$ is determined exactly for each $k$. 
By following the arguments of \cite{LiFukushima2000} we can show that for sufficiently small $\eta_k$ the sequence $((y_k,p_k))_{k\in\N}$ obtained by ignoring lines~\ref{line_Newtontermination}--\ref{line_endoft} must either be unbounded or converge to the unique root of $F_h$; a key ingredient in the corresponding proof is that $F_h'(y,p)$ is invertible for all $(y,p)$, which we have demonstrated in Lemma~\ref{lem:discpropofFprime}. In particular, if $((y_k,p_k))_{k\in\N}$ is bounded, then the globalized inexact Newton method in lines~\ref{line_forloopNewton}--\ref{line_endofk} terminates finitely. 
While we always observed this termination in practice, the question whether $((y_k,p_k))_{k\in\N}$ can be unbounded remains open. 
Furthermore, as in \cite{LiFukushima2000} it follows that if $((y_k,p_k))_{k\in\N}$ converges, then eventually step size 1 is always accepted, in turn ensuring that the convergence rates of Theorem~\ref{thm:discnewton} apply. 

All norms without index in Algorithm~\ref{alg:pfdisc} and \ref{alg:ls} are $L^2(\Omega_h)$ norms.

\begin{algorithm2e}
	\DontPrintSemicolon
	\caption{Computation of step size}\label{alg:ls}
	\KwInofupalg{ $(w_k,\delta w_k)$, 
		\, $\tau>0$ }\\
	\For{ $l=0,1,2,\ldots$ }
	{   
		\lIf{$\norm{F_h(w_k+2^{-l} \delta w_k)}\leq \Bigl(1+\frac{1}{(l+1)^2}\Bigr)\norm{F_h(w_k)} - \tau\norm{2^{-l} \delta w_k}^2$}{set $\lambda_k:=2^{-l}$; \textbf{stop}}\label{alg2_line_residualcheck2}
	}
	\KwOut{ $\lambda_k$ }
\end{algorithm2e}

\section{Numerical results} \label{sec:numericmainchapter}

We provide numerical results for two examples. Our main goal is to illustrate that Algorithm~\ref{alg:pfdisc}
can robustly compute accurate solutions of \eqref{eq:ocintro}. 
The results are obtained from a Python implementation of Algorithm~\ref{alg:pfdisc} using DOLFIN \cite{LoggWells2010a,LoggWellsEtAl2012a}, 
which is part of FEniCS \cite{AlnaesBlechta2015a,LoggMardalEtAl2012a}. 
The code for the second example is available at \url{https://arxiv.org/abs/2010.11628}.

\subsection{Example~1: An example with explicit solution}

The first example has an explicit solution and satisfies the assumptions used in this work.
We consider \eqref{eq:ocintro} for an arbitrary $\beta>0$ with non-convex $C^\infty$ domain $\Omega = B_{4\pi}(0) \setminus \overline{B_{2\pi}(0)}$ in $\R^2$,
$\calA=-\Delta$ and $c_0\equiv 0$. 
The desired state is
\begin{equation*}
	y_\Omega(r) = \frac{\beta}{2r^3}\Bigl((1+r)\sin(r) - 1 - (2r^2-1)\cos(r)\Bigr) + \bar y
\end{equation*}
where $r(x,y)=\sqrt{x^2+y^2}$, and the optimal state $\bar y$ is
\begin{equation*}
\bar y(r) = \begin{cases}
- \frac{r^2}{4} + A\ln(r/(4\pi)) + B & \text{ if } r\in (2\pi,3\pi),\\
C \ln (r/(4\pi)) & \text{ if } r\in (3\pi,4\pi)
\end{cases}
\end{equation*}
with constants $A,B,C$ whose values are contained in appendix~\ref{sec_examplewithexplicitsolution}.
The optimal control is 
\begin{equation*}
\bar u(r)= 1_{(2\pi,3\pi)}(r),
\end{equation*} 
i.e., $\bar u$ has value $1$ on the disc $B_{3\pi}(0) \setminus \overline{B_{2\pi}(0)}$ and value $0$ on the disc $B_{4\pi}(0) \setminus {B_{3\pi}(0)}$.
The optimal value is $j(\bar u)\approx 24.85 \beta^2 + 59.22 \beta$.
In appendix~\ref{sec_examplewithexplicitsolution} we provide details on the construction of this example and verify that 
$(\bar y,\bar u)$ is indeed the optimal solution of \eqref{eq:ocintro}. 
If not stated otherwise, we use $\beta=10^{-3}$.

We use unstructured triangulations that approximate $\partial\Omega$ increasingly better as the meshes become finer, cf. \eqref{eq:boundarydist}. 
Figure~\ref{fig_ex1} depicts the optimal control $\bar u_h$, optimal state $\bar y_h$ and negative optimal adjoint state $-\bar p_h$, which were computed by Algorithm~\ref{alg:pfdisc} on a grid with 1553207 degrees of freedom (DOF).

\begin{figure}
	\centering
	\begin{subfigure}[c]{.495\textwidth}
		\centering
		\includegraphics[width=\linewidth]{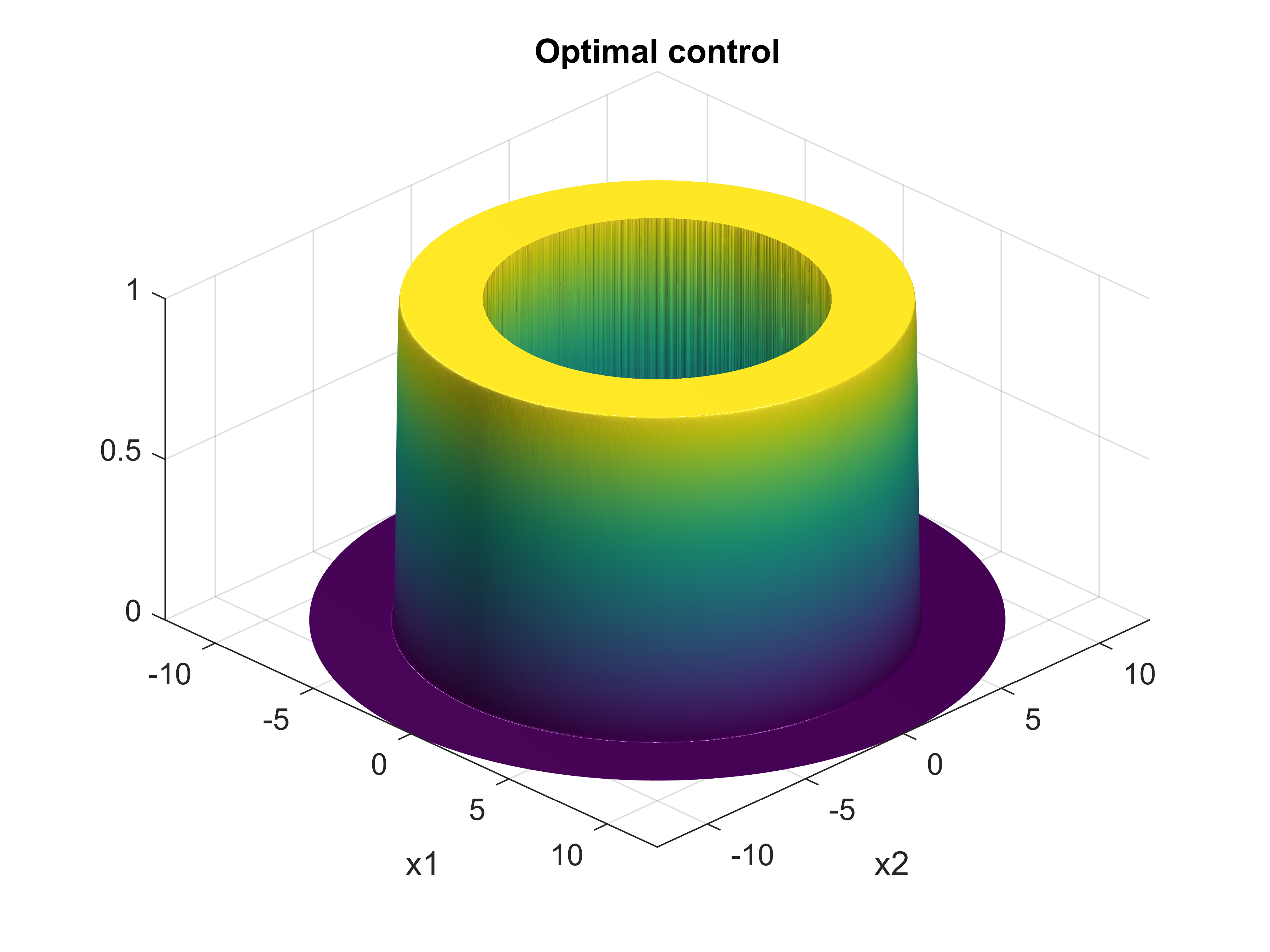}
		\caption{$\bar u_h$}
	\end{subfigure}
	\hfill
	\begin{subfigure}[c]{.495\textwidth}
	\centering
	\includegraphics[width=\linewidth]{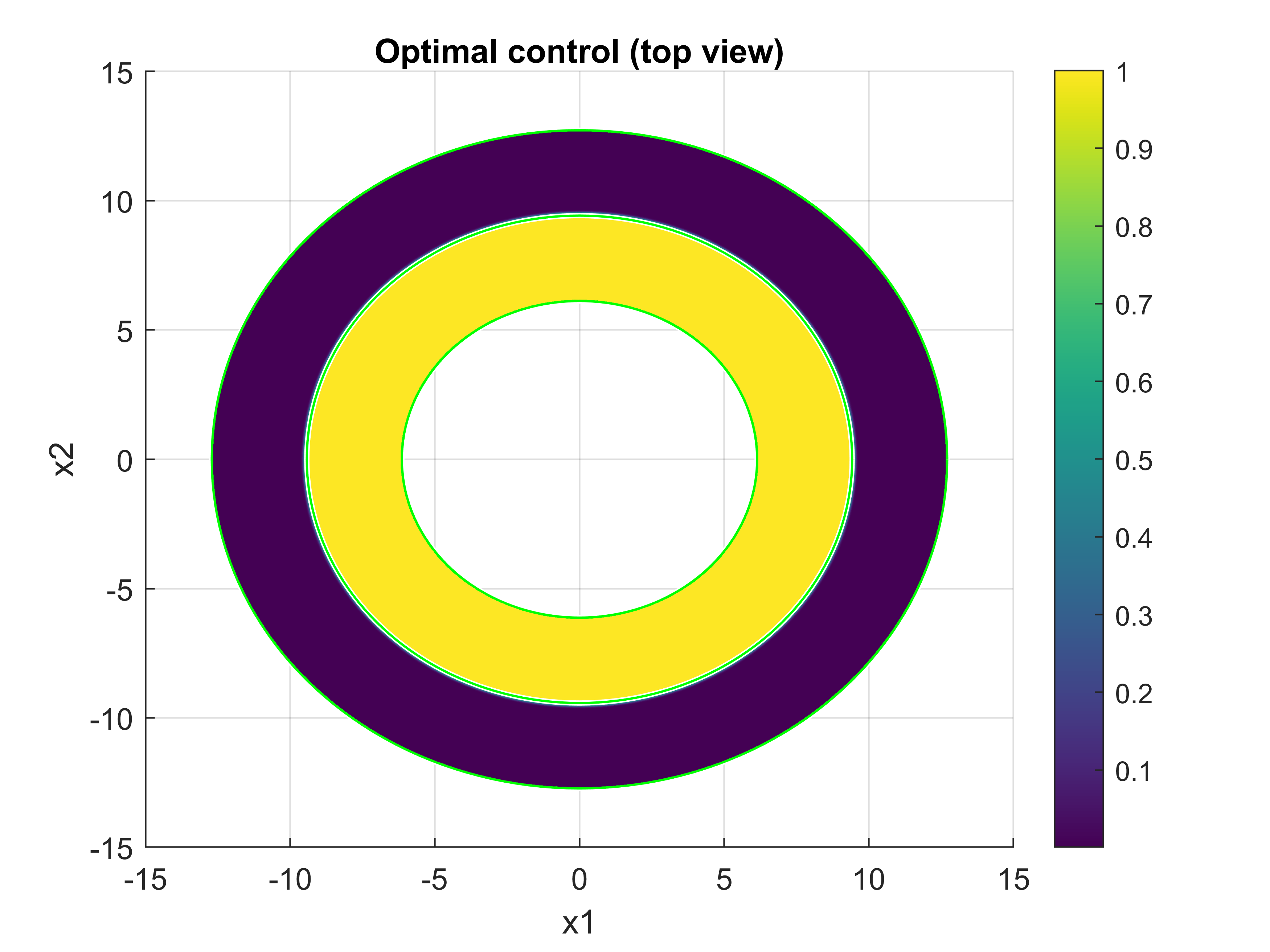}
	\caption{$\bar u_h$ with circles of radii $j\pi$, $j\in\{2,3,4\}$} 
	\end{subfigure}

	\bigskip

	\centering
	\begin{subfigure}[c]{.495\textwidth}
	\centering
	\includegraphics[width=\linewidth]{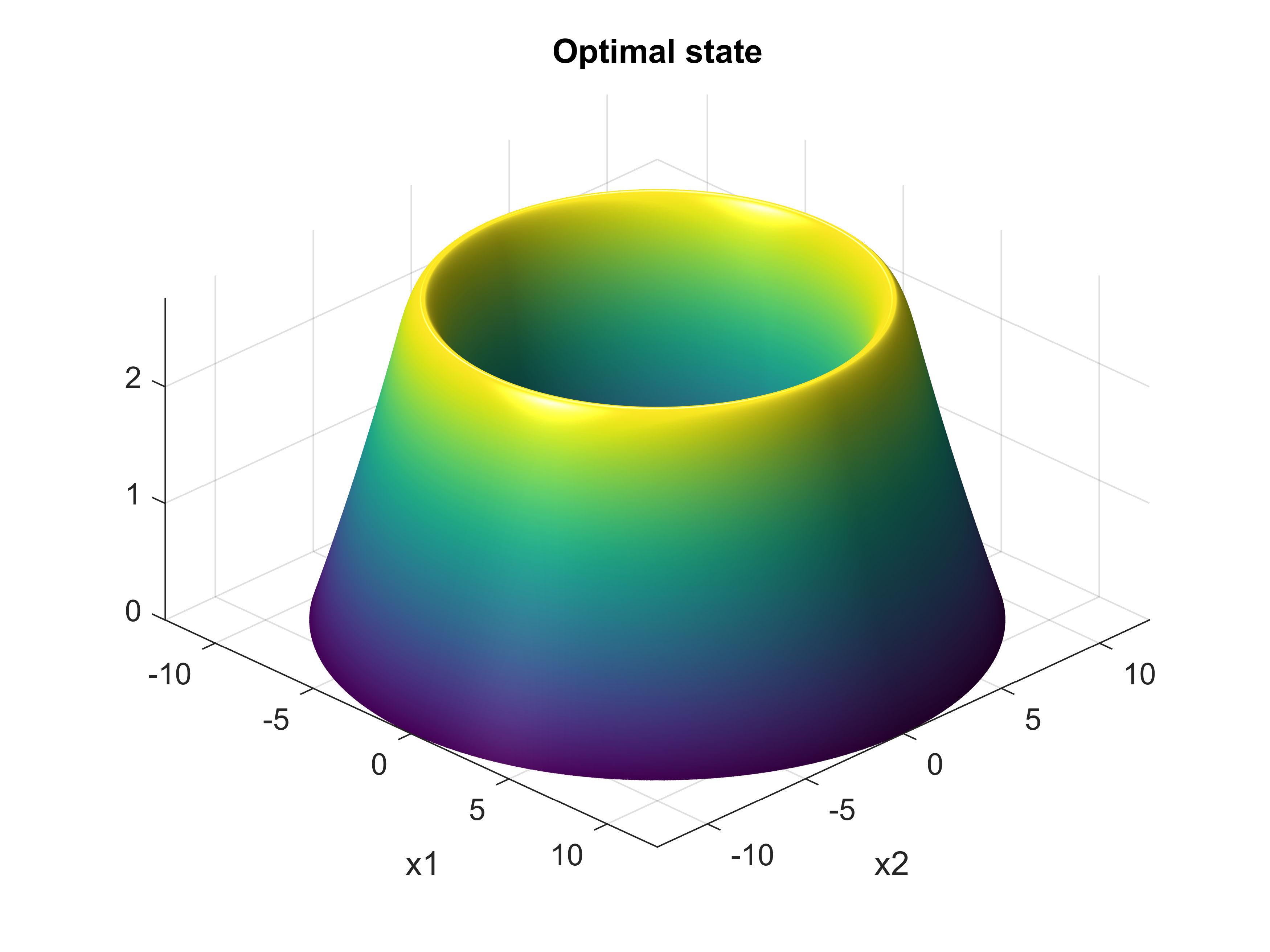}
	\caption{$\bar y_h$}
	\end{subfigure}
	\hfill
	\begin{subfigure}[c]{.495\textwidth}
	\centering
	\includegraphics[width=\linewidth]{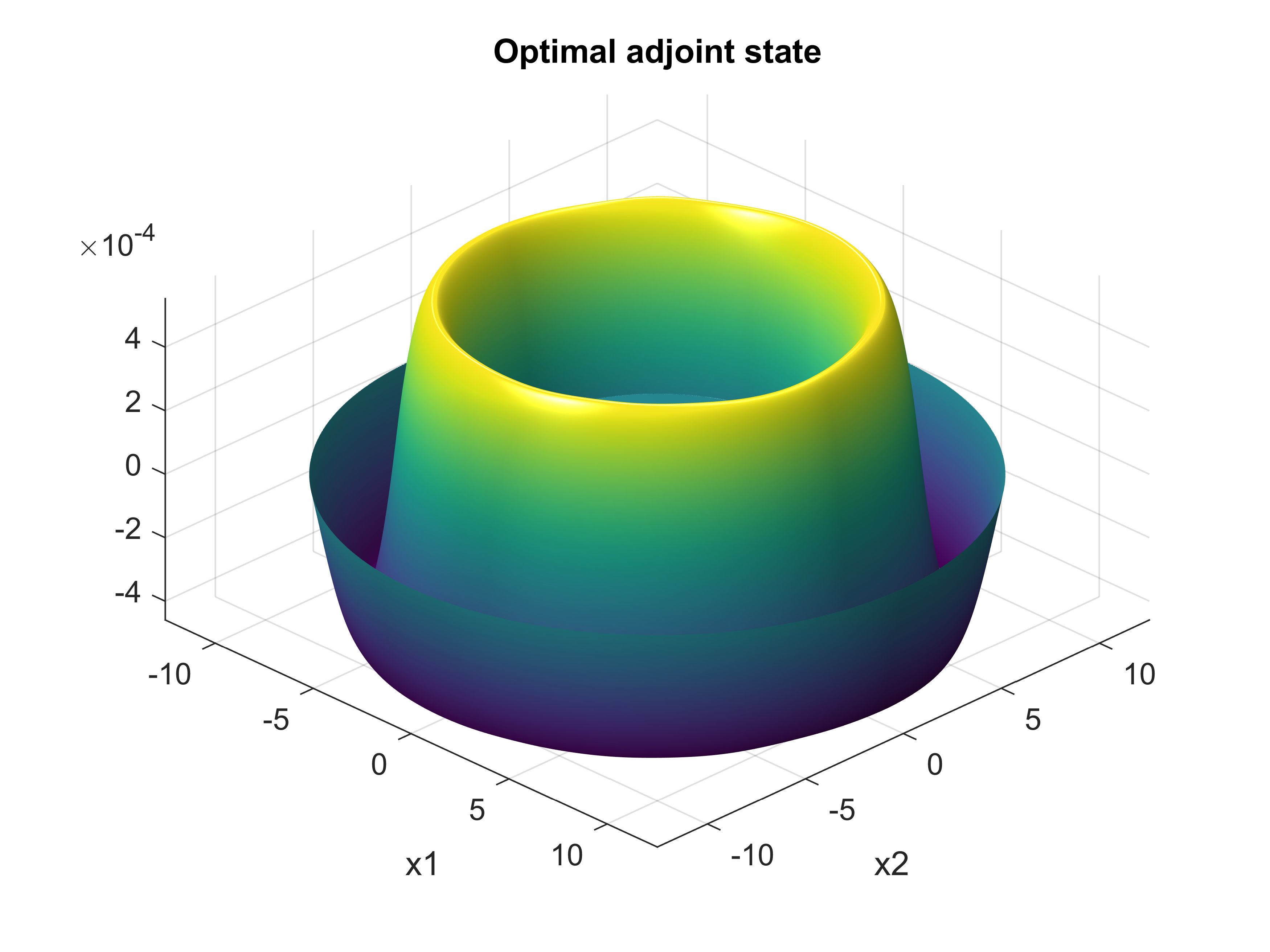}
	\caption{$-\bar p_h$} 
\end{subfigure}
\caption{Numerically computed optimal solutions for Example~1}\label{fig_ex1}
\end{figure}

We begin by studying convergence on several grids. We use the fixed ratio $(\gamma_i/\delta_i)\equiv 10^2$ and apply Algorithm~\ref{alg:pfdisc} with $(\gamma_0,\delta_0)=(1,0.01)$ and $(\hat y_0,\hat p_0)=(0,0)$. 
Table~\ref{tab_ex1_meshind} shows \#it, which represents the total number of inexact Newton steps for $(y,p)$, and \#it$_u$, which is the total number of Newton steps used to compute the implicit function $u$. 
Table~\ref{tab_ex1_meshind} also contains the errors
\begin{equation*}
\calE_j := \bigl\lvert j_{\gamma_\fin,\delta_\fin,h}-\bar j \bigr\rvert, \qquad
\calE_u := \Norm[L^1(\Omega_\ast)]{\hat u_\fin-\bar u}, \qquad
\end{equation*}
as well as 
\begin{equation*}
\calE_y := \Norm[H^1(\Omega_\ast)]{\hat y_\fin-\bar y}, \qquad
\calE_p := \Norm[H^1(\Omega_\ast)]{\hat p_\fin-\bar p}.
\end{equation*}
where $\Omega_\ast$ represents a reference grid with $\DOF=1553207$. To evaluate the errors, $\hat u_\fin$, $\hat y_\fin$ and $\hat p_\fin$ are extended to $\Omega_\ast$ using extrapolation. 
Table~\ref{tab_ex1_courseofalg} provides details for the run from Table~\ref{tab_ex1_meshind} with $\DOF=97643$ and $\eta_k=\hat\eta_k$.
Table~\ref{tab_ex1_courseofalg} includes 
$\tau^i:=\norm[H^1(\Omega_h)]{(\hat y_{i+1},\beta^{-1}\hat p_{i+1})-(\hat y_i,\beta^{-1}\hat p_i)}$, which appears in the termination criterion of Algorithm~\ref{alg:pfdisc}, and $\tau_u^i:=\norm[L^2(\Omega_h)]{u_h(\hat p_{i+1})-u_h(\hat p_i)}$.

\begin{table}
\caption{Example~1: Number of Newton steps and errors for several meshes;
the first value is for the forcing term $\bar\eta_k$, the second for $\hat\eta_k$ (only shown if different)}
\label{tab_ex1_meshind}
\scalebox{0.95}{
\begin{tabular}{@{}llllllll@{}}
	\toprule
		DOF&$\gamma_{\fin}$&\#it&\#it$_u$&$\calE_j$&$\calE_u$&$\calE_y$&$\calE_p$\\\midrule
		$1588$&$1.2\times 10^{-8}/9.3\times 10^{-9}$&58/72
		&390/428%4686/3596
		&$3.4\times 10^{-2}$&$18.7$&$2.3$&$6.0\times 10^{-2}$\\%time: 383/384; 262:220<0.84
		$6251$&$1.7/1.6\times 10^{-10}$&78/91
		&597/608%5295/4141
		&$4.0\times 10^{-3}$&$7.3$&$1.1$&$3.3\times 10^{-2}$\\%time: 2033/1756; :1331<0.825
		$24443$&$2.1/1.5\times 10^{-11}$&55/64
		&454/491%3833/2816
		&$9.4\times 10^{-4}$&$5.1$&$0.50$&$1.3\times 10^{-2}$\\%time: 5980/5192; 3503:3710<0.945
		$97643$&$5.4/6.2\times 10^{-11}$&46/48
		&407/380%3252/2172
		&$3.3\times 10^{-4}$&$3.7$&$0.22$&$5.6\times 10^{-3}$\\%time: 15750/11970 ; 1197:1575<0.88
		$389027$&$4.6/4.3\times 10^{-10}$&32/34
		&367/358%2490/1595
		&$1.2\times 10^{-4}$&$2.8$&$0.09$&$2.9\times 10^{-3}$\\%time: 57170/45770; 4577:5717<0.
	\bottomrule
	\end{tabular}
}
\end{table}	

\begin{table}
	\begin{center}
		\caption{Example~1: Course of Algorithm~\ref{alg:pfdisc}}
		\label{tab_ex1_courseofalg}	
	\scalebox{0.76}{		
		\begin{tabular}{@{}llllllllll@{}}
			\toprule
			$i$&$\gamma_i$&$\sigma_i$&(\#it$^i$,\#it$^i_u$)&$\calE_j^i$&$\calE_u^i$&$\calE_y^i$&$\calE_p^i$&$\tau^i$&$\tau_u^i$\\\midrule
			0/1/2&$1.0/0.45/0.18$&$0.45/0.41/0.37$&$(0,0)$&$575$&$155$&$38.5$&$9.8\times 10^{-3}$&$0$&$0$\\
			3&$6.8\times 10^{-2}$&$0.33$&$(1,1)$&$5.7$&$44$&$1.8$&$1.4$&$1440$&$11.3$\\
			4&$2.2\times 10^{-2}$&$0.30$&$(1,2)$&$2.0$&$38$&$1.0$&$0.48$&$959$&$0.83$\\
			\vdots&\vdots&\vdots&\vdots&\vdots&\vdots&\vdots&\vdots&\vdots&\vdots\\
			12&$8.7\times 10^{-6}$&$0.32$&$(3,18)$&$1.8\times 10^{-3}$&$5.1$&$0.23$&$8.8\times 10^{-3}$&$3.5$&$0.22$\\
			13&$2.8\times 10^{-6}$&$0.31$&$(3,20)$&$8.7\times 10^{-4}$&$4.3$&$0.23$&$6.8\times 10^{-3}$&$2.0$&$0.15$\\
			14&$8.7\times 10^{-7}$&$0.28$&$(3,18)$&$5.1\times 10^{-4}$&$3.9$&$0.22$&$6.0\times 10^{-3}$&$0.84$&$0.073$\\
			15&$2.4\times 10^{-7}$&$0.26$&$(5,20)$&$3.8\times 10^{-4}$&$3.7$&$0.22$&$5.7\times 10^{-3}$&$0.32$&$0.027$\\
			16&$6.4\times 10^{-8}$&$0.25$&$(3,15)$&$3.5\times 10^{-4}$&$3.7$&$0.22$&$5.6\times 10^{-3}$&$0.13$&$8.2\times 10^{-3}$\\	
			17&$1.6\times 10^{-8}$&$0.25$&$(3,16)$&$3.3\times 10^{-4}$&$3.7$&$0.22$&$5.6\times 10^{-3}$&$0.081$&$2.2\times 10^{-3}$\\	
			18&$4.0\times 10^{-9}$&$0.25$&$(3,15)$&$3.3\times 10^{-4}$&$3.7$&$0.22$&$5.6\times 10^{-3}$&$0.060$&$7.0\times 10^{-4}$\\	
			19&$9.9\times 10^{-10}$&$0.25$&$(3,20)$&$3.3\times 10^{-4}$&$3.7$&$0.22$&$5.6\times 10^{-3}$&$0.045$&$3.6\times 10^{-4}$\\
			20&$2.5\times 10^{-10}$&$0.25$&$(3,22)$&$3.3\times 10^{-4}$&$3.7$&$0.22$&$5.6\times 10^{-3}$&$0.028$&$2.2\times 10^{-4}$\\
			21&$6.2\times 10^{-11}$&---&$(3,33)$&$3.3\times 10^{-4}$&$3.7$&$0.22$&$5.6\times 10^{-3}$&$0.017$&$1.4\times 10^{-4}$\\
			\bottomrule
		\end{tabular}
	}
	\end{center}
\end{table}	

Table~\ref{tab_ex1_meshind} indicates convergence of the computed solutions 
$(\hat u_\fin,\hat y_\fin,\hat p_\fin)$ to $(\bar u,\bar y,\bar p)$ 
and of the objective value $j_{\gamma_\fin,\delta_\fin,h}$ to $\bar j$. 
It also suggests that convergence takes place at certain rates with respect to $h$.

Moreover, the total number of Newton steps both for $(y,p)$ and for $u$ stays bounded as DOF increases, which suggests mesh independence. The choice $\eta_k=\bar\eta_k$ frequently yields lower numbers of Newton steps for $(y,p)$ and for $u$, yet the runtime (not depicted) is consistently higher than for $\eta_k=\hat\eta_k$ since more iterations of \gmres are required to compute the step for $(y,p)$. Specifically, using $\hat\eta_k$ saves between 5\% and 36\% of runtime, 
with $36\%$ being the saving on the finest grid. 
(Since the runtime depends on many factors, these numbers are intended as reference points rather than exact values.)
In the vast majority of iterations, step size $1$ is accepted for $(y_k,p_k)$. 
For instance, all of the 52 iterations required for $\DOF=97643$ and $\eta_k=\hat\eta_k$ use full steps; 
for $\DOF=6251$ and $\eta_k=\bar\eta_k$, 86 of the 87 iterations use step size 1.

Table~\ref{tab_ex1_sigma} displays the effect of fixing $(\sigma_i)\equiv \sigma$ in Algorithm~\ref{alg:pfdisc}. The mesh uses $\DOF=24443$ and is the same as in Table~\ref{tab_ex1_meshind}. 

\begin{table}
	\begin{center}
		\caption{Example~1: Results for fixed values $(\sigma_i)\equiv\sigma$; the first value is for the forcing term $\bar\eta_k$, the second for $\hat\eta_k$ (only shown if different)}
		\label{tab_ex1_sigma}
		\scalebox{0.99}{
		\begin{tabular}{@{}llllllll@{}}
			\toprule
			$\sigma$&$\gamma_\fin$&\#it&\#it$_u$&$\calE_j$&$\calE_u$&$\calE_y$&$\calE_p$\\\midrule
			0.2&$6.6\times 10^{-12}$&48/51&505/512&$9.4\times 10^{-4}$&$5.1$&$0.50$&$1.3\times 10^{-2}$\\%gmres: 3683/2686 time: 5707/4932
			0.3&$3.5\times 10^{-11}$&51/57&408/405&$9.4\times 10^{-4}$&$5.1$&$0.50$&$1.3\times 10^{-2}$\\%gmres: 3603/2606 time: 5581/4517
			0.5&$2.3\times 10^{-10}$&70/86&454/466&$9.4\times 10^{-4}$&$5.1$&$0.50$&$1.3\times 10^{-2}$\\%gmres: 4893/2927 time: 6999/5106
			0.7&$5.1\times 10^{-10}$&97/130&522/552&$9.4\times 10^{-4}$&$5.1$&$0.50$&$1.3\times 10^{-2}$\\%gmres: 6710/3584 time: 8894/6085
			0.9&$1.1/8.0\times 10^{-9}$&276/474&1261/1329&$9.4/9.5\times 10^{-4}$&$5.1$&$0.50$&$1.3\times 10^{-2}$\\%gmres: 15114/7200 time: 22620/13290
			\bottomrule
		\end{tabular}
	}
	\end{center}
\end{table}	

For $\sigma=0.1$ the iterates failed to converge for both forcing terms once $\gamma_{12}=10^{-12}$ is reached because $u_h(-p_k)$ could not be computed to sufficient accuracy within the 200 iterations that we allow for this process. 
Together with the case $\sigma=0.2$ in Table~\ref{tab_ex1_sigma} this shows that small values of $\sigma_i$ can increase the number of steps required for $u$ and even prevent convergence. We therefore enforce $\sigma_i\geq 0.25$ for all $i$ in all other experiments, although this diminishes the efficacy of Algorithm~\ref{alg:pfdisc} in some cases. 

We now turn to the robustness of Algorithm~\ref{alg:pfdisc}. We emphasize that in our numerical experience the robustness of algorithms for optimal control problems involving the TV seminorm in the objective is a delicate issue.
Table~\ref{tab_ex1_differentbeta} displays the iteration numbers required
by Algorithm~\ref{alg:pfdisc} for different values of $\beta$ on the mesh with $\DOF=24443$ along with the error $\calE_u$ for $\eta_k=\hat\eta_k$ for the two choices $(\gamma_i/\delta_i) \equiv 10^2$ and $(\gamma_i/\delta_i)\equiv 1$.
The omitted values for $\beta=10^{-3}$ and 
$(\gamma_i/\delta_i) \equiv 10^2$ are identical to those from Table~\ref{tab_ex1_meshind} for $\DOF=24443$ and $\eta_k=\hat\eta_k$.
Table~\ref{tab_ex1_differentkappa} provides iteration numbers and errors for various fixed choices of $(\gamma_i/\delta_i)$ on the mesh with $\DOF=24443$ for $\beta=10^{-3}$, $\eta_k=\bar\eta_k$ and $(\sigma_i)\equiv 0.5$. For the ratios $10^{-1}$ and $10^{-2}$ we increased $\kappa$ from $10^{-3}$ to $5\cdot 10^{-3}$ to obtain convergence. Since our goal is to demonstrate robustness, no further changes are made although this would lower the iteration numbers.

\begin{table}
	\begin{center}
		\caption{Example~1: Results for various values of $\beta$; the first line is for the choice $(\gamma_i/\delta_i)\equiv 10^2$, the second for $(\gamma_i/\delta_i)\equiv 1$}
		\label{tab_ex1_differentbeta}
		\begin{tabular}{@{}lllll@{}}
			\toprule
			$\beta$&$10^{-1}$&$10^{-2}$&%$10^{-3}$&
			$10^{-4}$&$10^{-5}$
			\\\midrule
			$(\#$it,$\#$it$_u)/\calE_u$
			&$(28,373)/21$%gam_final: 7.7\times 10^{-7}. gmres: 379. time: 2629
			&$(37,217)/7.2$%gam_final: 1.3\times 10^{-7}. gmres: 882. time: 1645
			%&$(64,491)/5.1$%gam_final: 1.5\times 10^{-11}. siehe Table~1.
			&$(100,914)/4.3$%gam_final: 2.5\times 10^{-10}. gmres: 5276. time: 8910
			&$(153,1230)/4.1$
			%gam_final: 1.3\times 10^{-10}. gmres: 12744. time: 15950
			\\
			$(\#$it,$\#$it$_u)/\calE_u$
			&$(40,435)/21$%gam_final: 4.5\times 10^{-7}. gmres: 397. time: 3106
			&$(78,795)/7.3$%gam_final: 3.5\times 10^{-8}. gmres: 1177. time: 5602
			&$(126,996)/4.3$%gam_final: 3.2\times 10^{-12}. gmres: 6341. time: 11090
			&$(137,1147)/4.1$
			%gam_final: 2.5\times 10^{-12}. gmres: 9799. time: 15750
			\\
			\bottomrule
		\end{tabular}
	\end{center}
\end{table}	

\begin{table}
	\begin{center}
		\caption{Example~1: Iteration numbers and errors for several ratios $\gamma_i/\delta_i$; the computations for $\gamma_i/\delta_i\in \{10^{-1},10^{-2}\}$ use a lower accuracy}    % ; the first value is for $(\sigma_i)\equiv 0.5$, the second for adaptive $\sigma_i$ (only shown if different)}
		\label{tab_ex1_differentkappa}	
		\begin{tabular}{@{}llllllll@{}}
			\toprule
			$\frac{\gamma_i}{\delta_i}$&$\delta_\fin$&\#it&\#it$_u$&$\calE_j$&$\calE_u$&$\calE_y$&$\calE_p$\\\midrule
			%$10^{-3}$&$/2.1\times 10^{-12}$&/160&/4339&$9.5\times 10^{-4}$&$5.1$&$0.50$&$1.3\times 10^{-2}$\\%gmres: /9118. time: /67300
			$10^{-2}$&$3.0\times 10^{-8}$ 
			&71&288&$9.9\times 10^{-4}$&$5.1$&$0.50$&$1.3\times 10^{-2}$\\%gmres: 5510. time: 7123.
			$10^{-1}$&$3.0\times 10^{-8}$ %/2.7\times 10^{-11}$
			&58 %/48
			&246 %/245
			&$9.9\times 10^{-4}$ %/9.5\times 10^{-4}
			&$5.1$&$0.50$&$1.3\times 10^{-2}$\\%gmres: 4617/4097. time: 5962/4707. $rtol=5\times 10^{-3}$ for first, 4\times 10^{-3}$ for second.
			$1$&$1.8\times 10^{-12}$ %/5.7\times 10^{-12}
			&102 %/91
			&469 %/3928
			&$9.4\times 10^{-4}$&$5.1$&$0.50$&$1.3\times 10^{-2}$\\%gmres: 7300/5248. time: 9246/89470
			$10^1$&$2.9\times 10^{-12}$ %/9.0\times 10^{-12}$
			&80   %/61
			&396   %/415
			&$9.4\times 10^{-4}$&$5.1$&$0.50$&$1.3\times 10^{-2}$\\%gmres: 5544/3897. time: 6702/5652.
			$10^{2}$&$2.3\times 10^{-12}$&70&454&$9.4\times 10^{-4}$&$5.1$&$0.50$&$1.3\times 10^{-2}$\\%gmres: 4893/2927 time: 6999/5106
			$10^3$&$1.9\times 10^{-12}$ %/3.2\times 10^{-10}$
			&59   %/47
			&475   %/442
			&$9.4\times 10^{-4}$&$5.1$&$0.50$&$1.3\times 10^{-2}$\\%gmres: 4177/3217. time: 5598/5313.
			\bottomrule
		\end{tabular}
	\end{center}
\end{table}	

Table~\ref{tab_ex1_differentbeta} and \ref{tab_ex1_differentkappa} suggest that Algorithm~\ref{alg:pfdisc} is able to handle a range of parameter values without modification of its internal parameters.

\subsection{Example~2}

From section~\ref{sec_regularity} onward we have used that $\Omega$ is of class $C^{1,1}$. 
To show that Algorithm~\ref{alg:pfdisc} can still solve \eqref{eq:ocintro} if $\Omega$ is only Lipschitz,
we consider an example from \cite[section~4.2]{Clason2011} on the square $\Omega = [-1,1]^2$.
We have $\calA=-\Delta$, $c_0\equiv 0$, $\beta=10^{-4}$ and $y_\Omega = 1_D$, where $1_D\colon\Omega\rightarrow\{0,1\}$ is the characteristic function of the square $D=(-0.5,0.5)^2$. 
We use uniform triangulations and denote by $n+1$ the number of nodes in coordinate direction.
Figure~\ref{fig_ex1} depicts the optimal control $\bar u_h$, optimal state $\bar y_h$ and negative optimal adjoint state $-\bar p_h$, which were computed with $n=1024$. Apparently, $\bar u_h$ is piecewise constant.
\begin{figure}
	\centering
\begin{subfigure}[c]{.495\textwidth}
	\centering
	\includegraphics[width=\linewidth]{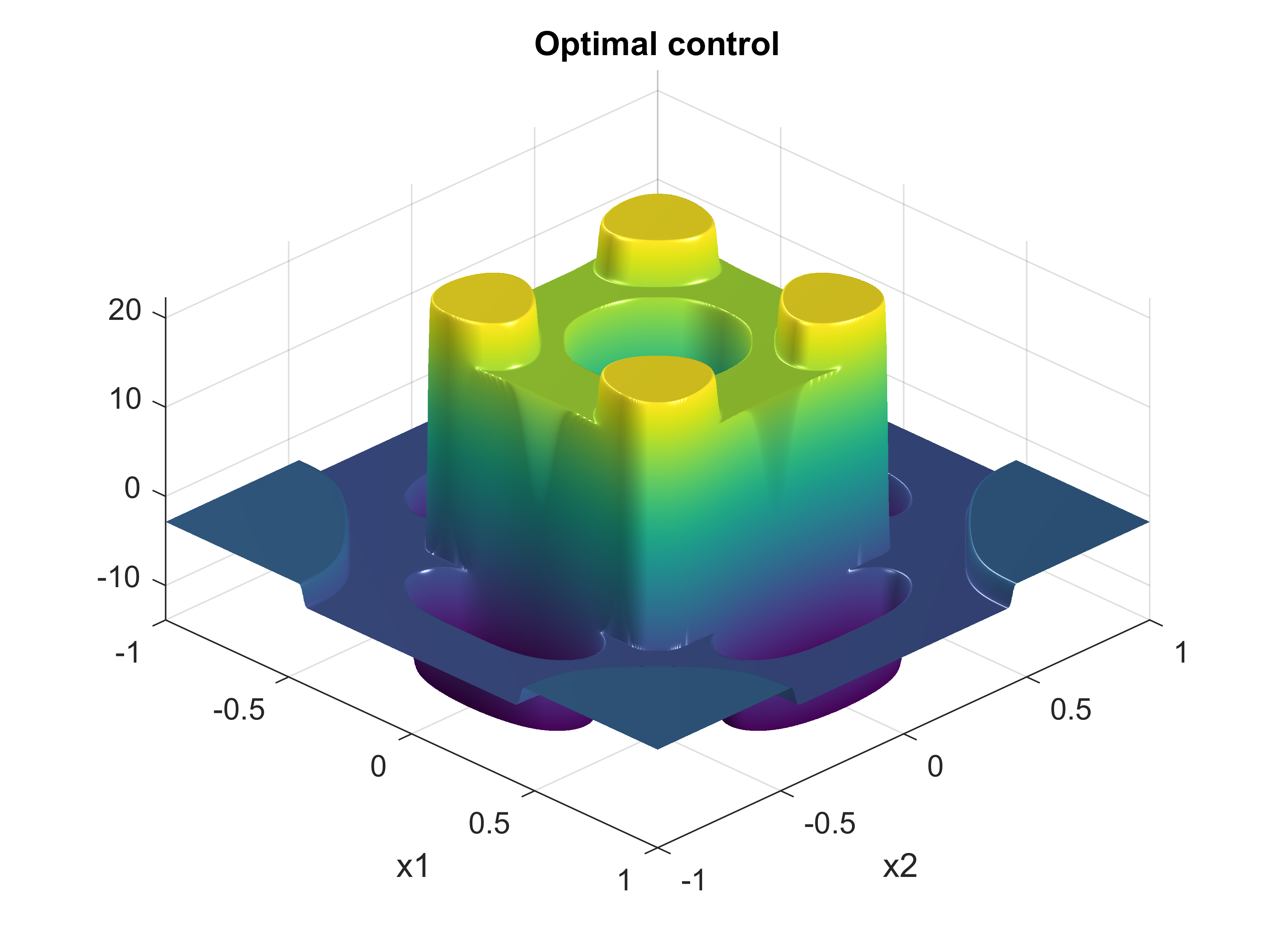}
	\caption{$\bar u_h$}
\end{subfigure}
\hfill
\begin{subfigure}[c]{.495\textwidth}
	\centering
	\includegraphics[width=\linewidth]{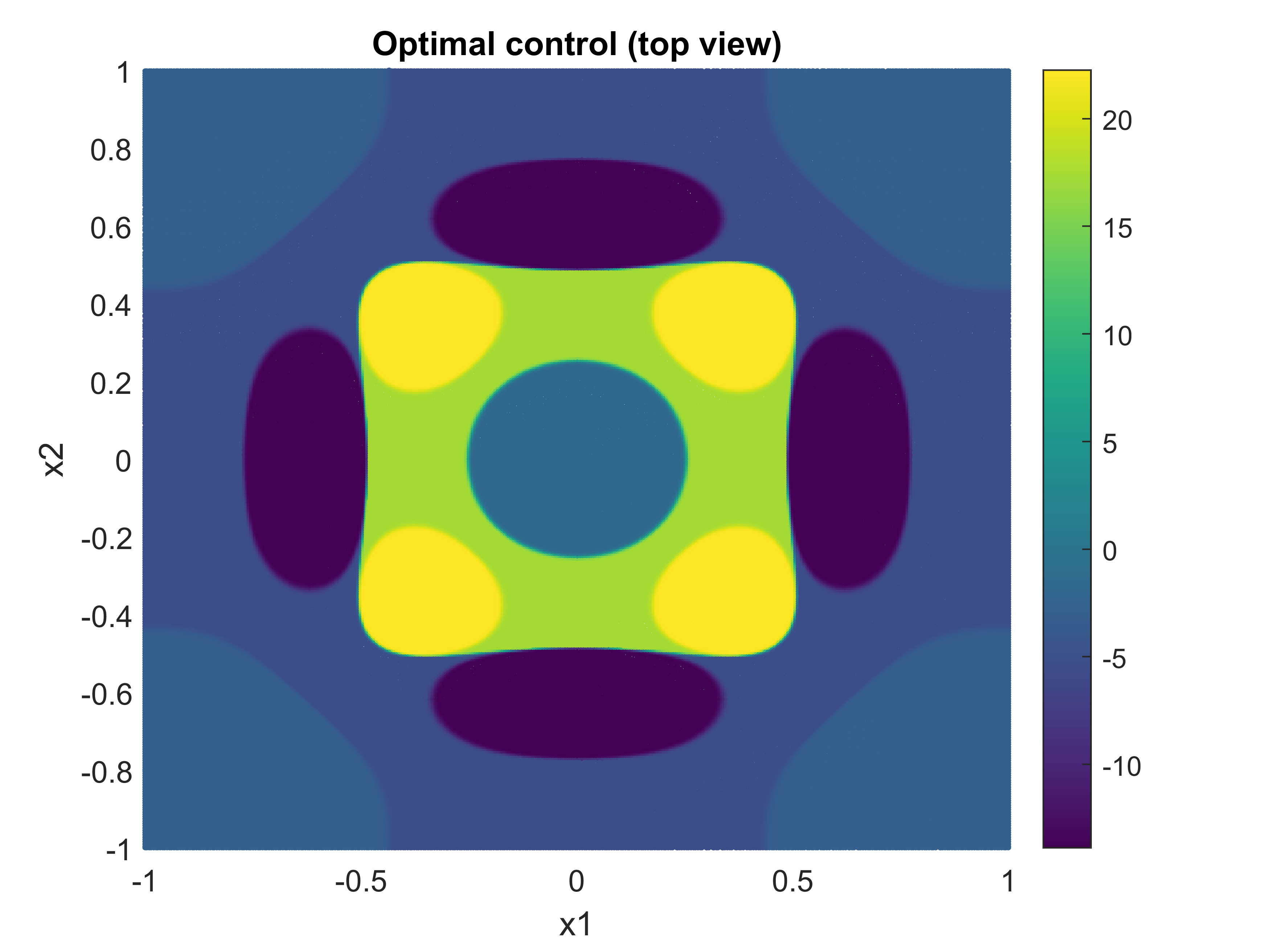}
	\caption{$\bar u_h$ (top view)} 
\end{subfigure}
	
	\bigskip
	
	\centering
	\begin{subfigure}[c]{.495\textwidth}
		\centering
		\includegraphics[width=\linewidth]{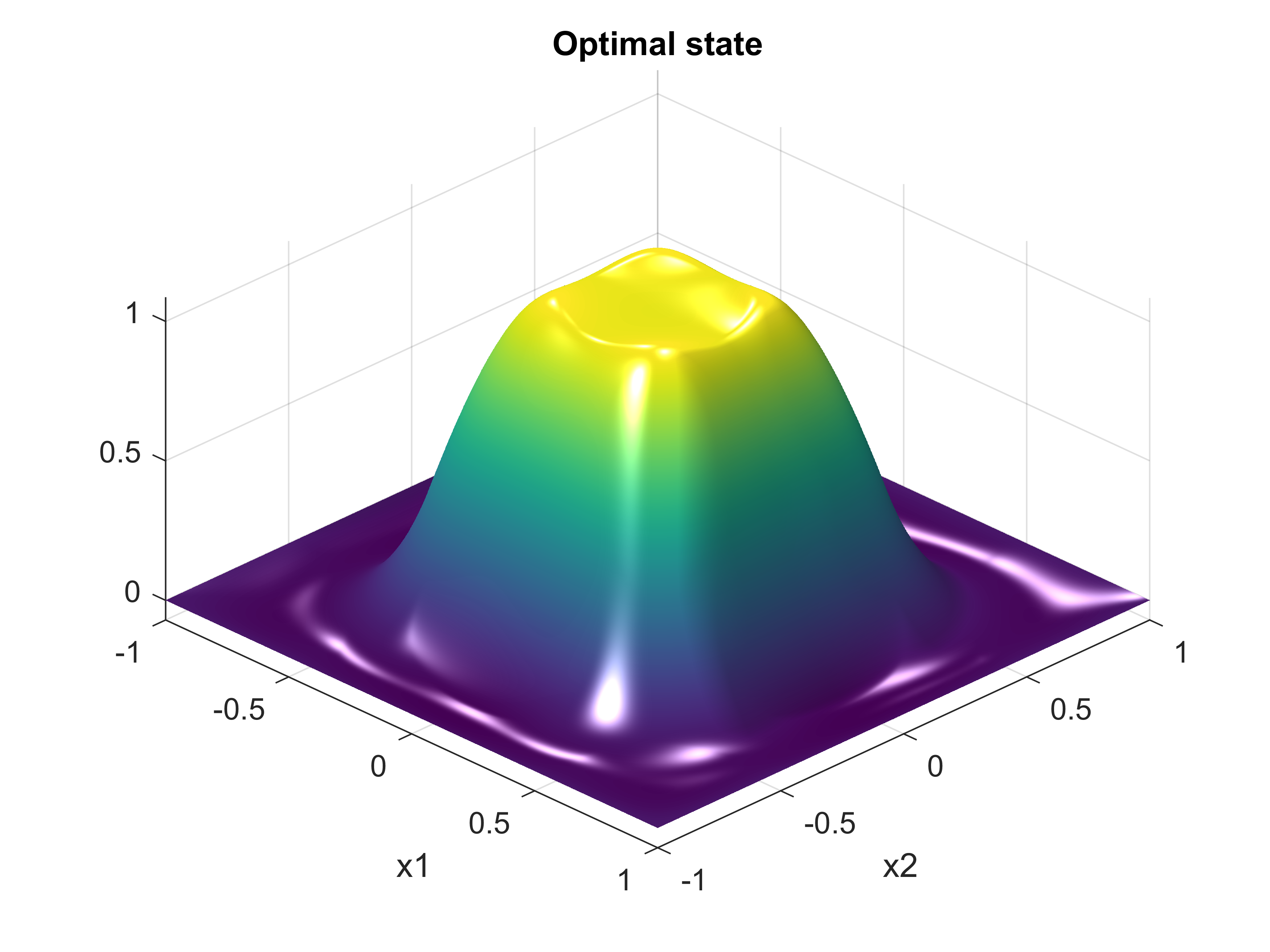}
		\caption{$\bar y_h$}
	\end{subfigure}
	\hfill
	\begin{subfigure}[c]{.495\textwidth}
		\centering
		\includegraphics[width=\linewidth]{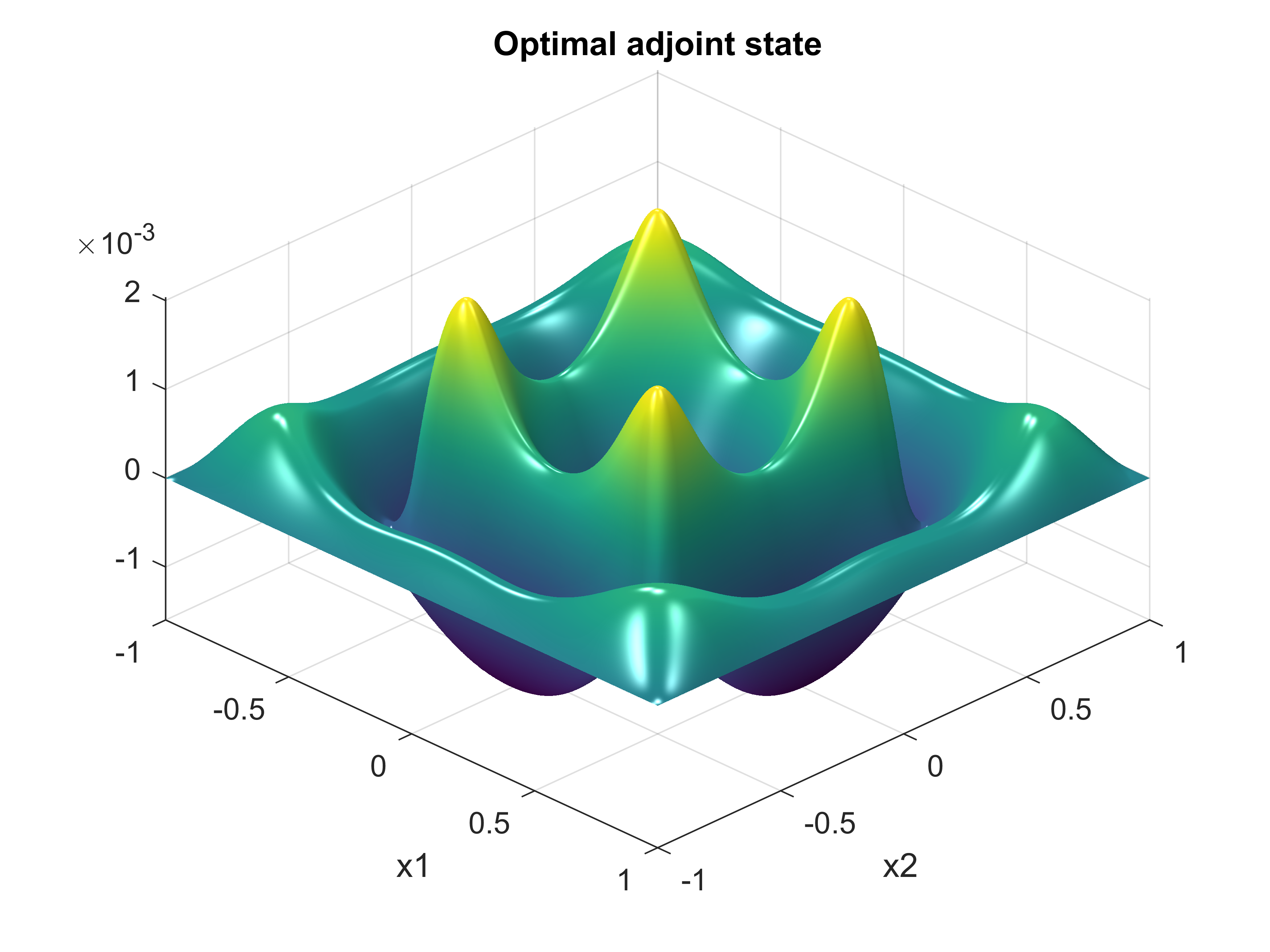}
		\caption{$-\bar p_h$} 
	\end{subfigure}
	\caption{Numerically computed optimal solutions for Example~2}\label{fig_ex2}
\end{figure}

Throughout, we use the fixed ratio $(\gamma_i/\delta_i)\equiv 10^{-2}$ and apply Algorithm~\ref{alg:pfdisc} with $(\gamma_0,\delta_0)=(0.01,1)$ and $(\hat y_0,\hat p_0)=(0,0)$. As in example~1, 
cf. Table~\ref{tab_ex1_differentkappa}, other ratios for $\gamma_i/\delta_i$ can be employed as well.
We only provide results for $\bar\eta_k$ since the forcing term $\hat\eta_k$ does not yield lower runtimes in this example; both forcing terms produce the same errors, though. 
Table~\ref{tab_ex2_meshind} displays iteration numbers and errors for different grids, while Table~\ref{tab_ex2_courseofalg} shows details for $n=256$.

\begin{table}
	\begin{center}
		\caption{Example~2: Number of Newton steps and errors for several meshes}
				%the first value is for $\bar\eta_k$, the second for $\hat\eta_k$ (only shown if different)}
		\label{tab_ex2_meshind}	
		\begin{tabular}{@{}llllllll@{}}
			\toprule
			$n$&$\gamma_{\fin}$&\#it&\#it$_u$&$\calE_j$&$\calE_u$&$\calE_y$&$\calE_p$\\\midrule
			32&$1.7\times 10^{-11}$&43 %/58
			&321 %/359 %1082/921
			&$1.8\times 10^{-2}$&$8.6$&$0.75$&$9.0\times 10^{-3}$\\%time: 76/79
			64&$9.8\times 10^{-12}$ %2.9 $
			&48%/65
			&551%/567%1276/1052
			&$9.9\times 10^{-3}$&$4.3$&$0.37$&$4.9\times 10^{-3}$\\%time: 493/487
			128&$3.2\times 10^{-11}$ %3.3
			&46%/74
			&902%/1023%1321/1235
			&$4.9\times 10^{-3}$&$2.3$&$0.19$&$2.4\times 10^{-3}$\\%time: 3401/3770
			256&$3.3\times 10^{-11}$ %$4.0$ 
			&50%/74
			&1212 %/1379   %1441/1242
			&$2.2\times 10^{-3}$&$1.1$&$0.081$&$1.1\times 10^{-3}$\\%time: 17470/20110
			512&$5.6\times 10^{-11}$ %3.9
			&58%/96
			&2868%/3044%1568/1359
			&$7.3\times 10^{-4}$
			&$0.42$%/0.37$
			&$0.031$&$4.2\times 10^{-4}$\\%time: 169000/182200
			\bottomrule
		\end{tabular}
	\end{center}
\end{table}	

\begin{table}
	\begin{center}
		\caption{Example~2: Course of Algorithm~\ref{alg:pfdisc}}
		\label{tab_ex2_courseofalg}	
		\scalebox{0.86}{		
			\begin{tabular}{@{}llllllllll@{}}
				\toprule
				$i$&$\gamma_i$&$\sigma_i$&(\#it$^i$,\#it$^i_u$)&$\calE_j^i$&$\calE_u^i$&$\calE_y^i$&$\calE_p^i$&$\tau^i$&$\tau_u^i$\\\midrule
				0--4&$0.01/\ldots$&$0.45/\ldots$&$(0,0)$&$0.42$&$34$&$3.4$&$1.7\times 10^{-2}$&$0$&$0$\\
				5&$6.7\times 10^{-5}$&$0.27$&$(2,7)$&$6.0\times 10^{-2}$&$27$&$1.9$&$4.2\times 10^{-2}$&$498$&$7.3$\\
				6&$1.8\times 10^{-5}$&$0.26$&$(2,37)$&$3.4\times 10^{-2}$&$23$&$1.6$&$2.4\times 10^{-2}$&$212$&$3.4$\\
				\vdots&\vdots&\vdots&\vdots&\vdots&\vdots&\vdots&\vdots&\vdots&\vdots\\
				11&$2.9\times 10^{-7}$&$0.55$&$(2,59)$&$3.1\times 10^{-3}$&$12$&$0.50$&$4.6\times 10^{-3}$&$16.4$&$1.6$\\
				12&$1.6\times 10^{-7}$&$0.55$&$(2,59)$&$1.6\times 10^{-3}$&$9.6$&$0.39$&$3.4\times 10^{-3}$&$12.6$&$1.6$\\
				13&$9.0\times 10^{-8}$&$0.53$&$(2,56)$&$4.4\times 10^{-4}$&$7.4$&$0.29$&$2.5\times 10^{-3}$&$9.5$&$1.4$\\
				14&$4.7\times 10^{-8}$&$0.53$&$(3,70)$&$4.5\times 10^{-4}$&$5.3$&$0.21$&$1.9\times 10^{-3}$&$7.6$&$1.4$\\
				\vdots&\vdots&\vdots&\vdots&\vdots&\vdots&\vdots&\vdots&\vdots&\vdots\\
				%20&$1.8\times 10^{-9}$&$0.52$&$(2,39)$&$2.0\times 10^{-3}$&$1.4$&$0.086$&$1.1\times 10^{-3}$&$0.87$&$0.42$\\
				21&$9.5\times 10^{-10}$&$0.52$&$(3,64)$&$2.1\times 10^{-3}$&$1.3$&$0.084$&$1.1\times 10^{-3}$&$0.62$&$0.32$\\
				22&$5.0\times 10^{-10}$&$0.47$&$(2,23)$&$2.1\times 10^{-3}$&$1.1$&$0.083$&$1.1\times 10^{-3}$&$0.38$&$0.24$\\
				23&$2.4\times 10^{-10}$&$0.45$&$(2,56)$&$2.1\times 10^{-3}$&$1.1$&$0.081$&$1.1\times 10^{-3}$&$0.28$&$0.27$\\
				24&$1.1\times 10^{-10}$&$0.59$&$(2,80)$&$2.1\times 10^{-3}$&$1.1$&$0.081$&$1.1\times 10^{-3}$&$0.15$&$0.17$\\
				25&$6.2\times 10^{-11}$&$0.53$&$(2,15)$&$2.2\times 10^{-3}$&$1.1$&$0.081$&$1.1\times 10^{-3}$&$0.062$&$0.052$\\
				26&$3.3\times 10^{-11}$&---&$(2,17)$&$2.2\times 10^{-3}$&$1.1$&$0.081$&$1.1\times 10^{-3}$&$0.042$&$0.037$\\
				\bottomrule
			\end{tabular}
		}
	\end{center}
\end{table}

Table~\ref{tab_ex2_meshind} hints at possible mesh independence for $(y,p)$, but indicates that the number of Newton steps for $u$ increases with $n$. The depicted errors are computed by use of a reference solution that is obtained by Algorithm~\ref{alg:pfdisc} with $\eta_k=\bar\eta_k$ on the mesh with $n=1024$.
As in the first example it seems that convergence with respect to $h$ takes place at certain rates. 
The majority of iterations use full Newton steps for $(y,p)$. For instance, all but one of the 50 iterations for $n=256$ use step length one.
We also repeated the experiments from Table~\ref{tab_ex2_meshind} with $y_\Omega$ rotated by $30$, respectively, $45$ degree. 
The omitted results are similar to those in Table~\ref{tab_ex2_meshind}, which further illustrates the robustness of our approach. 

Table~\ref{tab_ex2_nestedgrids} shows the outcome of Algorithm~\ref{alg:pfdisc} if a sequence of nested grids is used, where the grids are refined once $\gamma_i<10^{-4}$, $\gamma_i < 10^{-6}$ and $\gamma_i<10^{-8}$, respectively. In this example, nesting reduces the runtime by about $57\%$ 
while providing the same accuracy as a run for $n=512$, cf. the last line of Table~\ref{tab_ex2_meshind}.

\begin{table}
	\begin{center}
		\caption{Example~2: Results for a sequence of nested grids}
		\label{tab_ex2_nestedgrids}	
		\begin{tabular}{@{}llllllll@{}}
			\toprule
			$n$&$\gamma_\fin$&\#it&\#it$_u$&$\calE_j$&$\calE_u$&$\calE_y$&$\calE_p$\\\midrule
			64&$4.0\times 10^{-5}$&5&26%29
			&$3.9\times 10^{-2}$&$25$&$1.8$&$3.4\times 10^{-2}$\\%time (on this grid): 19
			128&$4.8\times 10^{-7}$&12&260%202
			&$1.7\times 10^{-3}$&$14$&$0.64$&$6.3\times 10^{-3}$\\%time (on this grid): 767
			256&$6.3\times 10^{-9}$&19&481%574
			&$1.7\times 10^{-3}$&$2.1$&$0.10$&$1.2\times 10^{-3}$\\%time (on this grid): 6765
			512&$5.6\times 10^{-11}$&20&792%699
			&$7.3\times 10^{-4}$&$0.42$&$0.031$&$4.2\times 10^{-4}$\\%time (on this grid): 63720
			%Gesamtzeit: 71380; Verhältnis zur Gesamtzeit 169000 aus Table~\ref{tab_ex2_meshind}: 7138:16900<0.43
			\bottomrule
		\end{tabular}
	\end{center}
\end{table}	

Table~\ref{tab_ex2_differentbeta} addresses the robustness of Algorithm~\ref{alg:pfdisc} with respect to $\beta$. The computations are carried out on nested grids and the displayed iteration numbers are those for the finest grid, which has $n=128$. The reference solution is computed for $n=256$. The final grid change happens once $\gamma_i<10^{-8}$. 

\begin{table}
	\begin{center}
		\caption{Example~2: Results for various values of $\beta$.
			A sequence of nested grids is used and the displayed iteration numbers are for the finest grid only}
		\label{tab_ex2_differentbeta}	
		\scalebox{0.8}{
		\begin{tabular}{@{}lllll@{}}
			\toprule
			$\beta$   %&$10^{-2}$
			&$10^{-3}$&$10^{-4}$&$10^{-5}$&$5\times 10^{-6}$\\\midrule
			$(\#$it,$\#$it$_u)/\calE_j$
			%&$(6,26)/2.7\times 10^{-3}$ %gam_final=4.3\times 10^{-10}$. gmres: 9. time: 126
			&$(12,117)/3.3\times 10^{-3}$%gam_final=1.1\times 10^{-10}$. gmres: 90. time: 668.
			&$(22,355)/2.8\times 10^{-3}$%gam_final=2.7\times 10^{-11}$. gmres: 810. time: 2404.
			&$(70,958)/2.4\times 10^{-3}$ 
			%with last grid change for 10^{-9}: (41,638)/2.4\times 10^{-3}
			%gam_final: 3.0\times 10^{-13}/2.3\times 10^{-13}. gmres: 4591/3258. time: 6974/5316.
			&$(104,1569)/2.2\times 10^{-3}$ %gam_final: 8.7\times 10^{-14}. gmres: 8003. time: 11530.
			\\
			\bottomrule
		\end{tabular}
		}
	\end{center}
\end{table}	

Table~\ref{tab_ex2_differentbeta} indicates that Algorithm~\ref{alg:pfdisc} is robust with respect to $\beta$.
As in example~1 it is possible to achieve lower iteration numbers through manipulation of the algorithmic parameters. For instance, if the final grid change for $\beta=10^{-5}$ happens once $\gamma_i<10^{-9}$ instead of $\gamma_i<10^{-8}$, then only $(41,638)$ iterations are needed on the final grid instead of $(70,958)$. 

\section{Summary}\label{sec_sum}

We have studied an optimal control problem
with controls from BV in which the control costs are given by the TV seminorm, favoring piecewise constant controls. 
By smoothing the TV seminorm and adding the $H^1$ norm we obtained a family of auxiliary problems whose solutions converge to the optimal solution of the original problem in appropriate function spaces. For fixed smoothing and regularization parameter we showed local convergence of an infinite-dimensional inexact Newton method applied to a reformulation of the optimality system that involves the control as an implicit function of the adjoint state.
Based on a convergent Finite Element approximation a practical path-following algorithm was derived, and it was demonstrated that the algorithm is able to robustly compute the optimal solution of the control problem with considerable accuracy. To verify this, a two-dimensional test problem with known solution was constructed. 

\appendix

\section{Differentiability of \texorpdfstring{$\mathbf{\psi_\delta}$}{the smoothed TV seminorm}}\label{sec_differentiabilityofpsidelta}

The following result follows by standard arguments, but we provide its proof for convenience. 

\begin{lemma}\label{thm:psiderivativeNEU}
	Let $\delta>0$, $N\in\N$ and let $\Omega\subset\R^N$ be open. The functional
	\begin{equation*}
	\psi_\delta: H^1(\Omega)\rightarrow\R, \qquad 
	u\mapsto\int_\Omega \sqrt{\delta+\lvert\nabla u\rvert^2}\,\dx 
	\end{equation*}
	is Lipschitz continuously \Frechet differentiable and twice \Gateaux differentiable. Its first derivative at $u$ in direction $v$ and its second derivative at $u$ in directions $v,w$ are given by
	\begin{equation*}
	\psi_\delta^\prime(u)v = \int_\Omega \frac{ \left(\nabla u, \nabla v\right) }{\sqrt{\delta + \rvert\nabla u\lvert^2 } } \dx
	\quad\text{and}\quad
	\psi_\delta^{\prime\prime}(u)[v,w] = \int_\Omega \frac{ \left( \nabla v, \nabla w \right)}{ \sqrt{ \delta + \lvert\nabla u\rvert^2 } } - \frac{ \left(\nabla u, \nabla v \right) \left(\nabla u, \nabla w \right)  }{ \left( \delta + \lvert\nabla u\rvert^2 \right)^{\frac{3}{2}} } \dx.
	\end{equation*}
\end{lemma}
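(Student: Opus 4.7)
The plan is to pass everything through the smooth integrand $g:\R^N\to\R$ with $g(v)=\sqrt{\delta+|v|^2}$. Because $\delta>0$, the function $g$ is $C^\infty$ with $g^\prime(v)=v/\sqrt{\delta+|v|^2}$ satisfying $|g^\prime(v)|\leq 1$, and $g^{\prime\prime}(v)=I/\sqrt{\delta+|v|^2}-vv^T/(\delta+|v|^2)^{3/2}$ satisfying $\lVert g^{\prime\prime}(v)\rVert\leq 2/\sqrt{\delta}$ uniformly in $v\in\R^N$, and analogously for $g^{\prime\prime\prime}$. These uniform pointwise bounds are what will let me lift pointwise identities and estimates to the integral functional on $H^1(\Omega)$.

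For \Frechet differentiability at $u\in H^1(\Omega)$, I take the candidate derivative $L(v):=\int_\Omega g^\prime(\nabla u)\cdot\nabla v\dx$. Since $|g^\prime|\leq 1$, Cauchy--Schwarz shows that $L\in H^1(\Omega)^\ast$ with $\lVert L\rVert\leq \lVert 1\rVert_{L^2(\Omega)}$ when $\Omega$ has finite measure, and in any case $|L(v)|\leq \lVert\nabla v\rVert_{L^2(\Omega)}$. For the remainder I apply Taylor's theorem pointwise in $\Omega$:
\begin{equation*}
	g(\nabla u+\nabla v)-g(\nabla u)-g^\prime(\nabla u)\cdot\nabla v=\int_0^1 (1-t)\,g^{\prime\prime}\bigl(\nabla u+t\nabla v\bigr)[\nabla v,\nabla v]\dt.
\end{equation*}
Integrating over $\Omega$ and using $\lVert g^{\prime\prime}\rVert_\infty\leq 2/\sqrt{\delta}$ yields $|\psi_\delta(u+v)-\psi_\delta(u)-L(v)|\leq (1/\sqrt{\delta})\,\lVert\nabla v\rVert_{L^2(\Omega)}^2=O(\lVert v\rVert_{H^1(\Omega)}^2)$, which establishes \Frechet differentiability with the claimed first derivative.

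For Lipschitz continuity of $\psi_\delta^\prime$, for $u_1,u_2,v\in H^1(\Omega)$ I write
\begin{equation*}
	\psi_\delta^\prime(u_1)v-\psi_\delta^\prime(u_2)v=\int_\Omega \bigl[g^\prime(\nabla u_1)-g^\prime(\nabla u_2)\bigr]\cdot\nabla v\dx,
\end{equation*}
apply the mean value theorem together with $\lVert g^{\prime\prime}\rVert_\infty\leq 2/\sqrt{\delta}$ to obtain the pointwise bound $|g^\prime(\nabla u_1)-g^\prime(\nabla u_2)|\leq (2/\sqrt{\delta})|\nabla u_1-\nabla u_2|$, and then use Cauchy--Schwarz to conclude $\lVert\psi_\delta^\prime(u_1)-\psi_\delta^\prime(u_2)\rVert_{H^1(\Omega)^\ast}\leq (2/\sqrt{\delta})\lVert u_1-u_2\rVert_{H^1(\Omega)}$.

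Finally, for twice \Gateaux differentiability, I define the candidate bilinear form as in the statement and show $\lim_{t\to 0}t^{-1}\bigl[\psi_\delta^\prime(u+tw)v-\psi_\delta^\prime(u)v\bigr]=\psi_\delta^{\prime\prime}(u)[v,w]$ for fixed $u,v,w\in H^1(\Omega)$. The integrand $t^{-1}\bigl[g^\prime(\nabla u+t\nabla w)-g^\prime(\nabla u)\bigr]\cdot\nabla v$ converges pointwise a.e.\ to $g^{\prime\prime}(\nabla u)[\nabla w,\nabla v]$, and by the mean value theorem applied to $g^\prime$ it is dominated (uniformly in small $t$) by $(2/\sqrt{\delta})|\nabla w||\nabla v|$, which lies in $L^1(\Omega)$ by Cauchy--Schwarz. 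Dominated convergence delivers the second derivative formula. The only delicate point is the dominating function: since $\delta>0$, the bound $\lVert g^{\prime\prime}\rVert_\infty<\infty$ makes this step routine, but the argument would fail at $\delta=0$, which is precisely why the regularization is needed in the first place.
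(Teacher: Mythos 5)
Your proposal is correct; all the estimates check out, and the uniform bounds $|g^\prime|\le 1$, $\lVert g^{\prime\prime}\rVert_\infty\le 2/\sqrt{\delta}$ are exactly what the regularization buys. The route differs from the paper's mainly in how \Frechet differentiability is established. The paper first computes the \Gateaux derivative of $\psi_\delta$, then the \Gateaux derivative of $\psi_\delta^\prime$ (both by dominated convergence), observes that $\sup_u\lVert\psi_\delta^{\prime\prime}(u)\rVert\le 2/\sqrt{\delta}$, and from that deduces Lipschitz continuity of $\psi_\delta^\prime$ and hence \Frechet differentiability of $\psi_\delta$ by the standard ``continuous \Gateaux implies \Frechet'' argument. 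You instead obtain \Frechet differentiability directly: the pointwise second-order Taylor remainder together with $\lVert g^{\prime\prime}\rVert_\infty\le 2/\sqrt{\delta}$ gives the quantitative bound $\lvert\psi_\delta(u+v)-\psi_\delta(u)-L(v)\rvert\le\frac{1}{\sqrt{\delta}}\lVert\nabla v\rVert_{L^2(\Omega)}^2$, which is $o(\lVert v\rVert_{H^1(\Omega)})$. You then prove Lipschitz continuity of $\psi_\delta^\prime$ separately from the mean value inequality for $g^\prime$, and handle the second \Gateaux derivative by dominated convergence just as the paper does. Your variant is slightly more self-contained (no appeal to the continuous-\Gateaux-implies-\Frechet theorem) and yields the explicit quadratic remainder, whereas the paper's organization makes the Lipschitz constant fall out automatically from the uniform bound on the bilinear form. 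Both are sound; the underlying mechanism is identical.
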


\begin{proof}
	\textbf{First \Gateaux derivative} \\
	Let $u, v\in H^1(\Omega)$. 
	As $s \mapsto \sqrt{\delta + s}$ is Lipschitz on $[0,\infty)$ with constant $\frac{1}{2\sqrt{\delta}}$, we obtain for all $t\in[-1,1]$, $t\neq 0$, 
	\begin{equation}\label{eq_domconvprereq}
		\left\lvert \frac{\sqrt{ \delta + |\nabla u + t\nabla v|^2} -  \sqrt{\delta + \lvert\nabla u\rvert^2 } }{t}\right\rvert
		\leq \frac{\left\lvert \nabla v\right\rvert\cdot\left(2\left\lvert \nabla u\right\rvert + \left\lvert\nabla v\right\rvert\right)}{2\sqrt{\delta}} \qquad\text{ a.e. in }\Omega.
		\end{equation}
	Thus, we can apply the theorem of dominated convergence, which yields 
	\begin{equation*}
		%\psi_\delta^\prime(u)v = 
		\lim_{t\to 0} \frac{\psi_\delta(u + tv) - \psi_\delta(u)}{t} 
		= \int_\Omega \lim_{t\to 0} \frac{\sqrt{ \delta + |\nabla u + t\nabla v|^2} -  \sqrt{\delta + |\nabla u|^2 } }{t} \dx
		= \int_\Omega \frac{ \left(\nabla u, \nabla v\right) }{\sqrt{\delta + |\nabla u|^2 } } \dx.
		\end{equation*}
	From
	\begin{equation*}
		\left\lvert\int_\Omega \frac{ \left(\nabla u, \nabla v\right) }{\sqrt{\delta + \lvert\nabla u\rvert^2 } } \dx\right\rvert
		\leq \int_\Omega \left\lvert\frac{ \left(\nabla u, \nabla v\right) }{\sqrt{\delta + \lvert\nabla u\rvert^2 } } \right\rvert\dx
		\leq \frac{\lVert \nabla u\rVert_{L^2(\Omega)}\lVert \nabla v\rVert_{L^2(\Omega)}}{\sqrt{\delta}}
		\leq \frac{\lVert u\rVert_{H^1(\Omega)}\lVert v\rVert_{H^1(\Omega)}}{\sqrt{\delta}}
		\end{equation*}
	we see that the functional $v\mapsto\psi_\delta^\prime(u)v$ is linear and continuous.\\
	\textbf{Second \Gateaux derivative}\\
	Let $u, v, w \in H^1(\Omega)$. Since
	$g:\R^N\rightarrow\R$, $g(y):=\frac{(y,z)}{\sqrt{\delta + \lvert y\rvert^2}}$, with $z\in\R^N$ fixed, 
	is Lipschitz continuous on $\R^N$ with constant $\frac{2}{\sqrt{\delta}}\lvert z\rvert$, we obtain for all $t\in\R$, $t\neq 0$, 
	\begin{equation}\label{eq_domconvprereq2}
		\left\lvert\frac{1}{t}\right\rvert\left\lvert\frac{ \left( \nabla u + t \nabla w, \nabla v \right) }{ \sqrt{ \delta + \lvert\nabla u + t\nabla w\rvert^2 } } - \frac{ \left(\nabla u, \nabla v\right) }{\sqrt{\delta + \lvert\nabla u\rvert^2 } } \right\rvert
		\leq \frac{2}{\sqrt{\delta}}\lvert\nabla v\rvert \lvert\nabla w\rvert \qquad\text{ a.e. in }\Omega.
		\end{equation}
	Dominated convergence yields
	\begin{equation*}
		\begin{split}
			\lim_{t\to 0} \frac{\psi_\delta^\prime(u + tw)v - \psi_\delta^\prime(u)v}{t} 
			& = \int_{\Omega}\lim_{t\to 0}\frac{1}{t}\left(\frac{ \left( \nabla u + t \nabla w, \nabla v \right) }{ \sqrt{ \delta + \lvert\nabla u + t\nabla w\rvert^2 } } - \frac{ \left(\nabla u, \nabla v\right) }{\sqrt{\delta + \lvert\nabla u\rvert^2 } } \right)\dx\\
			& = \int_\Omega \frac{ \left( \nabla v, \nabla w \right)}{ \sqrt{ \delta + \lvert\nabla u\rvert^2 } } - \frac{ \left(\nabla u, \nabla v \right) \left(\nabla u, \nabla w \right)  }{ \left( \delta + \lvert\nabla u\rvert^2 \right)^{\frac{3}{2}} } \dx,
			\end{split}
		\end{equation*}
	where we used the directional derivative of $g$ to derive the last equality. 
	From \eqref{eq_domconvprereq2} we deduce the boundedness of the bilinear mapping $(v,w)\mapsto\psi_\delta^{\prime\prime}(u)[v,w]$ by
	\begin{equation}\label{eq_lipcontoffirstderivativeofpsidelta}
		\left\lvert\int_\Omega \frac{ \left( \nabla v, \nabla w \right)}{ \sqrt{ \delta + \lvert\nabla u\rvert^2 } } - \frac{ \left(\nabla u, \nabla v \right) \left(\nabla u, \nabla w \right)  }{ \left( \delta + \lvert\nabla u\rvert^2 \right)^{\frac{3}{2}} } \dx\right\rvert
		\leq \frac{2}{\sqrt{\delta}}\lVert v\rVert_{H^1(\Omega)}\lVert w\rVert_{H^1(\Omega)}.
		\end{equation}
	\textbf{Lipschitz continuous \Frechet differentiability}\\
	Denoting by
	$\norm[{\cal B}]{\cdot}$ the standard norm for bounded bilinear forms on $H^1(\Omega)\times H^1(\Omega)$, we infer from \eqref{eq_lipcontoffirstderivativeofpsidelta} that 
	$\sup_{u\in H^1(\Omega)}\norm[{\cal B}]{\psi_\delta^{\prime\prime}(u)}\leq \frac{2}{\sqrt{\delta}}$.
	This implies that $u\mapsto\psi_\delta^{\prime}(u)$ is Lipschitz with constant
	$\frac{2}{\sqrt{\delta}}$, hence 
	$u\mapsto\psi_\delta(u)$ is \Frechet differentiable. \qed
\end{proof}

\section{H\"older continuity for quasilinear partial differential equations}\label{sec_HoeldercontinuityforquasilinPDEs}

To prove results on the H\"older continuity of solutions to quasilinear elliptic PDEs, we first discuss linear elliptic PDEs.

\begin{theorem}\label{thm:linearregularity}
	Let $\alpha \in (0,1)$ and let $\Omega$ be a bounded $C^{1,\alpha}$ domain. Let $\gamma_0, \mu>0$ be given.  
	Let $A\in C^{0,\alpha}(\Omega,\R^{N\times N})$ be a uniformly elliptic matrix with ellipticity constant $\mu$ and let $\gamma \geq \gamma_0$. Let $a^0>0$ be such that $\gamma, \lVert A \rVert_{C^{0,\alpha}(\Omega)} \leq a^0$. Then there is a constant $C>0$ depending only on $\alpha$, $\Omega$, $N$, $\mu$, $a^0$ and $\gamma_0$ such that for any $p\in L^\infty(\Omega)$ and any $f\in C^{0,\alpha}(\Omega,\R^N)$ the unique weak solution $u$ to
	\begin{equation} \label{eq:linearpde2}
	\left\{
	\begin{aligned}
	-\divg (A\nabla u) +\gamma u  & = p - \divg(f) && \text{ in }\Omega,\\
	\partial_{A_\nu} u & = 0 && \text{ on }\Gamma,
	\end{aligned}
	\right.
	\end{equation}
	satisfies $u\in C^{1,\alpha}(\Omega)$ and 
	\begin{equation*}
	\lVert u \rVert_{C^{1,\alpha}(\Omega)} \leq C \left( \lVert p \rVert_{L^\infty(\Omega)} + \lVert f \rVert_{C^{0,\alpha}(\Omega)} \right).
	\end{equation*}
\end{theorem}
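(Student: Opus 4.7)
My plan is to reach the claim in three stages: an $H^1$ solution via Lax--Milgram, a uniform $L^\infty$-bound via Moser iteration, and a global $C^{1,\alpha}$ Schauder estimate for the conormal boundary-value problem. In each stage I track that the constants depend only on the quantities listed in the statement.

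\emph{Existence and $H^1$-bound.} The weak form of \eqref{eq:linearpde2} reads $\int_\Omega (A\nabla u,\nabla v)+\gamma uv\,dx=\int_\Omega pv+(f,\nabla v)\,dx$ for all $v\in H^1(\Omega)$, and the conormal boundary condition is built into the test-function space. The bilinear form is bounded by $a^0$ and, by the uniform ellipticity of $A$ together with $\gamma\ge\gamma_0>0$, coercive on $H^1(\Omega)$ with coercivity constant $\min(\mu,\gamma_0)$. Crucially no Poincar\'e inequality is used here, so the coercivity constant is independent of $\Omega$. Lax--Milgram then yields a unique $u\in H^1(\Omega)$ with $\|u\|_{H^1(\Omega)}\le C(\|p\|_{L^2(\Omega)}+\|f\|_{L^2(\Omega)})$.

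\emph{$L^\infty$-bound.} Testing the weak form against $\eta^2(u-k)_\pm$ for interior and boundary cut-offs $\eta$ and iterating in the Moser/De Giorgi fashion delivers $\|u\|_{L^\infty(\Omega)}\le C(\|p\|_{L^\infty(\Omega)}+\|f\|_{L^\infty(\Omega)})$, with $C$ depending only on $\Omega$, $N$, $\mu$, $a^0$, $\gamma_0$. Because the conormal (natural) boundary condition is encoded in the weak formulation, no boundary terms obstruct the truncation argument.

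\emph{Schauder estimate.} I rewrite the equation as $-\divg(A\nabla u)=g-\divg(f)$ with $g:=p-\gamma u\in L^\infty(\Omega)$ and invoke the global $C^{1,\alpha}$ Schauder estimate for divergence-form operators with conormal boundary data on $C^{1,\alpha}$ domains,
\begin{equation*}
\|u\|_{C^{1,\alpha}(\Omega)}\le C\bigl(\|u\|_{L^\infty(\Omega)}+\|g\|_{L^\infty(\Omega)}+\|f\|_{C^{0,\alpha}(\Omega)}\bigr),
\end{equation*}
and combine this with the $L^\infty$-bound from the previous stage to obtain the asserted inequality.

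The main obstacle is the Schauder step: it operates at the minimal regularity pairing (coefficients $C^{0,\alpha}$, domain $C^{1,\alpha}$) and with a conormal boundary condition whose direction inherits only the $C^{0,\alpha}$-regularity of the outward normal and of the coefficient matrix. I would execute it along the classical lines: flatten $\partial\Omega$ with a $C^{1,\alpha}$-diffeomorphism (after which the transformed operator is still divergence form with $C^{0,\alpha}$ coefficients and the conormal is a non-tangential $C^{0,\alpha}$ field on the flat boundary piece), freeze coefficients on small half-balls, apply the known constant-coefficient Neumann/half-ball estimate, and close a Campanato-type absorption argument via a finite covering of $\partial\Omega$. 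The explicit form of the frozen-coefficient and normalisation steps shows that every resulting constant depends only on $\alpha,\Omega,N,\mu,a^0,\gamma_0$, and the first two stages are routine.
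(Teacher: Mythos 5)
Your overall strategy is sound and would deliver the claim, but it takes a longer road than the paper's own proof. The Lax--Milgram step coincides with the paper's (and your observation that no Poincaré inequality is needed, so the coercivity constant is $\min(\mu,\gamma_0)$, is exactly right). After that the paper bypasses your De Giorgi/Moser stage and your frozen-coefficient Schauder argument entirely: it cites the Campanato-space estimate of \cite[Theorem~3.16(iii)]{troianello}, which for the conormal problem on a $C^{1,\alpha}$ domain with $C^{0,\alpha}$ coefficients gives
\begin{equation*}
\lVert u\rVert_{\mathcal{L}^{2,N+2\alpha}(\Omega)} + \lVert\nabla u\rVert_{\mathcal{L}^{2,N+2\alpha}(\Omega)}
\le C\bigl(\lVert p\rVert_{\mathcal{L}^{2,(N+2\alpha-2)^+}(\Omega)}+\lVert f\rVert_{\mathcal{L}^{2,N+2\alpha}(\Omega)}+\lVert u\rVert_{H^1(\Omega)}\bigr),
\end{equation*}
and then passes to Hölder spaces via the Campanato--Hölder isomorphism \cite[Theorem~1.17(ii)]{troianello}. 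The lower-order term there is $\lVert u\rVert_{H^1}$, already controlled by Lax--Milgram, so no separate $L^\infty$ bound is needed. Your decision to move $\gamma u$ to the right-hand side is what forces the $L^\infty$ stage; keeping $\gamma u$ on the left and letting the Campanato iteration absorb it makes that whole step superfluous. The real exposure in your proposal is the Schauder estimate itself: the global $C^{1,\alpha}$ bound for the conormal problem at the minimal regularity pairing (coefficients $C^{0,\alpha}$, domain $C^{1,\alpha}$, conormal direction only $C^{0,\alpha}$) is precisely what the cited Troianello theorem provides. Reconstructing it from scratch via boundary flattening, freezing, half-ball Neumann estimates, and a Campanato absorption is standard in outline but requires a lot of careful bookkeeping (in particular tracking that the half-ball estimate for the transformed, non-tangential $C^{0,\alpha}$ conormal yields constants depending only on the stated quantities), so you should either cite a concrete reference or expect a substantial expansion of that stage. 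Modulo that caveat, your argument is correct.
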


\begin{proof}
	We did not find a proof of Theorem~\ref{thm:linearregularity} in the literature, so we provide it here.\\	
	A standard ellipticity argument delivers unique existence and $\lVert u \rVert_{H^1(\Omega)} \leq C \lVert p \rVert_{L^\infty(\Omega)}$, where $C$ only depends on the claimed quantities. Moreover, by \cite[Theorem 3.16(iii)]{troianello} 
	\begin{equation*}
	\lVert u \rVert_{\mathcal{L}^{2,N+2\alpha}(\Omega)} + \lVert \nabla u \rVert_{\mathcal{L}^{2,N+2\alpha}(\Omega)} \leq C \Bigl( \lVert p \rVert_{\mathcal{L}^{2,(N+2\alpha-2)^+}(\Omega)} + \lVert f \rVert_{\mathcal{L}^{2,N+2\alpha}(\Omega)} + \lVert u \rVert_{H^1(\Omega)} \Bigr).
	\end{equation*}
	Here, $C$ depends on all of the claimed quantities except $\gamma_0$, and $\mathcal{L}^{2,\lambda}(\Omega)$ denotes a Campanato space; for details see \cite[Chapter~1.4]{troianello}. The definition of Campanato spaces implies $\lVert p \rVert_{\mathcal{L}^{2,(N+2\alpha-2)^+}(\Omega)}\leq C \lVert p \rVert_{L^\infty(\Omega)}$.  Using the isomorphism between $\mathcal{L}^{2,N+2\alpha}(\Omega)$ and $C^{0,\alpha}(\Omega)$ from \cite[Theorem 1.17~(ii)]{troianello} we obtain
	\begin{equation*}
	\lVert u \rVert_{C^{1,\alpha}(\Omega)} \leq C \left( \lVert p \rVert_{L^\infty(\Omega)} + \lVert f \rVert_{C^{0,\alpha}(\Omega)} + \lVert u \rVert_{H^1(\Omega)} \right).
	\end{equation*}
	The earlier ellipticity estimate concludes the proof. \qed 
\end{proof}

The next result concerns quasilinear PDEs. It follows directly from \cite[Theorem 2]{Liebermann1988}.
\begin{theorem}\label{thm:lieberalpha}
	Let $\Omega$ be a bounded $C^{1,\alpha^\prime}$ domain for some $\alpha^\prime \in (0,1]$. Let $A: \Omega\times\R\times\R^N\rightarrow \R^{N}$, $B: \Omega\times\R\times\R^N\rightarrow\R$, $M>0$ and $0 < \lambda \leq \Lambda$. Let $\kappa, m \geq 0$ and suppose that 
	\begin{align}
	& \sum_{i,j=1}^N \partial_{\eta_i} A_j(x,u,\eta) \xi_i\xi_j \geq \lambda \bigl(\kappa + |\eta|_2^2 \bigr)^m |\xi|_2^2, &&\text{(ellipticity)} \label{eq:Lk1}\\
	& \sum_{i,j=1}^N |\partial_{\eta_i} A_j(x,u,\eta)| \leq \Lambda \bigl(\kappa + |\eta|_2\bigr)^m, &&\text{(boundedness of $A$)} \label{eq:Lk2}\\
	& |B(x,u,\eta)| \leq \Lambda \bigl(1+|\eta|_2\bigr)^{m+2}, &&\text{(boundedness of $B$)} \label{eq:Lk4}
	\end{align}
	as well as the H\"older continuity property 
	\begin{equation}\label{eq:Lk3}
		|A(x_1,u_1,\eta) - A(x_2,u_2,\eta)| \leq \Lambda\bigl(1+|\eta|_2\bigr)^{m+1} \bigl( |x_1-x_2|^{\alpha^\prime} + |u_1-u_2|^{\alpha^\prime} \bigr)
	\end{equation}
	are satisfied for all $x,x_1,x_2\in\Omega$, $u,u_1,u_2 \in [-M, M]$ and $\eta,\xi\in\R^N$. Then there exist constants $\alpha \in (0,1)$ and $C>0$ such that each solution $u\in H^1(\Omega)$ of 
	\begin{equation*}
	\int_\Omega A(x,u,\nabla u)^T \nabla \varphi \dx = \int_\Omega B(x,u,\nabla u)\varphi\dx \qquad \forall\varphi\in H^1(\Omega)
	\end{equation*}
	satisfies
	\begin{equation*}
	\lVert u \rVert_{C^{1,\alpha}(\Omega)} \leq C.
	\end{equation*}
	Here, $C>0$ only depends on $\alpha^\prime$, $\Omega$, $N$, $\Lambda/\lambda$, $m$, and $M$, while $\alpha \in (0,1)$ only depends on $\alpha^\prime$, $N$, $\Lambda/\lambda$ and $m$.
\end{theorem}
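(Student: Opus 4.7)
The plan is to read off Theorem~\ref{thm:lieberalpha} as a direct translation of Lieberman's global $C^{1,\alpha}$ regularity theorem for weak solutions of quasilinear elliptic equations with conormal boundary conditions, so the ``proof'' reduces to a careful matching of the structural hypotheses. I will proceed in three stages. First, I will identify the weak formulation $\int_\Omega A(x,u,\nabla u)^T\nabla\varphi = \int_\Omega B(x,u,\nabla u)\varphi$ for all $\varphi\in H^1(\Omega)$ as the weak form of the conormal boundary value problem $-\divg A(x,u,\nabla u) = B(x,u,\nabla u)$ in $\Omega$ together with $A(x,u,\nabla u)\cdot\nu = 0$ on $\partial\Omega$, which is precisely the class covered by \cite[Thm.~2]{Liebermann1988}. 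Second, I will verify that the structural conditions there correspond one-to-one to \eqref{eq:Lk1}--\eqref{eq:Lk3}: the ellipticity assumption \eqref{eq:Lk1} is the standard $(\kappa+|\eta|^2)^m$-weighted ellipticity; the gradient bound \eqref{eq:Lk2} supplies the companion growth bound on the principal part; the Hölder condition \eqref{eq:Lk3} controls the $x$- and $u$-dependence of $A$; and \eqref{eq:Lk4} provides the natural $(1+|\eta|)^{m+2}$ growth of the lower-order term $B$. Third, once the hypotheses are matched, I will invoke Lieberman's theorem to conclude that $u\in C^{1,\alpha}(\Omega)$ together with the asserted bound $\|u\|_{C^{1,\alpha}(\Omega)}\le C$, where $\alpha$ depends only on $\alpha'$, $N$, $\Lambda/\lambda$, $m$, and $C$ additionally on $\Omega$ and $M$.

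Two auxiliary checks must accompany this. On the one hand, Lieberman's statement requires an a priori $L^\infty$-bound on $u$, which here is supplied by the assumption $u(x)\in[-M,M]$ that is built into \eqref{eq:Lk1}--\eqref{eq:Lk3}; thus no De~Giorgi / Moser boundedness step is needed inside the proof. On the other hand, the boundary regularity requires $\Omega\in C^{1,\alpha'}$, which is exactly our standing hypothesis. Both the interior Hölder estimate for $\nabla u$ and its extension up to $\partial\Omega$ under the conormal condition are contained in \cite{Liebermann1988}, so no additional reflection or flattening argument needs to be carried out here.

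The only mild obstacle is bookkeeping: Lieberman's formulation uses slightly different conventions (e.g., the pair of structural exponents and the way the dependence on $(x,u)$ is split between the principal part and the right-hand side), and one has to convince oneself that the dependence of $C$ on the data is as claimed, namely on $\alpha'$, $\Omega$, $N$, $\Lambda/\lambda$, $m$, $M$ only, and that the exponent $\alpha$ is independent of $\Omega$ and $M$. This is done by tracking through Lieberman's proof that the boundary Hölder exponent is produced by a scaling/iteration argument whose parameters involve only the ratio $\Lambda/\lambda$ and $m$ (hence independence of $\Omega$ and $M$), while the size of the constant $C$ accumulates factors from the diameter of $\Omega$, the $C^{1,\alpha'}$ character of $\partial\Omega$, and the amplitude $M$. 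With this verification in hand, the conclusion of Theorem~\ref{thm:lieberalpha} follows.
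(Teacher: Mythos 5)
Your proposal matches the paper exactly: the authors also simply state that Theorem~\ref{thm:lieberalpha} ``follows directly from \cite[Theorem 2]{Liebermann1988}'' and give no further proof, so your careful matching of the conormal weak formulation and the structural hypotheses to Lieberman's theorem is precisely the intended argument. The additional bookkeeping you perform (verifying which quantities $\alpha$ and $C$ depend on) is a reasonable elaboration of what the paper leaves implicit.
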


We collect elementary estimates for H\"older continuous functions.

\begin{lemma} \label{lem_hoeldercompositions}
	Let $\Omega\subset\R^N$ be nonempty, let $\alpha>0$, and let $f,g\in C^{0,\alpha}(\Omega)$. Then:
	\begin{itemize}
		\item[$\bullet$] $\lVert fg \rVert_{C^{0,\alpha}(\Omega)} \leq \lVert f \rVert_{C^{0,\alpha}(\Omega)} \lVert g \rVert_{C^{0,\alpha}(\Omega)}$.
		\item[$\bullet$] $\lVert \sqrt{\epsilon+f^2}\rVert_{C^{0,\alpha}(\Omega)} \leq 
		\sqrt{\epsilon}+\lVert f \rVert_{C^{0,\alpha}(\Omega)}$ for all $\epsilon>0$.
		\item[$\bullet$] If $\lvert f\rvert \geq \epsilon>0$ on $\Omega$ for some constant $\epsilon>0$, then there holds
		\begin{equation*}
		\left\lVert 1/f \right\rVert_{C^{0,\alpha}(\Omega)} \leq \epsilon^{-2} \lVert f \rVert_{C^{0,\alpha}(\Omega)} + \epsilon^{-1}.
		\end{equation*}
		\item[$\bullet$] $\lVert \,\lvert h \rvert \, \rVert_{C^{0,\alpha}(\Omega)}
		\leq \lVert h \rVert_{C^{0,\alpha}(\Omega,\R^N)}$
		for all $h\in C^{0,\alpha}(\Omega,\R^N)$.
		\item[$\bullet$] Let $N_i\in\N$ and let $U_i\subset\R^{N_i}$ be nonempty, $1\leq i\leq 4$.
		For $\phi\in C^{0,1}(U_2,U_3)$, $h\in C^{0,\alpha}(U_1,U_2)$ and $H\in C^{0,\alpha}(U_3,U_4)$ there hold
		\begin{equation*}
		\lVert\phi\circ h\rVert_{C^{0,\alpha}(U_1,U_3)}\leq 
		\lvert\phi\rvert_{C^{0,1}(U_2,U_3)} \lvert h\rvert_{C^{0,\alpha}(U_1,U_2)} + 
		\lVert\phi\rVert_{L^\infty(U_2,U_3)}
		\end{equation*}
		and
		\begin{equation*}
		\lVert H\circ\phi\rVert_{C^{0,\alpha}(U_2,U_4)}\leq \lvert\phi\rvert_{C^{0,1}(U_2,U_3)}^\alpha\lvert H\rvert_{C^{0,\alpha}(U_3,U_4)}+
		\lVert H\rVert_{L^\infty(U_3,U_4)}.
		\end{equation*}
	\end{itemize}
\end{lemma}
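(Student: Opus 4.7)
The plan is to verify each item through a direct computation using the definition $\|f\|_{C^{0,\alpha}(\Omega)} = \|f\|_{L^\infty(\Omega)} + |f|_{C^{0,\alpha}(\Omega)}$ with $|f|_{C^{0,\alpha}(\Omega)} = \sup_{x\neq y} |f(x)-f(y)|/|x-y|^\alpha$. None of the items require any non-trivial tool; the only care needed is in tracking the $L^\infty$-terms versus the seminorm-terms so that the resulting estimates are tight enough for the stated constants.

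For the product estimate I would write $f(x)g(x)-f(y)g(y) = (f(x)-f(y))g(x) + f(y)(g(x)-g(y))$, divide by $|x-y|^\alpha$, and take suprema to get $|fg|_{C^{0,\alpha}} \leq |f|_{C^{0,\alpha}}\|g\|_{L^\infty} + \|f\|_{L^\infty}|g|_{C^{0,\alpha}}$. Adding $\|fg\|_{L^\infty} \leq \|f\|_{L^\infty}\|g\|_{L^\infty}$ and factoring yields the asserted bound. For $\sqrt{\epsilon + f^2}$ I would use the pointwise identity $\sqrt{\epsilon+a^2}-\sqrt{\epsilon+b^2} = (a^2-b^2)/(\sqrt{\epsilon+a^2}+\sqrt{\epsilon+b^2})$, which shows this difference is bounded by $|a-b|$, so $|\sqrt{\epsilon+f^2}|_{C^{0,\alpha}} \leq |f|_{C^{0,\alpha}}$, and combine with $\|\sqrt{\epsilon+f^2}\|_{L^\infty} \leq \sqrt{\epsilon}+\|f\|_{L^\infty}$. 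For $1/f$ the identity $1/f(x)-1/f(y) = (f(y)-f(x))/(f(x)f(y))$ together with $|f|\geq \epsilon$ gives $|1/f|_{C^{0,\alpha}}\leq \epsilon^{-2}|f|_{C^{0,\alpha}}$ and $\|1/f\|_{L^\infty}\leq \epsilon^{-1}$. The absolute-value bound follows directly from the reverse triangle inequality $||h(x)|-|h(y)||\leq |h(x)-h(y)|$ applied pointwise, plus $\||h|\|_{L^\infty} = \|h\|_{L^\infty}$.

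For the two composition inequalities I would argue pointwise as well: in $\phi\circ h$ the Lipschitz property of $\phi$ gives $|\phi(h(x))-\phi(h(y))| \leq |\phi|_{C^{0,1}} |h(x)-h(y)| \leq |\phi|_{C^{0,1}}|h|_{C^{0,\alpha}}|x-y|^\alpha$, so $|\phi\circ h|_{C^{0,\alpha}}\leq |\phi|_{C^{0,1}}|h|_{C^{0,\alpha}}$, and $\|\phi\circ h\|_{L^\infty}\leq \|\phi\|_{L^\infty}$ is immediate from $h(U_1)\subset U_2$. For $H\circ \phi$ the Hölder exponent of $H$ and the Lipschitz exponent of $\phi$ combine via $|H(\phi(x))-H(\phi(y))|\leq |H|_{C^{0,\alpha}}|\phi(x)-\phi(y)|^\alpha \leq |H|_{C^{0,\alpha}}|\phi|_{C^{0,1}}^\alpha|x-y|^\alpha$, and again $\|H\circ\phi\|_{L^\infty}\leq \|H\|_{L^\infty}$.

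I do not anticipate any real obstacle: the statement is a catalogue of routine Hölder-norm inequalities, and the only ``trick'' is the pointwise factorization $(a^2-b^2)=(a-b)(a+b)$ used for $\sqrt{\epsilon+f^2}$. If I wanted to keep the proof short in the appendix, I would present the product and absolute-value bounds in one line each, give the $\sqrt{\epsilon+f^2}$ and $1/f$ bounds together with their one-line pointwise identities, and handle the two composition bounds simultaneously, noting at the outset that every estimate decomposes into an $L^\infty$ part and a seminorm part which are treated separately.
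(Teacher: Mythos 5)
Your proof is correct, and each of the pointwise identities you propose does give the stated constants. The only structural difference from the paper's proof is organizational: the paper proves the product rule and the fifth (composition) item directly, and then deduces items 2, 3, and 4 as corollaries of item 5 — for item 2 by applying the composition estimate to $\phi(t)=\sqrt{\epsilon+t^2}$ (Lipschitz with constant 1), for item 3 by applying it to $\phi(t)=|t|^{-1}$ on $U_2=f(\Omega)\subset\R\setminus(-\epsilon,\epsilon)$ (Lipschitz with constant $\epsilon^{-2}$), and for item 4 likewise. Your version instead proves each item by a self-contained one-line pointwise identity (the $(a^2-b^2)/(\sqrt{\epsilon+a^2}+\sqrt{\epsilon+b^2})$ factorization, the $1/f(x)-1/f(y)$ identity, the reverse triangle inequality). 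The two routes are logically equivalent and equally short; yours has the small advantage of not requiring item 5 to be proved first, while the paper's reuse of item 5 slightly reduces repetition. Either is acceptable for the appendix.
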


\begin{proof}
	\textbf{First claim:} Because of 
	$\lvert f(x)g(x) - f(y)g(y)\rvert 
	\leq \lvert f(x)\rvert\,\lvert g(x)-g(y)\rvert + \lvert g(y)\rvert\,\lvert f(x)-f(y)\rvert
	\leq ( \, 
	\lVert f \rVert_{L^\infty(\Omega)} \lvert g \rvert_{C^{0,\alpha}(\Omega)} + \lVert g \rVert_{L^\infty(\Omega)} \lvert f \rvert_{C^{0,\alpha}(\Omega)} \, ) |x-y|^\alpha$ for all $x,y\in\Omega$, we infer
	$\lvert f g\rvert_{C^{0,\alpha}(\Omega)} \leq 
	\lVert f \rVert_{L^\infty(\Omega)} \lvert g \rvert_{C^{0,\alpha}(\Omega)} + \lVert g \rVert_{L^\infty(\Omega)} \lvert f \rvert_{C^{0,\alpha}(\Omega)}$.
	Together with $\lVert fg\rVert_{L^\infty(\Omega)} \leq 
	\lVert f \rVert_{L^\infty(\Omega)} \lVert g \rVert_{L^\infty(\Omega)}$
	this implies the first claim.\\
	%\textbf{Second claim:} We have $\lvert f(x)-f(y)\rvert 
	%\leq \lvert f\rvert_{C^{0,\alpha}(\Omega)}\lvert x-y\rvert^\alpha
	%\leq \lvert %f\rvert_{C^{0,\alpha}(\Omega)}\diam(\Omega)^{\alpha-\hat\alpha}\lvert %x-y\rvert^{\hat\alpha}$ for all $x,y\in\Omega$. 
	%From this the claim follows easily.\\
	\textbf{Second claim:} 
	Since $\phi(t):=\sqrt{\epsilon + t^2}$ is Lipschitz continuous with constant $1$ in $\R$, the assertion follows from the fifth claim
	%, applied with $U_2:=\{f(x):\,x\in\Omega\}$, 
	by use of
	$\lVert\sqrt{\epsilon+f^2}\rVert_{L^\infty(\Omega)}\leq \sqrt{\epsilon}+\lVert f\rVert_{L^\infty(\Omega)}$.\\ 
	\textbf{Third claim:} 
	Since $\phi(t):=|t|^{-1}$ is Lipschitz continuous with constant $\epsilon^{-2}$ in $\R\setminus(-\epsilon,\epsilon)$, the assertion follows from the fifth claim, applied with $U_2:=\{f(x):\,x\in\Omega\}$, by use of
	$\lVert\phi\rVert_{L^\infty(U_2,U_3)}=\lVert \, \lvert f\rvert^{-1}\, \rVert_{L^\infty(\Omega)}\leq\epsilon^{-1}$.\\ 
	\textbf{Fourth claim:} 
	The assertion follows from the fifth claim. 
	%, applied with $\phi(v):=\lvert v\rvert$ and $U_2:=\{h(x):\,x\in\Omega\}$.
	\\
	%	For all $x\in\Omega$ we have 
	%	$\lvert h(x)\rvert_2 \leq (\sum_{i=1}^N \lVert h_i\rVert_{L^\infty(\Omega,\R)}^2)^{1/2}=\lVert h\rVert_{L^\infty(\Omega,\R^N)}$. 
	%	Together with 
	%	$\lvert \, \lvert h(x) \rvert_2 - \lvert h(y) \rvert_2 \, \rvert
	%	\leq \lvert h(x)-h(y)\rvert_2 \leq \lvert h \rvert_{C^{0,\alpha}(\Omega,\R^N)}\lvert x-y\rvert_2^\alpha$
	%	for all $x,y\in\Omega$, the assertion follows.\\
	%	\textbf{Sixth claim:}
	%	As $\lVert \, \lvert f \rvert \, \rVert_{C^{0,\alpha}(\Omega)}\leq \lVert f \rVert_{C^{0,\alpha}(\Omega)}$ for $f\in C^{0,\alpha}(\Omega,\R)$, it suffices to argue for
	%	$\lVert \det(h) \rVert_{C^{0,\alpha}(\Omega,\R^{N\times N})}$.	
	%	There is a polynomial $p:\R^{N^2}\rightarrow\R$ of degree at most $N$
	%	such that $p(a_{11},a_{12},\ldots,a_{NN})=\det(A)$ for all $A=(a_{ij})_{1\leq i,j\leq N}\in\R^{N\times N}$.
	%	Thus, $\det(h(x))=p(h_{11}(x),h_{12}(x),\ldots,h_{NN}(x))$ for all $x\in\Omega$. Using the triangle inequality and the first claim this implies the assertion.\\
	\textbf{Fifth claim:}
	For $x,y\in U_1$ we have
	\begin{equation*}
		\lvert\phi(h(x))-\phi(h(y))\rvert
		\leq \lvert\phi\rvert_{C^{0,1}(U_2,U_3)} \lvert h(x)-h(y)\rvert
		\leq \lvert\phi\rvert_{C^{0,1}(U_2,U_3)} \lvert h\rvert_{C^{0,\alpha}(U_1,U_2)}\lvert x-y\rvert^\alpha.
		\end{equation*}
	Together with $\lvert\phi(h(x))\rvert \leq \sup_{y\in U_2} \lvert\phi(y)\rvert = \lVert\phi\rVert_{L^\infty(U_2,U_3)}$ for all $x\in U_1$ we obtain the assertion for $\phi\circ h$. The assertion for $H\circ\phi$ can be established analogously. 
	\qed 
\end{proof}

We can now establish the desired regularity and continuity result for \eqref{eq:quasilinearpde}.

\begin{theorem} \label{thm:quasilinearls}
	Let $\Omega\subset\R^N$ be a bounded $C^{1,\alpha^\prime}$ domain for some $\alpha^\prime\in (0,1]$. Let $\beta>0$ and $\gamma^0 \geq \gamma \geq\gamma_0 > 0$ and $\delta^0\geq\delta\geq \delta_0>0$.
	By $u(p) \in H^1(\Omega)$ we denote for each $p\in L^\infty(\Omega)$ the unique weak solution of
	\begin{equation} \label{eq:thm:nonlinearpde}
	\left\{
	\begin{aligned}
	-\divg\Biggl( \Biggl[ \gamma +\frac{\beta}{\sqrt{\delta + \lvert\nabla u\rvert^2}} \Biggr] \nabla u \Biggr) + \gamma u & = p &&\text{ in }\Omega,\\
	\Biggl( \left[ \gamma+\frac{\beta}{\sqrt{\delta + \lvert\nabla u\rvert^2}} \right] \nabla u, \nu \Biggr) & = 0 &&\text{ on }\Gamma.
	\end{aligned}
	\right.
	\end{equation}
	Let $b^0>0$ be arbitrary and let $\Ball \subset L^\infty(\Omega)$ denote the open ball with radius $b_0$. There exists $\alpha\in(0,1)$ such that $u:\Ball\rightarrow C^{1,\alpha}(\Omega)$ is well-defined and Lipschitz continuous, i.e. $\lVert u(p_1)-u(p_2)\rVert_{C^{1,\alpha}(\Omega)}\leq L\lVert p_1-p_2\rVert_{L^\infty(\Omega)}$ for all $p_1,p_2\in \Ball\subset L^\infty(\Omega)$ and some $L>0$. The constants $L$ and $\alpha$ are independent of $\gamma$ and $\delta$, but may depend on $\alpha^\prime$, $\Omega$, $N$, $\beta$, $b^0$, $\gamma_0,\gamma^0,\delta_0$ and $\delta^0$.
\end{theorem}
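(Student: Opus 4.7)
I would organize the argument in three main steps: first construct $u(p)$ and establish a uniform $L^\infty$ bound; second, upgrade this to a uniform $C^{1,\alpha}$ bound by invoking the quasilinear Hölder result Theorem~\ref{thm:lieberalpha}; third, derive the Lipschitz estimate by linearizing the difference of the two PDEs and applying the Schauder-type estimate Theorem~\ref{thm:linearregularity}.

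For existence, note that \eqref{eq:thm:nonlinearpde} is the Euler--Lagrange equation of
\begin{equation*}
\Phi(v) := \tfrac{\gamma}{2}\Norm[\Hone]{v}^2 + \beta\,\psi_\delta(v) - (p,v)_{L^2(\Omega)},
\end{equation*}
which is continuous, strongly convex on $\Hone$ (using Lemma~\ref{thm:psiderivativeNEU} for the differentiability of $\psi_\delta$) and coercive since $\gamma\geq\gamma_0>0$, so a unique minimizer $u(p)\in\Hone$ exists. To obtain an $L^\infty$ bound I would use a Stampacchia truncation: test the weak form with $(u-M)^+\in\Hone$. The flux term is non-negative by ellipticity of $A(\eta):=[\gamma+\beta/\sqrt{\delta+\lvert\eta\rvert^2}]\eta$, while on $\{u>M\}$ one has $\gamma u(u-M)\geq \gamma M(u-M)$, giving $(\gamma M-\Norm[\Linf]{p})\int(u-M)^+\dx\leq 0$; hence $M=\Norm[\Linf]{p}/\gamma$ forces $u\leq M$, and symmetrically $u\geq -M$. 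Thus $\Norm[\Linf]{u(p)}\leq b^0/\gamma_0$ uniformly for $p\in\Ball$.

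To apply Theorem~\ref{thm:lieberalpha} with $m=0$ and $B(x,u,\eta):=p(x)-\gamma u$, I would verify its hypotheses. Since $A$ is independent of $(x,u)$, \eqref{eq:Lk3} is trivial. The $L^\infty$ bound from step one together with $p\in\Ball$ yields \eqref{eq:Lk4}. For \eqref{eq:Lk1}--\eqref{eq:Lk2} observe that $A'(\eta)=\gamma I+f'(\eta)$ where $f'(\eta)$ is positive semi-definite (being the Hessian of the convex map $\eta\mapsto\beta\sqrt{\delta+\lvert\eta\rvert^2}$), so $\lambda=\gamma_0$ works; and $f'$ is uniformly bounded on $\R^N$ because $\delta\geq\delta_0>0$, so $\Lambda$ depends only on $\gamma^0,\beta,\delta_0$. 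Theorem~\ref{thm:lieberalpha} then produces $\alpha\in(0,1)$ and $C_0>0$, depending only on the admissible constants, with $\Norm[C^{1,\alpha}(\Omega)]{u(p)}\leq C_0$ for every $p\in\Ball$; in particular $u:\Ball\to C^{1,\alpha}(\Omega)$ is well-defined and bounded.

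For the Lipschitz estimate, set $u_i:=u(p_i)$, $w:=u_1-u_2$, and write $A(\nabla u_1)-A(\nabla u_2)=\hat A\,\nabla w$ via the fundamental theorem of calculus with
\begin{equation*}
\hat A(x):=\int_0^1 A'\bigl(t\nabla u_1(x)+(1-t)\nabla u_2(x)\bigr)\dt.
\end{equation*}
Then $w$ solves the linear Neumann problem $-\divg(\hat A\nabla w)+\gamma w=p_1-p_2$ with $\hat A\nabla w\cdot\nu=0$ on $\partial\Omega$. Ellipticity of $\hat A$ with constant $\gamma_0$ is inherited pointwise from $A'=\gamma I+f'$. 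A uniform $C^{0,\alpha}(\Omega,\R^{N\times N})$ bound on $\hat A$ follows by combining the $C^{1,\alpha}$ bound on the $u_i$ from step two with Lemma~\ref{lem_hoeldercompositions}, using that $f'$ has bounded Lipschitz continuous derivative on $\R^N$ because $\delta\geq\delta_0>0$. Theorem~\ref{thm:linearregularity} (applied with $f\equiv 0$ and right-hand side $p_1-p_2$) then delivers $\Norm[C^{1,\alpha}(\Omega)]{w}\leq L\Norm[\Linf]{p_1-p_2}$ with $L$ depending only on the admissible parameters, completing the argument. The main obstacle is bookkeeping uniformity of constants over $(\gamma,\delta)\in[\gamma_0,\gamma^0]\times[\delta_0,\delta^0]$: the two positive lower bounds are essential, since $\gamma\geq\gamma_0$ preserves ellipticity and $\delta\geq\delta_0$ prevents blow-up of $f'$ and its derivatives which would otherwise destroy both the Hölder bound on $\hat A$ and the constant $\Lambda$ in Lieberman's hypothesis~\eqref{eq:Lk2}.
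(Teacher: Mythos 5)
Your proof is correct and follows essentially the same four-part structure as the paper: existence by minimizing the strongly convex energy, a uniform $L^\infty$ bound, Hölder regularity via Theorem~\ref{thm:lieberalpha}, and the Lipschitz estimate by linearizing the difference of the two equations with a fundamental-theorem-of-calculus representation of the coefficient matrix and then invoking Theorem~\ref{thm:linearregularity}.

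The one place you genuinely diverge from the paper is the $L^\infty$ bound. The paper tests the weak formulation against the family of powers $u_{i,M}^{2q-1}$, extracts the bound $\gamma_0\lVert u_{i,M}\rVert_{L^{2q}(\Omega)}\leq b^0\lvert\Omega\rvert^{1/(2q)}$, and sends $q\to\infty$ (a Moser-type argument), whereas you perform a single Stampacchia truncation test with $(u-M)^+$. Both routes yield the identical uniform bound $\lVert u(p)\rVert_{L^\infty(\Omega)}\leq\gamma_0^{-1}b^0$, but the truncation argument is shorter and avoids the passage to the limit in $q$; the Moser argument has the (here unused) advantage of producing quantitative $L^{2q}$ information along the way. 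Two small points worth noting: first, when you say ``$M=\Norm[\Linf]{p}/\gamma$ forces $u\leq M$'' you should retain the $\gamma\int((u-M)^+)^2$ term coming from $u(u-M)^+=((u-M)^+)^2+M(u-M)^+$, since otherwise the borderline choice of $M$ gives only the trivial inequality $0\leq 0$; keeping it the conclusion is immediate. Second, you correctly place the zero-order term in $B(x,u,\eta)=p(x)-\gamma u$ when matching \eqref{eq:thm:nonlinearpde} to the form required by Theorem~\ref{thm:lieberalpha}; the paper writes $B=p_i(x)$, which is a minor imprecision there (harmless since $\gamma u$ is uniformly bounded on $[-M,M]$, so \eqref{eq:Lk4} holds either way). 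Your bookkeeping of the dependence of $\lambda,\Lambda$ on $\gamma_0,\gamma^0,\beta,\delta_0,b^0$ matches the paper's, and the uniform $C^{0,\alpha}$ bound on the averaged coefficient matrix via Lemma~\ref{lem_hoeldercompositions} is exactly the content of the paper's Part 4, so the final application of Theorem~\ref{thm:linearregularity} goes through as you describe.
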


\begin{proof}
	We did not find a proof of Theorem~\ref{thm:quasilinearls} in the literature, so we provide it here. To this end, let $b^0>0$ and let $p_1,p_2\in L^\infty(\Omega)$ with $\lVert p_1 \rVert_{L^\infty(\Omega)}, \lVert p_2 \rVert_{L^\infty(\Omega)} < b^0$.\\	
	\textbf{Part 1: Showing existence of $\mathbf{u_1,u_2\in H^1(\Omega)}$.}\\
	For $i=1,2$ we define
	\begin{equation*}
		F_i \colon \Hone \rightarrow \R, \qquad 
		F_i(v):=\gamma \norm[\Hone]{v}^2 + \beta \int_\Omega \sqrt{ \delta + |\nabla v|^2 } \dx - (p_i,v)_{L^2(\Omega)}.
	\end{equation*}
	As $F_i$ is strongly convex, 
	it has a unique minimizer $u_i \in H^1(\Omega)$. 
	Since $F_i$ is \Frechet differentiable by Lemma~\ref{thm:psiderivativeNEU}, 
	we have $F^\prime(u_i) = 0$ in $H^1(\Omega)^\ast$, 
	which is equivalent to \eqref{eq:thm:nonlinearpde}.\\
	\textbf{Part 2: Showing $\mathbf{u_1,u_2\in L^\infty(\Omega)}$ and an estimate for $\mathbf{\lVert u_1\rVert_{L^\infty(\Omega)}}$ and $\mathbf{\lVert u_2\rVert_{L^\infty(\Omega)}}$}\\
		Fix $M>\gamma_0^{-1} b^0$ and let $u_{i,M} := \min(M, \max(-M, u_i) )$,  $i=1,2$.
		For any $\N \ni q\geq 1$ we have
		\begin{equation*}
			\nabla \bigl( u_{i,M}^{2q-1} \bigr) = (2q-1) \, u_{i}^{2q-2} \, \nabla u_i \cdot 1_{\{ -M < u_i < M \}} \in L^2(\Omega).
			\end{equation*}
		Testing \eqref{eq:thm:nonlinearpde} with $u_{i,M}^{2q-1}$ yields
		\begin{equation*}
			\begin{split}
				\gamma & (u_i, u_{i,M}^{2q-1})_{L^2(\Omega)}\\ & = (p, u_{i,M}^{2q-1})_{L^2(\Omega)} - (2q-1) \int_{\{-M< u_i < M\}} \gamma  u_{i}^{2q-2} |\nabla u_i|^2 + \beta u_{i}^{2q-2} \frac{|\nabla u_i|^2}{\sqrt{\delta+|\nabla u_i|^2}}\dx \\
				& \leq \lVert p \rVert_{L^\infty(\Omega)} \lVert u_{i,M}^{2q-1} \rVert_{L^1(\Omega)} 
				\leq b^0 \lVert 1 \rVert_{L^{2q}(\Omega)} \lVert u_{i,M}^{2q-1} \rVert_{L^{\frac{2q}{2q-1}}(\Omega)} 
				= b^0 |\Omega|^{\frac{1}{2q}} \lVert u_{i,M} \rVert_{L^{2q}(\Omega)}^{2q-1}.
				\end{split}
			\end{equation*}
		In combination with 
		\begin{equation*}
			\begin{split}
				& \gamma (u_i, u_{i,M}^{2q-1})_{L^2(\Omega)}\\ & = \gamma \left( \int_{\{ -M < u_i < M \}} u_{i}^{2q} \dx + \int_{\{ u_i \leq -M \}} u_i (-M)^{2q-1} \dx + \int_{\{ M \leq u_i \}} u_i M^{2q-1} \dx \right) \\
				& \geq \gamma \left( \lVert u_i \rVert_{L^{2q}(\{ -M < u_i < M \})}^{2q} + \int_{ \{ u_i \leq -M \} } (-M)^{2q} \dx + \int_{\{ M \leq u_i \}} M^{2q} \dx \right) \geq \gamma_0 \lVert u_{i,M} \rVert_{L^{2q}(\Omega)}^{2q}
				\end{split}
			\end{equation*}
		this yields 
		$\gamma_0 \lVert u_{i,M} \rVert_{L^{2q}(\Omega)} \leq b^0 |\Omega|^{\frac{1}{2q}}$. Sending $q\to\infty$ gives $\lVert u_{i,M} \rVert_{L^\infty(\Omega)} \leq \gamma_0^{-1} b^0$. As $M>\gamma_0^{-1} b^0$ by assumption we conclude that
	\begin{equation} \label{eq:proof:uiLinfty}
		\lVert u_i \rVert_{L^\infty(\Omega)} \leq \gamma_0^{-1} b^0 \quad\text{ for }i=1,2.
	\end{equation}
	\textbf{Part 3: Obtaining $\mathbf{C^{1,\alpha}}$ regularity of $\mathbf{u_1,u_2}$}\\
	We apply Theorem~\ref{thm:lieberalpha} with $m = 0$, $A(x,u,\eta) = \gamma\eta + {\beta\eta}/{\sqrt{\delta+\lvert\eta\rvert^2}}$, $B(x,u,\eta)=p_i(x)$ for $i=1,2$, $\kappa=0$, identical values for $\alpha^\prime$, $\lambda =\gamma_0$, $\Lambda =\max\{b^0,\gamma^0 N + \delta_0^{-1/2}\beta (N+N^2)\}$ and $M = \gamma_0^{-1} b^0$, cf. \eqref{eq:proof:uiLinfty}. Since $A$ is independent of $(x,u)$ and continuously differentiable, it is easy to see that the requirements of Theorem~\ref{thm:lieberalpha} are met.
	This shows $u_1,u_2\in C^{1,\alpha}({\Omega})$ for some $\alpha>0$ and yields 
	\begin{align} \label{eq:proof:uiC1alpha}
		\lVert u_i \rVert_{C^{1,\alpha}(\Omega)} \leq C,
	\end{align}
	where $C>0$ and $\alpha\in (0,1)$ depend only on $\alpha^\prime$, $\Omega$, $N$, $\Lambda / \lambda = \gamma_0^{-1} \Lambda$ and $M = \gamma_0^{-1} b^0$.\\	
	\textbf{Part 4: Lipschitz continuity of $\mathbf{p\mapsto u(p)}$}\\
	Taking the difference of the weak formulations supplies 
	\begin{equation} \label{eq:proof:localLSofT0}
		\int_\Omega \nabla \varphi^T \Biggl( \gamma \nabla \tilde u + \beta \frac{\nabla u_1}{\sqrt{\delta + \lvert\nabla u_1\rvert^2}} - \beta \frac{\nabla u_2}{\sqrt{\delta + \lvert\nabla u_2\rvert^2}} \Biggr) + \gamma \varphi \tilde u \dx = \int_{\Omega}\varphi\tilde p\dx\;\quad\forall\varphi\in H^1(\Omega),
	\end{equation}
	where we abbreviated $\tilde u:= u_1-u_2$ and $\tilde p:= p_1-p_2$.
	The function $H: \R^N \rightarrow \R$ given by $H(v) := \sqrt{\delta + \lvert v\rvert^2}$ is convex. 
	Let $t\in [0,1]$ and denote by $u^\tau:\Omega\rightarrow\R$ the $C^{1,\alpha}(\Omega)$ function $u^\tau(x) := u_2(x) + \tau \tilde u(x)$. 
	For every $x\in\Omega$ it holds that
	\begin{equation*}
		\begin{split}
			\frac{\nabla u_1(x)}{\sqrt{\delta + \lvert\nabla u_1(x)\rvert^2}} - \frac{\nabla u_2(x)}{\sqrt{\delta + \lvert\nabla u_2(x)\rvert^2}} 
			& = \nabla H\bigl(\nabla u_1(x)\bigr) - \nabla H\bigl(\nabla u_2(x)\bigr)\\
			%& = \int_0^1 \nabla^2 H(\nabla u_2(x) + s \nabla \tilde u(x)) \nabla \tilde u(x)\dtau\\
			& = \int_0^1 \nabla^2 H(\nabla u^\tau(x)) \dtau\;\nabla \tilde u(x),
		\end{split}
	\end{equation*}
	where the integral is understood componentwise.
	Together with \eqref{eq:proof:localLSofT0} we infer that $\tilde u$ satisfies
	\begin{equation*}
		\left\{
		\begin{aligned}
			-\divg\Bigl(\tilde A \nabla \tilde u \Bigr) +\gamma \tilde u & = \tilde p &&\text{ in }\Omega,\\
			\partial_{\nu_{\tilde A}} \tilde u & = 0 &&\text{ on }\Gamma,
		\end{aligned}
		\right.
	\end{equation*}
	where $\tilde A:\Omega\rightarrow\R^{N\times N}$ is given by
	\begin{equation*}
		\tilde A(x):=\gamma I+ \beta \int_0^1 \nabla^2 H(\nabla u^\tau(x)) \dtau. 
	\end{equation*}
		In order to apply Theorem~\ref{thm:linearregularity} to this PDE, we show $\tilde A \in C^{0,\alpha}(\Omega,\R^{N\times N})$.
		The convexity of $H$ implies that $\nabla^2 H$ is positive semi-definite. 
		Thus we find for any $v\in \R^N$ and any $x\in\Omega$
		\begin{equation*}
			v^T \tilde A(x) v \geq \gamma |v|^2 \geq \gamma_0 |v|^2.
			\end{equation*}
		For $x\in\Omega$ and $1\leq i,j\leq N$ it holds that
		\begin{equation*}
			\lvert \tilde A_{ij}(x)\rvert\leq \gamma + \beta \int_0^1 \left\lvert \bigl[\nabla^2 H(\nabla u^\tau(x))\bigr]_{ij}\right\rvert\dtau
			\leq \gamma^0 + \beta \sup_{\tau\in[0,1]} \left\lVert\bigl[\nabla^2 H(\nabla u^\tau)\bigr]_{ij}\right\rVert_{L^\infty(\Omega)}.
			\end{equation*}
		We also have for all $x,y\in\Omega$
		\begin{equation*}
			\begin{split}
				\left\lvert \tilde A_{ij}(x)-\tilde A_{ij}(y)\right\rvert
				& \leq \beta \int_0^1 \left\lvert \Bigl[\nabla^2 H(\nabla u^\tau(x))-\nabla^2 H(\nabla u^\tau(y))\Bigr]_{ij}\right\rvert\dtau\\
				& \leq \beta\sup_{\tau\in[0,1]} \left\lvert \Bigl[\nabla^2 H(\nabla u^\tau(x))-\nabla^2 H(\nabla u^\tau(y))\Bigr]_{ij}\right\rvert\\
				& \leq \beta \sup_{\tau\in[0,1]} \left\lvert \Bigl[\nabla^2 H(\nabla u^\tau)\Bigr]_{ij} \right\rvert_{C^{0,\alpha}(\Omega)} \left\lvert x-y\right\rvert^\alpha,
				\end{split}
			\end{equation*}
		which shows $\lvert \tilde A_{ij}\rvert_{C^{0,\alpha}(\Omega)}\leq 
		\beta \sup_{\tau\in[0,1]} \lvert [\nabla^2 H(u^\tau)]_{ij} \rvert_{C^{0,\alpha}(\Omega)}$. Together, we infer that
		\begin{equation} \label{eq:proof:LScontinuityC1alpha}
			\bigl\lVert \tilde A_{ij}\bigr\rVert_{C^{0,\alpha}(\Omega)}
			\leq 	
			\gamma^0 + 2\beta\sup_{\tau\in[0,1]}\left\lVert \bigl[\nabla^2 H(\nabla u^\tau)\bigr]_{ij} \right\rVert_{C^{0,\alpha}(\Omega)} 
			%\beta \left(\sup_{s\in[0,1]}\left\lVert\bigl[\nabla^2 H(\nabla u^s)\bigr]_{ij}\right\rVert_{L^\infty(\Omega)}
			%+ \sup_{s\in[0,1]}\left\lvert \bigl[\nabla^2 H(\nabla u^s)\bigr]_{ij}\right\rvert_{C^{0,\alpha}(\Omega)}\right)
			\end{equation}
		for all $1\leq i,j\leq N$. From Lemma~\ref{lem_hoeldercompositions} we obtain for every fixed $1\leq i,j\leq N$ 
		\begin{equation*} 
			\begin{split}
				& \left\lVert \bigl[\nabla^2 H(\nabla u^\tau)\bigr]_{ij} \right\rVert_{C^{0,\alpha}(\Omega)} 
				\leq \left\lVert \frac{1}{\sqrt{\delta + \lvert \nabla u^\tau\rvert^2}} \right\rVert_{C^{0,\alpha}(\Omega)} + \enspace \left\lVert \frac{\partial_{x_i} u^\tau \partial_{x_j} u^\tau}{\sqrt{ \delta + \lvert \nabla u^\tau\rvert^2 }^3} \right\rVert_{C^{0,\alpha}(\Omega)}\\
				& \hspace{1cm} \leq C \left( 1 + \Bigl\lVert \sqrt{ \delta + \lvert \nabla u^\tau\rvert^2 } \,\Bigr\rVert_{C^{0,\alpha}(\Omega)} + \bigl\lVert \nabla u^\tau \bigr\rVert_{C^{0,\alpha}(\Omega)}^2 \Bigl\lVert \left(\delta + \lvert \nabla u^\tau\rvert^2\right)^{-\frac32} \Bigr\rVert_{C^{0,\alpha}(\Omega)} \right),
				\end{split}
			\end{equation*}
		where $C$ only depends on $\delta_0$.
		Since $\lVert \nabla u_1 \rVert_{C^{0,\alpha}(\Omega)}, \lVert \nabla u_2 \rVert_{C^{0,\alpha}(\Omega)} \leq C$ by \eqref{eq:proof:uiC1alpha}, there holds $\lVert \nabla u^\tau \rVert_{C^{0,\alpha}(\Omega)}\leq C$ with the same $C>0$. This $C$ only depends on $\alpha^\prime$, $\Omega$, $N$, $\beta$, $b^0$, $\gamma_0$, $\gamma^0$ and $\delta_0$.
		This and Lemma~\ref{lem_hoeldercompositions} show	
		\begin{equation*}
			\begin{split}
				\Bigl\lVert \bigl[\nabla^2 H(\nabla u^\tau)\bigr]_{ij} \Bigr\rVert_{C^{0,\alpha}(\Omega)} 
				& \leq C \Bigl( 1 + \Bigl\lVert \sqrt{ \delta + \lvert \nabla u^\tau\rvert^2 } \,\Bigr\rVert_{C^{0,\alpha}(\Omega)} + \Bigl\lVert \sqrt{ \delta + \lvert \nabla u^\tau\rvert^2 } \,\Bigr\rVert_{C^{0,\alpha}(\Omega)}^3 \Bigr)\\
				& \leq C \Bigl( 1 + \Bigl(\sqrt{\delta}+\left\lVert \nabla u^\tau \right\rVert_{C^{0,\alpha}(\Omega)} \Bigr)^3\Bigr)
				%\leq C \bigl( 1 + \lVert \nabla u^\tau \rVert_{C^{0,\alpha}(\Omega)}^3 \bigr) 
				\leq C,
				\end{split}
			\end{equation*}
		where $C>0$ is independent of $\tau$ and only depends on the quantities stated in the theorem. Hence, with the same $C$ there holds 
		\begin{equation*}
			\sup_{\tau\in[0,1]}
			\left\lVert\bigl[\nabla^2 H(\nabla u^\tau)\bigr]_{ij} \right\rVert_{C^{0,\alpha}(\Omega)} \leq C
			\qquad\forall \, 1\leq i,j\leq N.
			\end{equation*}
		Inserting this into
		\eqref{eq:proof:LScontinuityC1alpha} yields $\tilde A \in C^{0,\alpha}(\Omega,\R^{N\times N})$ with $\lVert \tilde A \rVert_{C^{0,\alpha}(\Omega)} \leq \gamma^0+2\beta C$,
	This implies that Theorem~\ref{thm:linearregularity} is applicable,
	which yields $\lVert \tilde u \rVert_{C^{1,\alpha}(\Omega)} \leq C \lVert \tilde p \rVert_{L^\infty(\Omega)}$
	with a constant $C$ that only depends on the claimed quantities. This proves the asserted Lipschitz continuity. \qed 
\end{proof}

\section{The original problem: Optimality conditions}\label{sec_optcondorigprob}

The first order optimality conditions of \eqref{eq:redProblem} can be obtained by use of \cite{holler}.
The space $W_0^q(\divg;\Omega)$, $q\in[1,\infty)$, that appears in the following is defined in \cite[Definition~10]{holler}.
\begin{theorem}\label{thm:firstorderoptcondoriginalproblem}
	Let $\Omega\subset\R^N$, $N\in\{1,2,3\}$, be a bounded Lipschitz domain and let $r_N=\frac{N}{N-1}$ if $N>1$, respectively, $r_N\in[1,\infty)$ if $N=1$. Then we have: 
	The function $\bar u\in \BV$ is the solution of \eqref{eq:redProblem} iff there is
	\begin{equation*}
	\bar h \in L^\infty(\Omega,\R^N)\cap W_0^{r_N}(\divg;\Omega)
	\end{equation*}
	that satisfies $\lVert\lvert\bar h\rvert\rVert_{L^\infty(\Omega)} \leq \beta$ and $\operatorname{div} \bar h= \bar p$, where $\bar p$ is defined as in section~\ref{sec:reducedproblem}, as well as
	\begin{equation*}
	\begin{aligned}
	\bar h & = \beta \frac{\nabla \bar u_a}{|\nabla \bar u_a|} && ~~~~\mathcal{L}^N \text{-a.e. in } \Omega \setminus \left\{ x: \nabla \bar u_a(x) = 0 \right\}, \\
	T\bar h & = \beta \frac{\bar u^+(x) - \bar u^-(x)}{|\bar u^+(x) - \bar u^-(x)|} \nu_{\bar u} && ~~~~ \mathcal{H}^1 \text{-a.e. in } J_{\bar u},\\
	T\bar h &  = \beta \sigma_{C_{\bar u}} &&  ~~~~ |\nabla \bar u_c| \text{-a.e.}
	\end{aligned}
	\end{equation*}
	Here, the first, second and third equation correspond to the absolutely continuous part, the jump part, respectively, the 
	Cantor part of the vector measure $\nabla \bar u$. 
	Also, $\sigma_{C_{\bar u}}$ is the Radon-Nikodym density of $\nabla \bar u_c$ with respect to $|\nabla \bar u_c|$, 
	cf. e.g. \cite[Theorem 9.1]{brokatemassengl}. Moreover, $\nu_{\bar u}$ is the jump direction of $\bar u$ and $J_{\bar u}$ denotes the discontinuity set of $\bar u$ in the sense of \cite[Definition 3.63]{ambrosio}. Further, $\mathcal{H}^1$ is the Hausdorff measure of $J_{\bar u}$, cf. e.g. \cite[Chapter~4]{Attouch}.
	The operator $T \colon \operatorname{dom}(T) \subset W^{\operatorname{div}, q}(\Omega) \cap L^\infty(\Omega,\R^N) \rightarrow L^1(\Omega,\R^N, |\nabla u|)$ is called the \emph{full trace operator} and is introduced in \cite[Definition 12]{holler}. We emphasize that $\bar h\in \operatorname{dom}(T)$.
\end{theorem}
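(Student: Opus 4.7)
Since $j$ from \eqref{eq:redProblem} is strictly convex by Lemma~\ref{lem:uregjconvex}, $\bar u\in\BV$ solves \eqref{eq:redProblem} iff the Fermat condition $0\in\partial j(\bar u)$ holds, where $\partial$ denotes the convex subdifferential in, say, $L^s(\Omega)$. The plan is therefore to split $j$ into the smooth quadratic part $g(u):=\tfrac12\lVert Su-y_\Omega\rVert_{L^2(\Omega)}^2$ plus the nonsmooth term $\beta|\cdot|_{\BV}$, apply the sum rule of convex analysis, and then invoke the characterization of $\partial|\cdot|_{\BV}$ from \cite{holler} to translate the remaining inclusion into the desired PDE-type system for $\bar h$.

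First I would note that $g$ is continuous on $L^s(\Omega)$ (Lemma~\ref{lem:uregjcont}) and Fréchet differentiable with derivative $g'(\bar u)=S^\ast(S\bar u - y_\Omega)=\bar p$, the latter even belonging to $P\hookrightarrow L^\infty(\Omega)$. Since $g$ is finite and continuous on all of $L^s(\Omega)$ whereas $\beta|\cdot|_{\BV}$ is proper and convex, the Moreau--Rockafellar sum rule applies and yields
\begin{equation*}
	0\in\partial j(\bar u)=\{\bar p\}+\beta\,\partial|\cdot|_{\BV}(\bar u),
\end{equation*}
which is equivalent to $-\bar p\in\beta\,\partial|\cdot|_{\BV}(\bar u)$.

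Next I would invoke \cite{holler}, whose main result provides a complete description of $\partial|\cdot|_{\BV}$ in terms of vector fields $\eta\in L^\infty(\Omega,\R^N)\cap W_0^{r_N}(\divg;\Omega)$: an element $\xi$ belongs to $\partial|\cdot|_{\BV}(\bar u)$ iff there exists such an $\eta$ with $\lVert|\eta|\rVert_{L^\infty}\leq 1$, $-\divg\eta=\xi$, and the three pointwise identities (on the absolutely continuous, jump, and Cantor parts of $\nabla\bar u$, formulated via the full trace operator $T$) appearing in the theorem statement, with $\beta$ replaced by $1$. Setting $\bar h:=\beta\eta$ and applying this characterization to $\xi=-\bar p/\beta$ converts the inclusion $-\bar p\in\beta\,\partial|\cdot|_{\BV}(\bar u)$ into precisely the claimed system: $\lVert|\bar h|\rVert_{L^\infty}\leq\beta$, $\divg\bar h=\bar p$, and the three decomposition identities.

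The main obstacle I anticipate is verifying that the hypotheses of the abstract result from \cite{holler} are genuinely met in our setting. Two points require attention: first, that $\bar p$ is regular enough for the equation $\divg\bar h=\bar p$ to be solvable within $W_0^{r_N}(\divg;\Omega)$—this follows from $\bar p\in P\subset H^2(\Omega)\cap H_0^1(\Omega)\hookrightarrow L^\infty(\Omega)$ together with the Lipschitz regularity of $\Omega$; and second, that $\bar h$ lies in the domain of the full trace operator $T$ so that the identities on $J_{\bar u}$ and on the support of $|\nabla\bar u_c|$ are well-posed, which is part of the domain description $\operatorname{dom}(T)\subset W^{\divg,q}(\Omega)\cap L^\infty(\Omega,\R^N)$ in \cite[Definition~12]{holler}. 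Once these technical compatibilities are checked, the equivalence claimed in the theorem is immediate from the two displayed inclusions above.
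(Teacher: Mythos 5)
Your proposal is correct and follows essentially the same route as the paper: both reduce to the Fermat condition $0\in\partial j(\bar u)$, rewrite it (via the sum rule) as $-\bar p/\beta\in\partial\lvert\cdot\rvert_{\BV}(\bar u)$, and then cite the characterization of the BV-seminorm subdifferential from \cite{holler} (the paper cites \cite[Proposition~8]{holler} explicitly). You merely fill in the justification of the Moreau--Rockafellar sum rule and the domain/regularity checks that the paper leaves implicit.
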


\begin{proof}
	The well-known optimality condition $0\in\partial j(\bar u)$ from convex analysis can be expressed as
	$-\frac{\bar p}{\beta} \in \partial\lvert \bar u\rvert_{\BV}$, so the claim follows from \cite[Proposition~8]{holler}. \qed 
\end{proof}	

\begin{remark} \label{rem_sparsityg}
	Theorem~\ref{thm:firstorderoptcondoriginalproblem} 
	implies the sparsity relation 
	$\{x: \nabla \bar u_a(x)\neq 0\}\subset \{x: |\bar h(x)|=\beta\}$.
	Since $\{x: |\bar h(x)|=\beta\}$ typically has small Lebesgue measure (often: measure $0$), $\bar u$ is usually constant a.e. in large parts (often: all) of $\Omega$; cf. also the example in section~\ref{sec_examplewithexplicitsolution}.
\end{remark}

\section{An example with explicit solution}\label{sec_examplewithexplicitsolution}

Using rotational symmetry we construct an example for \eqref{eq:ocintro} for $N=2$ with an explicit solution.
We let $\calA=-\Delta$ and $c_0\equiv 0$ in the governing PDE. 
We define
$\hat h: [0,\infty) \rightarrow \R$, $\hat h(r):=\frac{\beta}{2} (\cos(\frac{2\pi}{R}r)-1)$ and $\Omega := B_{2R}(0) \setminus \overline{B_R(0)}$, where the parameters $R>0$ and $\beta>0$ are arbitrary.
We introduce the functions
\begin{equation*}
r(x,y):=\sqrt{x^2+y^2},\qquad
\bar h(x,y):=\hat h(r(x,y))\nabla r(x,y)
\quad\text{ and }\quad
\bar u(x,y):= 1_{(R,\frac{3R}{2})}(r(x,y)),
\end{equation*}
all of which are defined on $\Omega$. The problem data is given by 
\begin{equation*}
\bar p := \operatorname{div} \bar h,
\qquad
\bar y := S\bar u
\qquad\text{ and }\qquad
y_\Omega := \Delta\bar p + \bar y.
\end{equation*}
We now show that these quantities satisfy the properties of Theorem~\ref{thm:firstorderoptcondoriginalproblem}. 
By construction $\bar y$ and $\bar p$ are the state and adjoint state associated to $\bar u$ and we have $\bar p = \operatorname{div} \bar h$. We check the properties of $\bar h$. Since $\lvert\nabla r\rvert = 1$ for $(x,y)\in\Omega$,
we obtain $|\bar h(x,y)| = |\hat h(r(x,y))| \leq \frac{\beta}{2} 2 = \beta$. We also see that $\bar h$ is $C^1$ in $\bar\Omega$ and satisfies $\bar h = 0$ on $\partial\Omega$ so that $\bar h \in L^\infty(\Omega,\R^N)\cap W_0^q(\divg;\Omega)$ for any $q\in[1,\infty)$. 
As $\nabla \bar u(x,y) = -\nabla r(x,y) \mathcal{H}^1_{\partial B_{\frac{3R}{2}}(0)}(x,y)$, we find that $\nabla \bar u$ has no Cantor part and no parts that are absolutely continuous with respect to the Lebesgue measure. Thus, the first and third condition on $\bar h$ in Theorem~\ref{thm:firstorderoptcondoriginalproblem} are trivially satisfied. For $(x,y) \in \partial B_{\frac{3R}{2}}(0)=J_{\bar u}$ we have $\bar h(x,y) =-\beta\nabla r(x,y)=-\beta \nu_{\bar u}$ and $\bar u^+(x)=0$, $\bar u^-(x)=1$ for $x\in J_{\bar u}$, hence the second condition on $\bar h$ in Theorem~\ref{thm:firstorderoptcondoriginalproblem} holds. 
Let us confirm that $\bar p$ satisfies the homogeneous Dirichlet boundary conditions. From $\Delta r = r^{-1}$ and $|\nabla r|^2 = 1$ we obtain
\begin{equation*}
\bar p = \operatorname{div} \bar h = \nabla \hat h(r)^T\nabla r + \hat h(r) \Delta r = \hat h^\prime(r) \lvert\nabla r\rvert^2 + r^{-1} \hat h(r) = \hat h^\prime(r) + r^{-1} \hat h(r).
\end{equation*}
Thus, $\bar p$ satisfies the boundary conditions.
Let us confirm that $\bar y$ satisfies the boundary conditions.
The ansatz $\bar y(x,y) = \hat y(r(x,y))$,
with $\hat y:\Omega\rightarrow\R$ to be determined, 
yields
\begin{equation*}
- 1_{(R, 3R/2)}(r) = - \bar u(x,y) = \Delta \bar y(x,y) = \operatorname{div} (\hat y^\prime(r) \nabla r) = \hat y^{\prime\prime}(r) + r^{-1} \hat y^\prime(r).
\end{equation*}
This leads to
\begin{equation*}
\hat y(r) = \begin{cases}
- \frac{r^2}{4} + A\ln(r/(2R)) + B & \text{ if } r\in (R,3R/2),\\
C \ln (r/(2R)) & \text{ if } r\in (3R/2,2R),
\end{cases}
\end{equation*}
and it is straightforward to check that 
$\bar y$ satisfies the boundary conditions and is continuously differentiable for the parameters
\begin{equation*}
A 
= \frac{R^2}{8} \cdot \frac{18\ln(3/4)-5}{\ln(1/4)},
\qquad
B 
= \frac{9R^2}{8} \left( \frac12 - \ln(3/4) \right)
\qquad\text{and}\qquad
C = \frac{R^2}{8} \cdot\frac{18\ln(3/2) - 5}{\ln(1/4)}.
\end{equation*}
All in all, the optimality conditions of Theorem~\ref{thm:firstorderoptcondoriginalproblem} are satisfied.
Moreover, the optimal value in this example is given by 
\begin{equation*}
j(\bar u)=\frac12\lVert \bar y - y_\Omega\rVert_{\LLL}^2 + \beta\lvert\bar u\rvert_{\BV}
= \frac12\lVert \Delta\bar p \rVert_{\LLL}^2 + \beta\lvert\bar u\rvert_{\BV},
\end{equation*}
which for $R=2\pi$ results in 
\begin{equation*}
j(\bar u) = \frac{\beta^2 \pi}{4} \left( 3 \pi^2 + \ln(8) + \frac{15}{4} \Ci(2\pi) - \frac{27}{4} \Ci(4\pi) + 3 \Ci(8\pi) \right) + 6 \pi^2 \beta
\approx 24.85 \beta^2 + 59.22 \beta
\end{equation*}
with $\Ci(t):=-\int_t^\infty \frac{\cos\tau}{\tau}\dtau$.

\begin{acknowledgements}
Dominik Hafemeyer acknowledges support from the graduate program TopMath of the Elite Network of Bavaria and the TopMath Graduate Center of TUM Graduate School at Technische Universit\"at München. He received a scholarship from the Studienstiftung des deutschen Volkes. He receives support from the IGDK Munich-Graz. 
Funded by the Deutsche Forschungsgemeinschaft, grant no 188264188/GRK1754.
\end{acknowledgements}

% Authors must disclose all relationships or interests that 
% could have direct or potential influence or impart bias on 
% the work: 
%
% \section*{Conflict of interest}
%
% The authors declare that they have no conflict of interest.

% BibTeX users please use one of
%\bibliographystyle{spbasic}      % basic style, author-year citations
\bibliographystyle{spmpsci}      % mathematics and physical sciences
\bibliography{references}        % name your BibTeX data base

\end{document}